\newtheorem{thm}{Theorem}[section]
\newtheorem{prop}[thm]{Proposition}
\newtheorem{lem}[thm]{Lemma}
\newtheorem{cor}[thm]{Corollary}
\newtheorem{conjecture}[thm]{Conjecture}
\numberwithin{equation}{section}
\theoremstyle{definition}
\newtheorem{definition}[thm]{Definition}
\newtheorem{remark}[thm]{Remark}
\newtheorem{ex}[thm]{Example}
\newcommand{\im}{\operatorname{im}}
\newcommand{\Db}{{\rm D}^{\rm b}}
\newcommand{\Br}{{\rm Br}}
\newcommand{\NS}{{\rm NS}}
\newcommand{\Pic}{{\rm Pic}}
\newcommand{\ch}{{\rm ch}}
\newcommand{\rk}{{\rm rk}}
\newcommand{\coh}{{\cat{Coh}}}
\newcommand{\Hom}{{\rm Hom}}
\newcommand{\Stab}{{\rm Stab}}
\renewcommand{\dim}{{\rm dim}\,}
\renewcommand{\Re}{{\rm Re}}
\renewcommand{\Im}{{\rm Im}}
\newcommand{\coker}{{\rm coker}}
\newcommand{\id}{{\rm id}}
\newcommand{\dual}{^{\vee}}
\newcommand{\mono}{\hookrightarrow}
\newcommand{\epi}{\twoheadrightarrow}
\newcommand{\mor}[1][]{\xrightarrow{#1}}
\newcommand{\isomor}{\mor[\sim]}
\newcommand{\cat}[1]{\begin{bf}#1\end{bf}}
\newcommand{\Ext}{{\rm Ext}}
\newcommand{\HH}{\mathbb{H}}
\newcommand{\fu}{\Xi}
\newcommand{\cd}[1]{{#1}^\fu}
\newcommand{\cal}{\mathcal}
\newcommand{\kb}{{\cal B}}
\newcommand{\ke}{{\cal E}}
\newcommand{\kf}{{\cal F}}
\newcommand{\kg}{{\cal G}}
\newcommand{\kh}{{\cal H}}
\newcommand{\ki}{{\cal I}}
\newcommand{\kn}{{\cal N}}
\newcommand{\ko}{{\cal O}}
\newcommand{\kp}{{\cal P}}
\newcommand{\kq}{{\cal Q}}
\newcommand{\kt}{{\cal T}}
\newcommand{\NN}{\mathbb{N}}
\newcommand{\ZZ}{\mathbb{Z}}
\newcommand{\QQ}{\mathbb{Q}}
\newcommand{\RR}{\mathbb{R}}
\newcommand{\CC}{\mathbb{C}}
\newcommand{\PP}{\mathbb{P}}
\newcommand{\mc}[1]{{\cal #1}}
\begin{document}

\title{A categorical invariant for cubic threefolds}

\author[M.\ Bernardara, E.\ Macr\`\i, S.\ Mehrotra, and P.\ Stellari]{Marcello Bernardara, Emanuele Macr\`\i, Sukhendu Mehrotra, and Paolo Stellari}

\address{M.B.: Institut de Math\'{e}matiques de Toulouse (IMT), 118 route de Narbonne, F-31062 Toulouse Cedex 9, France}
\email{marcello.bernardara@uni-due.de}

\address{E.M.: Department of Mathematics, University of Utah, 155 South 1400 East, Salt Lake City, UT 84112, USA \& Mathematical Institute, University of Bonn, Endenicher Allee 60, D-53115 Bonn, Germany}
\curraddr{Department of Mathematics, The Ohio State University, 231 W 18th Avenue, Columbus, OH 43210, USA}
\email{macri.6@math.osu.edu}

\address{S.M.: Department of Mathematics, University of Wisconsin--Madison, 480 Lincoln Drive, Madison, WI 53706-1388, USA}\email{mehrotra@math.wisc.edu }

\address{P.S.: Dipartimento di Matematica ``F. Enriques'',
Universit{\`a} degli Studi di Milano, Via Cesare Saldini 50, 20133
Milano, Italy} \email{paolo.stellari@unimi.it}

\keywords{Derived categories, cubic threefolds}

\subjclass[2000]{18E30, 14E08}

\begin{abstract}
We prove a categorical version of the Torelli theorem for cubic threefolds. More precisely, we show that the non-trivial part of a semi-orthogonal decomposition of the derived category of a cubic threefold characterizes its isomorphism class.
\end{abstract}

\maketitle

\section{Introduction}\label{sec:intro}

One of the main ideas in derived category theory, which goes back to Bondal and Orlov, is that the bounded derived category $\Db(X)$ of coherent sheaves on a smooth projective variety $X$ should contain interesting information about the geometry of the variety itself, for example, about its birational properties.

The main problem is that such information is encoded in a rather inexplicit way. A general belief is that part of this information can be obtained by looking at semi-orthogonal decompositions (see Definition \ref{def:semiorth})
\[
\Db(X)=\langle\cat{T}_1,\ldots,\cat{T}_n\rangle,
\]
where the $\cat{T}_i$'s are full triangulated subcategories of $\Db(X)$ satisfying some orthogonality conditions.
In many interesting geometric situations, all the categories $\cat{T}_i$ but one are equivalent to the derived category of a point. In the easiest case of projective spaces, one can obtain decompositions in which all the subcategories $\cat{T}_i$ are of this form.
In general, a non-trivial subcategory is present and carries useful information about $X$. This happens, for example, when $X$ is the intersection of two quadrics of even dimension \cite{BO2}. In that case, the non-trivial subcategory is equivalent to $\Db(C)$, where $C$ is the curve which is the fine moduli space of spinor bundles on the pencil generated by the two quadrics.

The same strategy has been pursued by Kuznetsov in a series of papers to study the derived categories of Fano threefolds.
In particular, an interesting example is when $X$ is a $V_{14}$ Fano threefold, i.e., a smooth complete intersection of $\PP^9$ and the Grassmannian $\mathrm{Gr}(2,6)$ in $\PP^{14}$. These are the Fano threefolds with $\Pic(X)=\ZZ$, index $1$, and genus $8$.
A classical construction shows that there is a correspondence between birational classes of $V_{14}$ Fano threefolds and isomorphism classes of cubic threefolds (i.e., smooth hypersurfaces of degree $3$ in $\PP^4$). Let $X$ be a Fano threefold as above and let $Y$ be the cubic threefold related to $X$ by the previous correspondence.
By \cite{Kuz1}, we have a semi-orthogonal decomposition of $\Db(Y)$ as
\[
\Db(Y)=\langle \cat{T}_Y,\ko_Y,\ko_Y(1)\rangle,
\]
where $\ko_Y(1):=\ko_{\PP^4}(1)|_{Y}$ and $\cat{T}_Y:=\langle\ko_Y,\ko_Y(1)\rangle^\perp$. Moreover, $\cat{T}_Y$ is an example of a Calabi--Yau category of dimension $5/3$: the composition of three copies of the Serre functor $S_{\cat{T}_Y}$ is isomorphic to the shift by $5$.
Furthermore, again by \cite{Kuz1}, the category $\Db(X)$ has a semi-orthogonal decomposition also, whose unique non-trivial part $\cat{T}_X$ is equivalent to $\cat{T}_Y$.
As a consequence, Kuznetsov deduces that $\cat{T}_X$ is a birational invariant for $V_{14}$ Fano threefolds.
A natural question is now whether $\cat{T}_X$ is a ``good'' invariant which characterizes the birational class of $X$.

By the correspondence mentioned above, one can then forget about $X$ and just study the cubic hypersurface $Y$. In particular, the problem of $\cat{T}_X$ being a good invariant translates into the problem of  $\cat{T}_Y$ being a good isomorphism invariant of $Y$.
The main result of this paper (conjectured by Kuznetsov in \cite{Kuz1}) gives a complete answer to this question and can be regarded as the categorical version of the classical Clemens-Griffiths-Tyurin Torelli Theorem for cubic threefolds:

\begin{thm}\label{conj:Kuzne3folds}
Two cubic threefolds $Y_1$ and $Y_2$ are isomorphic if and only if $\cat{T}_{Y_1}$ and $\cat{T}_{Y_2}$ are equivalent triangulated categories.
\end{thm}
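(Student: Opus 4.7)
The easy direction is immediate: any isomorphism $f\colon Y_1\to Y_2$ induces an equivalence $\Db(Y_2)\iso\Db(Y_1)$ which sends the exceptional pair $(\ko_{Y_2},\ko_{Y_2}(1))$ to $(\ko_{Y_1},\ko_{Y_1}(1))$ and hence restricts to an equivalence $\cat{T}_{Y_2}\iso\cat{T}_{Y_1}$.

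For the non-trivial direction, the strategy is to reconstruct from $\cat{T}_Y$ a distinguished family of objects parameterised by the Fano surface of lines $F(Y)$ of $Y$, and then to apply the classical Clemens--Griffiths--Tyurin Torelli theorem, according to which $F(Y)$ (equivalently, the principally polarised intermediate Jacobian of $Y$) determines $Y$ up to isomorphism. For each line $\ell\subset Y$ the natural object to consider is $F_\ell\in\cat{T}_Y$, obtained by projecting the ideal sheaf $\ki_\ell$ via the left adjoint to the inclusion $\cat{T}_Y\hookrightarrow\Db(Y)$ --- concretely, by mutating $\ki_\ell$ through the exceptional sequence $\ko_Y,\ko_Y(1)$. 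A Beilinson-type resolution together with Riemann--Roch computes its Chern character and shows that $\ell\mapsto F_\ell$ is injective; moreover $F_\ell$ is simple, with $\dim\Ext^1(F_\ell,F_\ell)=2$ matching $\dim F(Y)$, and its higher $\Ext$-groups are controlled by Serre duality for $S_{\cat{T}_Y}$, which satisfies the fractional Calabi--Yau relation $S_{\cat{T}_Y}^{\,3}\iso[5]$.

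The crucial step is then to characterise the family $\{F_\ell\}_{\ell\in F(Y)}$ via properties intrinsic to the triangulated category $\cat{T}_Y$. The most natural candidates involve the exact $\Ext$-pattern computed in the previous paragraph, a minimality/indecomposability condition, and the behaviour of $F_\ell$ under the Serre functor and its powers (subject to the fractional Calabi--Yau constraint). Since the Serre functor, the shift, and all $\Ext$-groups are preserved by every triangulated equivalence, an equivalence $\Ff\colon\cat{T}_{Y_1}\to\cat{T}_{Y_2}$ must take the class of $F_\ell$-objects on one side to the analogous class on the other. To promote this set-theoretic bijection to an isomorphism of moduli schemes, one then compares the deformation theory of $F_\ell$ inside $\cat{T}_Y$ (controlled by $\Ext^1$ and obstructed by $\Ext^2$) with that of lines in $Y$, obtaining $F(Y_1)\iso F(Y_2)$, whence $Y_1\iso Y_2$ by the classical Torelli theorem.

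The main obstacle lies in this intrinsic characterisation of the line-objects and in upgrading the induced bijection to an algebraic isomorphism of the Fano surfaces. One must rule out spurious objects in $\cat{T}_Y$ with the same numerical shadow --- for instance shifted or twisted line-objects, projections of skyscraper sheaves of points, or rank-two bundles with similar cohomological invariants --- and tame the ambiguity up to shift, which a pure triangulated equivalence cannot see. A natural strategy is either to construct a Bridgeland-type stability condition on $\cat{T}_Y$ for which the $F_\ell$ are exactly the stable objects of the relevant numerical class, or to work directly with a Fourier--Mukai kernel representing $\Ff$ and track the family $\{F_\ell\}_{\ell\in F(Y_1)}$ through it to produce an algebraic morphism $F(Y_1)\to F(Y_2)$.
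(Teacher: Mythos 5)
Your outline reproduces the paper's overall strategy --- realize the lines of $Y$ as a moduli space of objects of $\cat{T}_Y$ singled out by a Bridgeland stability condition, and then invoke the classical Clemens--Griffiths Torelli theorem --- but the two points you flag as ``the main obstacle'' are precisely the content of the proof, and the proposal leaves both open. Constructing a stability condition on $\cat{T}_Y$ is not routine, since $\cat{T}_Y$ has no evident heart: the paper blows up a line $l_0\subseteq Y$, uses Kuznetsov's quadric-fibration theorem to embed $\cat{T}_Y$ into $\Db(\PP^2,\kb_0)$ for $\kb_0$ the even Clifford algebra of the resulting conic fibration, defines a slope function on torsion-free $\kb_0$-modules, tilts, and checks that the tilted heart has Ext-dimension $2$, induces a bounded $t$-structure on $\cat{T}_Y$, and contains all ideal sheaves of lines (including $I_{l_0}$, which becomes a two-term complex). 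Without this, the assertion that the line-objects ``are exactly the stable objects of the relevant numerical class'' has no content: proving that a stable object of class $[I_l]$ must actually be an ideal sheaf of a line is a delicate sheaf-theoretic argument inside $\coh(\PP^2,\kb_0)$ (Section 4.4 of the paper), not a formal consequence of $\chi(C,C)=-1$ and $\dim\Ext^1(C,C)=2$. (A minor point: $I_l$ already lies in $\cat{T}_Y$, so no mutation is needed to define your $F_\ell$.)

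The second gap is the algebraization of the bijection. Your fallback of ``working directly with a Fourier--Mukai kernel representing $\Ff$'' is not available: it is not known that an exact equivalence $\cat{T}_{Y_1}\iso\cat{T}_{Y_2}$ between these non-geometric admissible subcategories is of Fourier--Mukai type, so one cannot simply push the universal family through a kernel. The paper instead resolves the universal ideal sheaf on $Y'\times F(Y')$ by box products of line bundles, applies the composite functor $\Db(Y')\to\Db(Y)$ term by term, and uses existence and uniqueness of convolutions of the resulting complex to assemble a relatively perfect family on $Y\times F(Y')$ whose fibres are the images of the $I_{l'}$; only this produces the classifying morphism $F(Y')\to F(Y)$. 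As it stands, your argument yields at best a set-theoretic bijection of lines, which does not suffice to apply the Torelli theorem.
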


Since the Picard group of a cubic threefold is free of rank one, an isomorphism $Y_1\cong Y_2$ clearly implies that $\cat{T}_{Y_1}\cong\cat{T}_{Y_2}$.
To prove the non-trivial implication of Theorem \ref{conj:Kuzne3folds}, we follow an idea of Kuznetsov \cite{Kuz1}, which can be roughly summarized as follows.
The ideal sheaves $I_l$ of lines $l$ in $Y$ are all inside the category $\cat{T}_Y$.
We define a stability condition on $\cat{T}_Y$ in such a way that any stable object in $\cat{T}_Y$ numerically equivalent to $I_l$ is actually isomorphic to some $I_{l'}$.
The Fano surface of lines $F(Y)$ of $Y$ is thus realized as a moduli space of stable objects in $\cat{T}_Y$ and it is possible to reconstruct the intermediate Jacobian $J(Y)$ of $Y$ from $\cat{T}_Y$ (being $J(Y)$ the Albanese variety of $F(Y)$).
Theorem \ref{conj:Kuzne3folds} follows from this as a consequence of the Torelli Theorem for cubic threefolds \cite{CG}.

\medskip

Theorem \ref{conj:Kuzne3folds} can be also interpreted as supporting evidence for a conjecture of Kuznetsov about the rationality of cubic fourfolds (i.e., smooth hypersurfaces of degree $3$ in $\PP^5$). If $Y$ is such a fourfold, the results of \cite{Kuz1} yield another semi-orthogonal decomposition
\[
\Db(Y)=\langle \cat{T}_Y,\ko_Y,\ko_Y(1),\ko_Y(2)\rangle,
\]
where $\cat{T}_Y$ is a Calabi--Yau category of dimension $2$. Kuznetsov conjectures in \cite{Kuz3} that this category encodes a fundamental geometric property of the fourfold:

\begin{conjecture}\label{conj:Kuzne4folds} {\bf (Kuznetsov)}
A cubic fourfold $Y$ is rational if and only if $\cat{T}_Y$ is equivalent to the derived category of a K3 surface.
\end{conjecture}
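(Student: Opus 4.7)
The plan is to handle the two implications of the conjecture by very different means. For the direction ``$Y$ rational $\Rightarrow \cat{T}_Y \simeq \Db(S)$'' my approach would be to study the behaviour of semi-orthogonal decompositions under birational maps: any birational $Y \dashrightarrow \PP^4$ can be resolved by a sequence of blow-ups along smooth centres, and by Orlov's blow-up formula each such operation inserts copies of derived categories of the centres into the semi-orthogonal decompositions of the common resolution. One would then try to track the $2$-Calabi--Yau piece $\cat{T}_Y$ through this zig-zag, using the Calabi--Yau property together with invariants such as Hochschild (co)homology and the Serre functor to constrain the possible shapes of the pieces. Since $\cat{T}_Y$ has the additive invariants of a K3 surface and the other admissible subcategories arising on both sides of the resolution are derived categories of points, curves, and surfaces, one should be able to conclude that $\cat{T}_Y$ is forced to be $\Db(S)$ for a K3 surface $S$.

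For the converse direction the strategy is far more speculative. Combining Hodge-theoretic considerations with categorical input, the hypothesis $\cat{T}_Y \simeq \Db(S)$ should force the transcendental lattice of the primitive middle cohomology of $Y$ to be Hodge-isometric to that of a K3 surface, which in turn singles out a countable union of divisors in the moduli space of cubic fourfolds corresponding to admissible discriminants. The plan is then, divisor by divisor, to produce a rational parametrisation of $Y$ from the geometry of the associated K3. In analogy with the proof of Theorem \ref{conj:Kuzne3folds} sketched above, this would proceed by constructing Bridgeland stability conditions on $\cat{T}_Y$, building moduli spaces of stable objects whose numerical class corresponds to a geometrically meaningful family of sheaves on $S$, and using these moduli spaces to realise $Y$ as birational to a projective bundle or Grassmannian fibration over a rational base.

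The hard part, and the reason this remains a conjecture, is the second implication: passing from an abstract categorical equivalence to an actual rational parametrisation of $Y$. Even in the lowest admissible discriminant case (the Pfaffian locus) rationality is established only by a delicate explicit construction, and no uniform geometric procedure is presently available for higher discriminants. The first implication also presents a serious obstacle: concrete birational maps from $Y$ to $\PP^4$ typically involve blow-ups of singular and non-reduced loci, so the derived-categorical bookkeeping must be carried out on a carefully chosen common smooth resolution, and one must verify that the absorption of the extra pieces into exceptional objects goes through in full generality. I expect that an honest proof would require genuinely new tools beyond those in the present paper, most plausibly a relative theory of stability conditions on $\cat{T}_Y$ over the moduli space of cubic fourfolds, coupled with a deformation-theoretic argument that ties rationality to the existence of an untwisted K3 in the fibre.
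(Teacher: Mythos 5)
The statement you are addressing is Conjecture \ref{conj:Kuzne4folds}, which the paper explicitly attributes to Kuznetsov and does \emph{not} prove: it appears only as motivation, the supporting evidence being the verification in \cite{Kuz3} for Pfaffian cubics, Hassett's cubics containing a plane, and singular cubics, together with the threefold analogue (Theorem \ref{conj:Kuzne3folds}) proved in the body of the paper. Your proposal is, by your own admission, a research programme rather than a proof, and both halves contain genuine gaps. For the direction ``rational $\Rightarrow$ K3'': resolving a birational map $Y\dasharrow\PP^4$ by weak factorization does produce a zig-zag of blow-ups with smooth centres, and Orlov's formula does insert derived categories of those centres, but there is no cancellation theorem for semi-orthogonal decompositions that lets you ``track'' the $2$-Calabi--Yau piece through the zig-zag: two semi-orthogonal decompositions of the same category need not admit a common refinement, and the assertion that the residual component is a well-defined birational invariant is itself an open conjecture (this is exactly Kuznetsov's Griffiths-component proposal alluded to in the introduction). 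Additive invariants such as Hochschild homology constrain ranks but cannot upgrade ``has the invariants of a K3 surface'' to ``is equivalent to $\Db(S)$ for an actual K3 surface $S$.''

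For the converse the gap is wider still. An equivalence $\cat{T}_Y\simeq\Db(S)$ does yield, by arguments of the kind used in Section \ref{subsec:example4folds}, Hodge-theoretic restrictions on $H^4(Y,\ZZ)$ placing $Y$ on a countable union of divisors in moduli, but no general mechanism is known for converting this---or the existence of moduli spaces of Bridgeland-stable objects in $\cat{T}_Y$---into a birational parametrisation of $Y$. The analogy with Theorem \ref{conj:Kuzne3folds} does not transfer: there the moduli-space construction recovers the Fano surface of lines and hence the intermediate Jacobian, for which a Torelli theorem is available and which detects (ir)rationality by Clemens--Griffiths; in the fourfold case the analogous output is at best a hyperk\"ahler moduli space of objects on $S$, and the passage from such a space to a rational parametrisation of $Y$ is precisely the unresolved content of the conjecture, settled only by ad hoc constructions in the known cases. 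In short, you have correctly diagnosed where the difficulties lie, but what you have written is a description of the obstacles, not a proof, and the paper itself offers no proof to compare it against.
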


This conjecture can be regarded as a categorical counterpart in higher dimension of the construction of the intermediate Jacobian for a threefold.
Indeed, according to \cite{CG}, the intermediate Jacobian becomes a birational invariant once one forgets all its irreducible factors coming from curves (and points).
The category $\cat{T}_Y$ should have precisely the same flavor: after discarding the part coming from points, curves and surfaces, what remains is expected to be a birational invariant \cite{Kuz3}. Indeed, in the rational case, the category $\cat{T}_Y$ is not a birational invariant (see, for example, Proposition \ref{lem:rat}), while the expected birational invariant is trivial.

In the threefold case, Theorem \ref{conj:Kuzne3folds} fits this picture: it can be interpreted by saying that the category $\cat{T}_Y$ carries precisely the same information as the intermediate Jacobian of $Y$.

The main evidence for Conjecture \ref{conj:Kuzne4folds} is that, in \cite{Kuz3}, it has been verified for all the basic examples of rational cubic fourfolds: the Pfaffian cubics, the cubic fourfolds containing a plane described in \cite{H1} and the singular cubics. If the previous conjecture is true, then it would follow from the calculations in the appendix of \cite{Kuz3} that, generically, cubic fourfolds are not rational. In Section \ref{sec:example4folds} we will clarify how an analogue of Theorem \ref{conj:Kuzne3folds} can be stated for cubic fourfolds containing a plane.

\medskip

The plan of the paper is as follows. Sections \ref{sec:prelim} and \ref{sec:stab-cond} are mainly devoted to the construction of a bounded $t$-structure and a stability condition on $\cat{T}_Y$. This is the first step toward the construction of a suitable stability condition on this category. Many ingredients will play a role in these two sections, among which are Kuznetsov's results about quadric fibrations, the description of the numerical Grothendieck group of $\cat{T}_Y$, and the definition of a slope-stability for sheaves on the projective plane which are also modules over a certain algebra.

The definition is quite natural but what requires much more work is to show that the ideal sheaves of lines in $Y$ (which are all objects of $\cat{T}_Y$) are all stable in this stability condition and that they are (up to even shifts) the only stable objects in their numerical class. This is achieved in Section \ref{sect:stabid}. In Section \ref{sect:finalproof} we then finish the proof of Theorem \ref{conj:Kuzne3folds}. To this end, in Section \ref{subsect:Fanoprelim}, we use the stability results to first show that any equivalence $\cat{T}_Y\cong\cat{T}_{Y'}$ yields a bijection between the sets of ideal sheaves of lines (and so between the Fano surfaces of $Y$ and $Y'$). A rather technical argument involving the homological nature of the equivalence allows us to observe that this bijection is a morphism (Section \ref{subsect:convuniv}).
The result then follows from the Torelli theorem for cubic threefolds (Section \ref{subsect:isomor}).

We conclude the paper in Section \ref{sec:example4folds} by studying the case of cubic fourfolds with a plane. The analogies with Theorem \ref{conj:Kuzne3folds} are examined.

\bigskip

\noindent{\bf Notation.} In this paper all varieties are defined over the complex numbers $\CC$ and all triangulated categories are assumed to be essentially small (i.e., equivalent to a small category) and linear over $\CC$ (i.e., all Hom-spaces are $\CC$-vector spaces).
For a variety $X$, $\Db(X):=\Db(\coh(X))$ is the bounded derived category of coherent sheaves on $X$.
All derived functors will be denoted as if they were underived, e.g.\ for a morphism of varieties $f:X\to Y$, we will denote $f^*$ for the derived pull-back, $f_*$ for the derived push-forward, and so on.
For a complex number $z\in\CC$, we will write $\Re(z)$, resp.\ $\Im(z)$, for the real, resp.\ imaginary, part of $z$.

\section{Preliminaries}\label{sec:prelim}

In this section we realize the category $\cat{T}_Y$, for $Y$ a cubic threefold, as a full subcategory of the derived category $\Db(\PP^2,\kb_0)$ of sheaves on $\PP^2$ with an action of an algebra $\kb_0$. We describe the numerical Grothendieck groups of $\cat{T}_Y$ and of $\Db(\PP^2,\kb_0)$, and a notion of $\mu$-stability for $\kb_0$-sheaves on $\PP^2$. The construction of a stability condition on $\cat{T}_Y$ in Section \ref{sec:stab-cond} will rely on this realization.

\subsection{Kuznetsov's theorem on quadric fibrations}\label{subsec:KuzneQuadrics}

We start by recalling the notion of semi-orthogonal decomposition (see \cite{Bondal}).
Let $\cat{D}$ be a triangulated category.

\begin{definition}\label{def:semiorth}
A \emph{semi-orthogonal} decomposition of $\cat{D}$ is a sequence of full triangulated subcategories $\cat{T}_1,\ldots,\cat{T}_n\subseteq\cat{D}$ such that $\Hom_{\cat{D}}(\cat{T}_i,\cat{T}_j)=0$, for $i>j$ and, for all $K\in\cat{D}$, there exists a chain of morphisms in $\cat{D}$
\[
0=K_n\to K_{n-1}\to\ldots\to K_1\to K_0=K
\]
with $\mathrm{cone}(K_i\to K_{i-1})\in\cat{T}_i$, for all $i=1,\ldots,n$.
We will denote such a decomposition by $\cat{D}=\langle\cat{T}_1,\ldots,\cat{T}_n\rangle$.
\end{definition}

The easiest examples of semi-orthogonal decompositions are constructed via exceptional objects.

\begin{definition}\label{def:excobjgeneral}
An object $E\in\cat{D}$ is called \emph{exceptional} if $\Hom_{\cat{D}}(E,E)\cong\CC$ and $\Hom_{\cat{D}}^p(E,E)=0$, for all $p\neq0$.
A collection $\{E_1,\ldots,E_m\}$ of objects in $\cat{D}$ is called an \emph{exceptional collection} if $E_i$ is an exceptional object, for all $i$, and $\Hom_{\cat{D}}^p(E_i,E_j)=0$, for all $p$ and all $i>j$.
\end{definition}

If the category $\cat{D}$ has good properties (for example if it is equivalent to the bounded derived category of coherent sheaves on a smooth projective variety) and $\{E_1,\ldots,E_m\}$ is an exceptional collection in $\cat{D}$, then we have a semi-orthogonal decomposition
\[
\cat{D}=\langle\cat{T},E_1,\ldots,E_m\rangle,
\]
where, by abuse of notation, we denoted by $E_i$ the triangulated subcategory generated by $E_i$ (equivalent to the bounded derived category of finite dimensional vector spaces) and
\[
\cat{T}:=\langle E_1,\ldots,E_m\rangle^\perp=\left\{K\in\cat{T}\,:\,\Hom^p(E_i,K)=0,\text{ for all }p\text{ and }i\right\}.
\]

\bigskip

Let $Y$ be a smooth hypersurface in $\PP^4$ defined by a polynomial of degree 3 and let $\Db(Y)$ be its bounded derived category of coherent sheaves.
Define $\ko_Y(1):=\ko_{\PP^4}(1)|_{Y}$ and $\cat{T}_Y:=\langle\ko_Y,\ko_Y(1)\rangle^\perp$.
Since the collection $\{\ko_Y,\ko_Y(1)\}$ is exceptional in $\Db(Y)$, we have a semi-orthogonal decomposition of $\Db(Y)$ as
\[
\Db(Y)=\langle\cat{T}_Y,\ko_Y,\ko_Y(1)\rangle.
\]

\begin{remark}
As Kuznetsov observed in \cite{Kuz1}, the sheaves of ideals $I_l$ of lines $l$ in $Y$ are all inside the category $\cat{T}_Y$.
Moreover, although this will not be used in this paper, all instanton bundles (and their twist by $\ko_Y(1)$) are in $\cat{T}_Y$ as well.
\end{remark}

Fix a line $l_0$ inside $Y$. Consider the blow-up $\widetilde{\PP^4}$ of $\PP^4$ along $l_0$ and $q: \widetilde{\PP^4} \to \PP^2$ the $\PP^2$-bundle induced by the projection from $l_0$ onto a plane. Let $\widetilde{Y}$ denote the strict transform of $Y$ via this blow-up. The restriction of $q$ to $\widetilde{Y}$ induces a conic fibration $\pi: \widetilde{Y} \to \PP^2$, so that we have the following diagram:
\[
\xymatrix{\PP^4 && \widetilde{\PP^4}={\mathrm{Bl}}_{l_0} \PP^4 \ar[ll] \ar[rrd]^{q} &&\\
Y \ar@{^{(}->}[u] && \widetilde{Y}={\mathrm{Bl}}_{l_0} Y \ar[ll]_{\sigma} \ar[rr]^{\pi} \ar@{^{(}->}[u]^{\alpha} && \PP^2.
}
\]
We let $D \subset \widetilde{Y}$ be the exceptional divisor of the blow-up $\sigma: \widetilde{Y} \to Y$.
By abuse of notation, we denote by $h$ both the class of a line in $\PP^2$ and its pull-backs to $\widetilde{\PP^4}$ and $\widetilde{Y}$. In the same way, we denote by $H$ both the class of a hyperplane in $\PP^4$ and its pull-backs to $\widetilde{\PP}^4$ and $\widetilde{Y}$.
For later use, we notice that the calculation in \cite[Lemma 4.1]{Kuz3}, adapted to the cubic threefolds case, yields $\ko_{\widetilde{Y}}(D)\cong\ko_{\widetilde{Y}}(H-h)$, $\widetilde{\PP}^4 \cong \PP_{\PP^2} (\ko_{\PP^2}^{\oplus 2} \oplus \ko_{\PP^2}(-h))$, the relative ample line bundle is $\ko_{\widetilde{\PP}^4}(H)$, and the relative canonical bundle is $\ko_{\widetilde{\PP}^4}(h-3H)$.

Let $\kb_0$ (resp.\ $\kb_1$) be the sheaf of even (resp.\ odd) parts of the Clifford algebra corresponding to $\pi$, as in \cite[Sect.\ 3]{Kuz2}.
Explicitly, in our case,
\begin{align}\label{bcoh}
\kb_0&\cong\ko_{\PP^2}\oplus\ko_{\PP^2}(-h)\oplus\ko_{\PP^2}(-2h)^{\oplus 2}\\
\kb_1&\cong\ko_{\PP^2}^{\oplus 2}\oplus\ko_{\PP^2}(-h)\oplus\ko_{\PP^2}(-2h),\nonumber
\end{align}
as sheaves of $\ko_{\PP^2}$-modules. Denote by $\coh(\PP^2,\kb_0)$ the abelian category of right coherent $\kb_0$-modules and by $\Db(\PP^2,\kb_0)$ its bounded derived category.

By \cite[Sect.\ 4]{Kuz2}, we can define a fully faithful functor $\Phi:=\Phi_{\ke'}:\Db(\PP^2,\kb_0)\to\Db(\widetilde{Y})$, $\Phi_{\ke'}(M):=\pi^*M\otimes_{\pi^*\kb_0}\ke'$, for all $M\in\Db(\PP^2,\kb_0)$, where $\ke'\in\coh(\widetilde{Y})$ is a rank $2$ vector bundle on $\widetilde{Y}$ with a natural structure of flat left $\pi^*\kb_0$-module.
We will not need the actual definition of $\ke'$ (for which the reader is referred to \cite[Sect.\ 4]{Kuz2}) but only the presentation
\begin{equation}\label{eq:defofkeprime}
0\to q^*\kb_0(-2H)\to q^*\kb_1(-H)\to\alpha_*\ke'\to0.
\end{equation}

The left and right adjoint functors of $\Phi$ are respectively
\begin{align}\label{bascoh}
\Psi(-)&:=\pi_*((-)\otimes\ko_{\widetilde{Y}}(h)\otimes\ke[1]),\\
\Pi(-)&:=\pi_*(\kh om_{\widetilde{Y}}(\ke',-)),\nonumber
\end{align}
where $\ke\in\coh(\widetilde{Y})$ is another rank $2$ vector bundle on $\widetilde{Y}$ with a natural structure of right $\pi^*\kb_0$-module (see \cite[Sect.\ 4]{Kuz2}). The main property we will need is the presentation
\begin{equation}\label{eq:defofke}
0\to q^*\kb_1(-h-2H)\to q^*\kb_0(-H)\to\alpha_*\ke\to0.
\end{equation}

\medskip

The embedding $\Phi$ has the remarkable property that, by \cite[Thm.\ 4.2]{Kuz2}, it gives a semi-orthogonal decomposition of $\Db(\widetilde{Y})$ as
\begin{equation}\label{eq:1}
\Db(\widetilde{Y})=\langle\Phi(\Db(\PP^2,\kb_0)),\ko_{\widetilde{Y}}(-h),\ko_{\widetilde{Y}},\ko_{\widetilde{Y}}(h)\rangle.
\end{equation}

On the other hand, a well-known result of Orlov \cite{Orlov} tells us that
\begin{equation}\label{eq:2}
\Db(\widetilde{Y})=\langle\sigma^*\cat{T}_Y,\ko_{\widetilde{Y}},\ko_{\widetilde{Y}}(H),\ko_D,\ko_D(H)\rangle,
\end{equation}
where $\sigma^*:\Db(Y)\to\Db(\widetilde{Y})$ is fully faithful.

We now perform some mutations to compare $\cat{T}_Y$ and $\Db(\PP^2,\kb_0)$, by mimicking \cite[Sect.\ 4]{Kuz3}.
The aim of that is to identify $\cat{T}_Y$ with an admissible subcategory of $\Db(\PP^2,\kb_0)$. This will allow
us to use the properties of sheaves of $\kb_0$-algebras on $\PP^2$ to describe a stability condition on $\cat{T}_Y$.
We first recall some basics about mutations (see \cite{Bondal}).

Let $E$ be an exceptional object in a triangulated category $\cat{D}$ with good properties as before (for us, $\Db(\widetilde{Y})$).
Consider the two functors, respectively \emph{left and right mutation}, $L_E,R_E:\cat{D}\to\cat{D}$ defined by
\begin{align*}
L_E(M)&:=\mathrm{cone}\left(ev:\mathrm{RHom}(E,M)\otimes E\to M\right)\\
R_E(M)&:=\mathrm{cone}\left(ev^\vee:M\to\mathrm{RHom}(M,E)^\vee\otimes E\right)[-1].
\end{align*}
The main property of mutations is that, given a semi-orthogonal decomposition of $\cat{D}$
\[
\langle\cat{T}_1,\ldots,\cat{T}_k,E,\cat{T}_{k+1},\ldots,\cat{T}_n\rangle,
\]
we can produce two new semi-orthogonal decompositions
\[
\langle\cat{T}_1,\ldots,\cat{T}_k,L_E(\cat{T}_{k+1}),E,\cat{T}_{k+2},\ldots,\cat{T}_n\rangle
\]
and
\[
\langle\cat{T}_1,\ldots,\cat{T}_{k-1},E,R_E(\cat{T}_k),\cat{T}_{k+1},\ldots,\cat{T}_n\rangle.
\]

Coming back to $\Db(\widetilde{Y})$, we first mutate the pair $\left(\Phi(\Db(\PP^2,\kb_0)),\ko_{\widetilde{Y}}(-h)\right)$ in \eqref{eq:1}, to obtain a new pair $\left(\ko_{\widetilde{Y}}(-h),\Phi'(\Db(\PP^2,\kb_0))\right)$, with $\Phi':=R_{\ko_{\widetilde{Y}}(-h)}\circ\Phi$.
Then, by \cite[Lem.\ 2.11]{Kuz3}, using the fact that the canonical bundle of $\widetilde{Y}$ is $\ko_{\widetilde{Y}}(-H-h)$, we obtain a new semi-orthogonal decomposition
\begin{equation}\label{eq:3}
\Db(\widetilde{Y})=\langle\Phi'(\Db(\PP^2,\kb_0)),\ko_{\widetilde{Y}},\ko_{\widetilde{Y}}(h),\ko_{\widetilde{Y}}(H)\rangle.
\end{equation}
On the other side, it is easy to see that the pair $\left(\ko_{\widetilde{Y}}(H),\ko_D\right)$ is completely orthogonal and that the left mutation of the pair $\left(\ko_{\widetilde{Y}}(mH),\ko_D(mH)\right)$ is $\left(\ko_{\widetilde{Y}}(h+(m-1)H),\ko_{\widetilde{Y}}(mH)\right)$ for all integers $m$.
Hence, from \eqref{eq:2}, we obtain a new semi-orthogonal decomposition
\begin{equation}\label{eq:4}
\Db(\widetilde{Y})=\langle\sigma^*\cat{T}_Y,\ko_{\widetilde{Y}}(h-H),\ko_{\widetilde{Y}},\ko_{\widetilde{Y}}(h),\ko_{\widetilde{Y}}(H)\rangle.
\end{equation}
By comparing the two semi-orthogonal decompositions \eqref{eq:3} and \eqref{eq:4}, we obtain a semi-orthogonal decomposition
\begin{equation}\label{eqn:last}
\Phi'(\Db(\PP^2,\kb_0))=\langle\sigma^*\cat{T}_Y,\ko_{\widetilde{Y}}(h-H)\rangle.
\end{equation}

We finally get an equivalence between $\cat{T}_Y$ and an admissible subcategory of $\Db(\PP^2,\kb_0)$. We can now
consider objects of $\cat{T}_Y$ as complexes of $\kb_0$-algebras on $\PP^2$ through the fully faithful functor
\[
\fu:=(\sigma_* \circ \Phi')^{-1}:\cat{T}_Y\longrightarrow\Db(\PP^2,\kb_0).
\]
Nevertheless, it will be convenient to specify the ambient category. Thus,
if $A$ is an object of $\cat{T}_Y$, we set
\[
\cd{A}:=\fu(A)\in\Db(\PP^2,\kb_0).
\]

\begin{ex}\label{ex:idealsheaves}
As an illustration of the previous procedure, we compute the image $\cd{I_l}$ in $\Db(\PP^2,\kb_0)$ of an ideal sheaf
$I_l$ of a line $l$ in $Y$ which does not intersect $l_0$.
The preliminary step is the following easy computation:
\begin{equation}\label{eq:interestingformula}
\Psi(\ko_{\widetilde{Y}}(mh))=0,
\end{equation}
for all integers $m$.
Indeed, by the projection formula, \eqref{eq:defofke}, and the fact that $\pi = q \circ \alpha$,
\begin{equation*}
\begin{split}
\Psi(\ko_{\widetilde{Y}}(mh))&\cong\Psi(\alpha^*(\ko_{\widetilde{\PP^4}}(mh)))\\
               &\cong q_*(\ko_{\widetilde{\PP^4}}((m+1)h)\otimes\alpha_*\ke[1])=0,
\end{split}
\end{equation*}
for all $m$. Here we are using the fact that $\widetilde{\PP^4}\to\PP^2$ is a projective bundle and $\ko_{\widetilde{\PP^4}/\PP^2}(1)\cong\ko_{\widetilde{\PP^4}}(H)$.

Now, to compute $\cd{I_l}=(\sigma_*\circ\Phi')^{-1}(I_l)$, we first notice that, by \eqref{eq:interestingformula}, the mutation by $\ko_{\widetilde{Y}}(-h)$ has no effect.
More precisely, $\cd{I_l}\cong\Psi(\sigma^*I_l)$.
Now the rational map $Y\dashrightarrow\PP^2$ is well-defined on $l$ and maps it to another line; denote by $j$ the embedding $l\mono\widetilde{Y}\mor[\pi]\PP^2$.
Pulling back the exact sequence
\[
0\to I_l\to\ko_Y\to\ko_l\to0,
\]
we have another exact sequence
\[
0\to\sigma^*I_l\to\ko_{\widetilde{Y}}\to\sigma^*\ko_l=\ko_l\to0.
\]
Again, by \eqref{eq:interestingformula}, we have $\Psi(\ko_{\widetilde{Y}})=0$ and so
\begin{equation*}
\begin{split}
\cd{I_l}\cong\Psi(\sigma^*I_l)&\cong\Psi(\ko_l)[-1]\\
    &=\pi_*(\ko_l[-1]\otimes\ko_{\widetilde{Y}}(h)\otimes\ke[1]))\\
    &\cong j_*(j^*\ko_{\widetilde{Y}}(h)\otimes\ke |_l)\\
    &\cong j_*(\ke |_l)\otimes\ko_{\PP^2}(h).
\end{split}
\end{equation*}
\end{ex}

\subsection{Basic properties}\label{subsec:basic}

In order to define a stability condition on $\cat{T}_Y$, we need the description of its numerical Grothendieck group,
and a notion of $\mu$-stability on it.
In this section we collect some basic results on
$\cat{T}_Y$ and $\Db(\PP^2,\kb_0)$ and describe their numerical Grothendieck groups. Having described $\cat{T}_Y$ as
an admissible sucategory of $\Db(\PP^2,\kb_0)$ turns out to be a fundamental step.

Let $\cat{D}$ be a triangulated category which arises as a subcategory in a semi-orthogonal decomposition of the bounded derived category of coherent sheaves on a smooth projective variety.
In particular, $\cat{D}$ is $\Ext$-finite (i.e., its $\Hom$-spaces are finite dimensional over $\CC$), the Euler characteristic
\[
\chi(-,-):=\sum_i(-1)^i\hom^i(-,-)
\]
is well-defined (where $\hom^i(-,-):=\dim_{\CC}\Hom(-,-[i])$), and it has a Serre functor (i.e., an autoequivalence $S:\cat{D}\isomor\cat{D}$ with functorial isomorphisms $\Hom(A,B)\cong\Hom(B,S(A))^\vee$, for all $A,B\in\cat{D}$).

Denote by $K(\cat{D})$ the Grothendieck group of $\cat{D}$.

\begin{definition}
A class $[A]\in K(\cat{D})$ is \emph{numerically trivial} if $\chi([M],[A])=0$, for all $[M]\in K(\cat{D})$.
Define the numerical Grothendieck group $\kn(\cat{D})$ as the quotient of $K(\cat{D})$ by numerically trivial classes.
\end{definition}

When $\cat{D}=\Db(X)$, for $X$ a smooth projective variety, we will denote $\kn(\cat{D})$ by $\kn(X)$.

\begin{lem}\label{lem:injectivityN}
Assume we have a semi-orthogonal decomposition $\cat{D}=\langle\cat{T},E\rangle$, with $E$ an exceptional object in $\cat{D}$.
Then
\[
\kn(\cat{T})\cong\left\{[M]\in\kn(\cat{D})\,:\,\chi([E],[M])=0\right\}
\]
and $\kn(\cat{D})\cong\kn(\cat{T})\oplus\ZZ[E]$.
\end{lem}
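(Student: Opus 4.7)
The plan is to lift the $K$-theoretic splitting induced by the semi-orthogonal decomposition $\cat{D} = \langle \cat{T}, E \rangle$ to the numerical quotient, using the Euler pairing with $[E]$ to isolate the $\ZZ[E]$-summand. The two numerical identities that drive everything are $\chi(E, E) = 1$ (because $E$ is exceptional) and $\chi(E, T) = 0$ for every $T \in \cat{T}$ (because $\Hom^*_\cat{D}(E, T) = 0$ under the convention of Definition \ref{def:semiorth}).

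First I would verify at the level of $K$-theory that $K(\cat{D}) = i_* K(\cat{T}) + \ZZ[E]$, where $i : \cat{T} \hookrightarrow \cat{D}$ denotes the inclusion. Given $M \in \cat{D}$, the semi-orthogonal decomposition provides a distinguished triangle $E' \to M \to T'$ with $T' \in \cat{T}$ and $E' \in \langle E \rangle \simeq \Db(\mathrm{pt})$, so $[E'] = n[E]$ for some $n \in \ZZ$ and $[M] = [T'] + n[E]$ in $K(\cat{D})$. I would then check that $i_*$ descends to a well-defined map $\phi : \kn(\cat{T}) \to \kn(\cat{D})$: if $[T] \in K(\cat{T})$ is numerically trivial in $\cat{T}$, then for any $M \in \cat{D}$ decomposed as above, $\chi(M, T) = \chi(T', T) + n\chi(E, T)$; the first summand vanishes by the numerical triviality of $[T]$ in $\cat{T}$ (using that $\cat{T}$ is a full subcategory, so the Euler characteristics agree), the second by semi-orthogonality. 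Injectivity of $\phi$ is automatic: if $\chi(N, T) = 0$ for all $N \in \cat{D}$, it holds in particular for $N \in \cat{T}$.

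Identifying $\im\phi$ with $\{[M] \in \kn(\cat{D}) : \chi([E], [M]) = 0\}$ amounts to observing that $\chi(E, T) = 0$ gives the containment, while applying $\chi(E, -)$ to $[M] = [T'] + n[E]$ yields $\chi(E, M) = n$, so the hypothesis $\chi(E, M) = 0$ forces $n = 0$, and hence $[M] = [T']$ already in $K(\cat{D})$. The direct-sum decomposition $\kn(\cat{D}) \cong \kn(\cat{T}) \oplus \ZZ[E]$ is then immediate: surjectivity is the first step passed to $\kn$, and if $[T] + n[E] = 0$ in $\kn(\cat{D})$, pairing against $[E]$ extracts $n = 0$, so $[T] = 0$ in $\kn(\cat{D})$ and thus in $\kn(\cat{T})$ by injectivity. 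I do not foresee a real obstacle: the statement is a formal consequence of the decomposition once $\chi(E, E) = 1$ and $\chi(E, T) = 0$ for $T \in \cat{T}$ are placed at the center.
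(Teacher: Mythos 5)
Your proposal is correct and follows essentially the same route as the paper: decompose any class in $K(\cat{D})$ as $[K_T]+n[E]$ via the semi-orthogonal filtration, use $\chi([E],\cat{T})=0$ and $\chi([E],[E])=1$ to show the induced map $\kn(\cat{T})\to\kn(\cat{D})$ is well defined and injective with image exactly $[E]^\perp$, and extract $n$ by pairing against $[E]$ to get the splitting. Nothing to add.
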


\begin{proof}
The inclusion functor $\cat{T}\mono\cat{D}$ induces a morphism $K(\cat{T})\to K(\cat{D})\to\kn(\cat{D})$.
If $[A]\in K(\cat{T})$ is such that $\chi(K(\cat{T}),[A])=0$, we have, for any $[K] \in K(\cat{D})$,
\[
\chi([K],[A])=\chi([K_T],[A])+n\chi([E],[A])=0,
\]
since every element $[K]\in K(\cat{D})$ can be written in $K(\cat{D})$ as $[K_T]+n[E]$, with $n$ an integer and $[K_T] \in K(\cat{T})$.
As a consequence we have an injective induced map $\kn(\cat{T})\mono\kn(\cat{D})$, whose image is contained in the set $[E]^\perp:=\left\{[M]\in\kn(\cat{D})\,:\,\chi([E],[M])=0\right\}$.
Furthermore, if $[K]\in[E]^\perp$, then from the decomposition $[K]=[K_T]+n[E]$, it follows easily that $n=0$, and so that $[K]\in\kn(\cat{T})$.
\end{proof}

\begin{prop}\label{prop:GrotGroup}
Let $Y$ be a cubic threefold. Then

{\rm (i)} $\kn(Y)\cong\ZZ^{\oplus 4}\cong\ZZ[\ko_Y]\oplus\ZZ[\ko_H]\oplus\ZZ[\ko_l]\oplus\ZZ[\ko_p]$, where $H$ is a hyperplane section, $l$ a line, and $p$ a point;

{\rm (ii)} $\kn(\cat{T}_Y)\cong\ZZ^{\oplus 2}\cong\ZZ[I_l]\oplus\ZZ([S_{\cat{T}_Y}(I_l)])$, where $S_{\cat{T}_Y}$ denotes the Serre functor of $\cat{T}_Y$. The Euler characteristic $\chi(-,-)$ on $\kn(\cat{T}_Y)$ has the form, with respect to this basis,
\[
\begin{pmatrix}
-1 & -1\\
\phantom{-}0 & -1
\end{pmatrix}.
\]
\end{prop}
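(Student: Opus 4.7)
The strategy is to identify $\kn(Y)$ via Hirzebruch--Riemann--Roch, cut $\kn(\cat{T}_Y)$ out by the orthogonality conditions coming from $\{\ko_Y, \ko_Y(1)\}$, and compute the Euler form on the pair $([I_l], [S_{\cat{T}_Y}(I_l)])$ by a short list of Ext-computations on $Y$; a discriminant argument then forces this pair to be an integral basis.

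For (i), Lefschetz gives $H^{2i}(Y, \ZZ) \iso \ZZ$ for $0 \leq i \leq 3$, so HRR implies $\kn(Y) \iso \ZZ^{\oplus 4}$; and GRR shows that $[\ko_Y], [\ko_H], [\ko_l], [\ko_p]$ have Chern characters supported in codimensions $0, 1, 2, 3$ with leading terms $1, h, [l], [\mathrm{pt}]$ (equivalently, they provide generators of the successive quotients of the topological filtration on $K(Y)$), and hence form a $\ZZ$-basis.

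For (ii), applying Lemma \ref{lem:injectivityN} twice to $\Db(Y) = \langle \cat{T}_Y, \ko_Y, \ko_Y(1) \rangle$ identifies
\[
\kn(\cat{T}_Y) \iso \{v \in \kn(Y) \st \chi(\ko_Y, v) = \chi(\ko_Y(1), v) = 0\}.
\]
Evaluating these two conditions on the basis from (i) using $\chi(\ko_Y(m))$ on $Y$ and $\chi(\ko_H(m))$ on the cubic surface $H$ shows they are linearly independent, so $\kn(\cat{T}_Y)$ is free of rank $2$; and since $[I_l] = [\ko_Y] - [\ko_l]$ satisfies both equations, $[I_l]$ and hence $[S_{\cat{T}_Y}(I_l)]$ lie in $\kn(\cat{T}_Y)$. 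To compute the Gram matrix of the Euler form on $(v_1, v_2) := ([I_l], [S_{\cat{T}_Y}(I_l)])$, expand $[I_l] = [\ko_Y] - [\ko_l]$ and evaluate the four summands of $\chi(I_l, I_l)$ using Serre--Grothendieck duality on $Y$ and the local-to-global spectral sequence together with $\mathcal{E}xt^q(\ko_l, \ko_l) \iso \Lambda^q N_{l/Y}$; this gives $\chi(\ko_l, \ko_l) = 0$ (independently of the splitting type of $N_{l/Y}$) and $\chi(v_1, v_1) = 1 - 1 - 1 + 0 = -1$. For the off-diagonal entry, the adjunction identity $\Hom_Y(A, \iota^R X) = \Hom_Y(A, X)$ for $A \in \cat{T}_Y$ gives $\chi(v_1, v_2) = \chi_Y(I_l, S_Y(I_l)) = -\chi(I_l, I_l(-2))$, and the analogous expansion with $\chi(\ko_Y(-2)) = -1 = \chi(\ko_l(-2))$, $\chi(\ko_l, \ko_Y(-2)) = -1$, and $\chi(\ko_l, \ko_l(-2)) = 0$ yields $\chi(v_1, v_2) = -1$.

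The remaining two entries come from Serre duality inside $\cat{T}_Y$. Since $\chi(v_1, v_1) = -1 \neq 0$, the form is not antisymmetric, so $S_{\cat{T}_Y}$ does not act as $-\id$ on the rank-$2$ lattice; combined with $S_{\cat{T}_Y}^3 = [5]$, its characteristic polynomial must therefore equal $x^2 - x + 1$, and in particular $S_{\cat{T}_Y}(v_2) = v_2 - v_1$. Applying the identity $\chi(A, B) = \chi(B, S_{\cat{T}_Y}(A))$ twice produces $\chi(v_2, v_2) = \chi(v_1, v_2) = -1$ and $\chi(v_2, v_1) = \chi(v_1, v_2 - v_1) = 0$, matching the stated matrix. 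Since its determinant equals $1$ and the discriminant of the Euler form on $\kn(\cat{T}_Y)$ is an integer, the squared index of $\ZZ\langle v_1, v_2\rangle$ in $\kn(\cat{T}_Y)$ divides $1$, so $\{v_1, v_2\}$ is an integral basis. The main obstacle is the cohomological computation of $\chi(\ko_l, \ko_l)$ and $\chi(\ko_l, \ko_l(-2))$ via the local-to-global spectral sequence; the conceptually trickiest point is the adjunction argument that lets one replace $S_{\cat{T}_Y}$ by the much simpler $S_Y = (-\otimes \ko_Y(-2))[3]$ inside the off-diagonal Euler pairing.
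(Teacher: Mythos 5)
Your proposal is correct, and for part (ii) it reaches the Gram matrix by a genuinely different route than the paper. The paper cites \cite[Cor.\ 5.8]{Kuz5} for (i) and then gets the whole matrix from just two facts: $\chi(I_l,I_l)=-1$ and $S_{\cat{T}_Y}^3\cong[5]$. Serre duality in the form $\chi(A,B)=\chi(B,S(A))$ immediately gives $\chi(I_l,S(I_l))=\chi(I_l,I_l)=-1$ and $\chi(S(I_l),S(I_l))=\chi(I_l,I_l)$, while the relation $S^3=[5]$ forces $\chi(S(I_l),I_l)=-\chi(S(I_l),I_l)=0$; integrality of the basis is then read off by pairing a general class against $[I_l]$ and $[S(I_l)]$ (the coefficients are $-\chi([A],[I_l])$ and $-\chi([S(I_l)],[A])$). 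By contrast, you compute $\chi(v_1,v_1)$ and $\chi(v_1,v_2)$ by expanding $[I_l]=[\ko_Y]-[\ko_l]$ and doing sheaf cohomology on $Y$, recover $S(v_2)=v_2-v_1$ from the characteristic polynomial $x^2-x+1$, and get integrality from unimodularity of the Gram matrix. All of this checks out (your values $\chi(\ko_l,\ko_l)=0$, $\chi(\ko_l,\ko_l(-2))=0$, etc.\ are right, and the discriminant argument is valid since the Euler form is $\ZZ$-valued), but note that your most laborious step — the adjunction $\chi(I_l,S_{\cat{T}_Y}(I_l))=\chi_Y(I_l,S_Y(I_l))$ followed by the four-term HRR expansion of $\chi(I_l,I_l(-2))$ — is redundant: Serre duality inside $\cat{T}_Y$ already gives $\chi(v_1,v_2)=\chi(v_1,v_1)$ with no further computation. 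What your approach buys is independence from the external reference for (i) and a self-contained geometric verification of $\chi(I_l,I_l)=-1$ (which the paper asserts without proof); what the paper's approach buys is brevity and the observation that everything except that single integer is formal.
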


\begin{proof}
For the first part, see e.g.\ \cite[Cor.\ 5.8]{Kuz5}.
Let us prove the second part.
By Lemma \ref{lem:injectivityN}, we know that $\kn(\cat{T}_Y)$ is the right orthogonal complement of the classes $[\ko_Y]$ and $[\ko_Y(1)]$. Hence $\kn(\cat{T}_Y) \cong \ZZ^{\oplus 2}$. We have $\chi(I_l,I_l) = \chi(S_{\cat{T}_Y}(I_l),S_{\cat{T}_Y}(I_l))= \chi(I_l,S_{\cat{T}_Y}(I_l))=-1$.

By \cite[Cor.\ 4.4]{Kuz1}, $S_{\cat{T}_Y}^3 \cong [5]$ and so
\[
\chi(S_{\cat{T}_Y}(I_l),I_l) = \chi(I_l,S^2_{\cat{T}_Y}(I_l)) = \chi(I_l,S^{-1}_{\cat{T}_Y}(I_l)[5]) = \chi(S_{\cat{T}_Y}(I_l),I_l[5]) = - \chi(S_{\cat{T}_Y}(I_l),I_l),
\]
which means that $\chi(S_{\cat{T}_Y}(I_l),I_l) = 0$ and $[I_l], [S_{\cat{T}_Y}(I_l)]$ form a basis for $\kn(\cat{T}_Y) \otimes \QQ$. Then any $[A] \in \kn(\cat{T}_Y)$ can be written as $a[I_l] + b[S_{\cat{T}_Y}(I_l)]$ for $a,b \in \QQ$. But $\chi([A], [I_l])=-a$ and $\chi([S_{\cat{T}_Y}(I_l)],[A])=-b$ are integers.
\end{proof}

\begin{lem}\label{prop:-1classes}
Let $[A]$ be a class in $\kn(\cat{T}_Y)$ such that $\chi([A],[A]) = -1$. Then, up to a sign, $[A]$ is
either $[I_l]$, or $[S_{\cat{T}_Y}(I_l)]$, or $[S^2_{\cat{T}_Y}(I_l)]$.
\end{lem}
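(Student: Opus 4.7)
The plan is to combine the explicit basis and Euler form of $\kn(\cat{T}_Y)$ from Proposition~\ref{prop:GrotGroup} with Kuznetsov's identity $S_{\cat{T}_Y}^3 \cong [5]$ to reduce the question to elementary integer arithmetic. First, writing $[A] = a[I_l] + b[S_{\cat{T}_Y}(I_l)]$ with $a, b \in \ZZ$, the matrix of $\chi$ given in Proposition~\ref{prop:GrotGroup} shows that the condition $\chi([A],[A]) = -1$ is equivalent to the Diophantine equation
\[
a^2 + ab + b^2 = 1.
\]
This is the norm form of the ring of Eisenstein integers, whose units are precisely the six sixth roots of unity; equivalently, multiplying by $4$ rewrites the equation as $(2a+b)^2 + 3b^2 = 4$, which forces $b \in \{-1,0,1\}$ and yields exactly the six solutions $(a,b) \in \{\pm(1,0),\ \pm(0,1),\ \pm(1,-1)\}$.

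Second, I match these six classes with $\pm[I_l]$, $\pm[S_{\cat{T}_Y}(I_l)]$, $\pm[S^2_{\cat{T}_Y}(I_l)]$. The first four correspond to $\pm(1,0)$ and $\pm(0,1)$ by the very choice of basis, so it is enough to identify $[S^2_{\cat{T}_Y}(I_l)]$ with either $(1,-1)$ or $(-1,1)$. To this end, let $M$ be the matrix of $S_{\cat{T}_Y}$ acting on $\kn(\cat{T}_Y)$ in the basis $\{[I_l], [S_{\cat{T}_Y}(I_l)]\}$. By construction, the first column of $M$ is $(0,1)^t$, and writing $[S^2_{\cat{T}_Y}(I_l)] = \alpha[I_l] + \beta[S_{\cat{T}_Y}(I_l)]$ the second column is $(\alpha,\beta)^t$. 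Since $S_{\cat{T}_Y}^3 \cong [5]$ acts as multiplication by $(-1)^5 = -1$ on the Grothendieck group, we must have $M^3 = -I$; a direct computation shows this forces $\alpha = -1$, $\beta = 1$. Consequently $[S^2_{\cat{T}_Y}(I_l)]$ corresponds to $(-1,1)$ and $-[S^2_{\cat{T}_Y}(I_l)]$ to $(1,-1)$, exhausting the list.

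I do not expect any conceptual obstacle: the only nontrivial input is the fractional Calabi--Yau relation $S_{\cat{T}_Y}^3 \cong [5]$, which was already used in the proof of Proposition~\ref{prop:GrotGroup}, while the rest of the argument is routine bookkeeping with a positive-definite binary quadratic form.
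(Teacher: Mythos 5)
Your proof is correct and follows essentially the same route as the paper: both reduce, via the Euler form of Proposition \ref{prop:GrotGroup}, to the Diophantine equation $a^2+ab+b^2=1$ and enumerate its six solutions (your completion of the square versus the paper's case split on $ab$ is an immaterial difference). The one point where you add value is the explicit verification that $[S^2_{\cat{T}_Y}(I_l)]=-[I_l]+[S_{\cat{T}_Y}(I_l)]$ via $M^3=-I$, which the paper asserts without comment.
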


\begin{proof}
By Proposition \ref{prop:GrotGroup}, a class $[A]=a[I_l] + b[S_{\cat{T}_Y}(I_l)]$ ($a,b\in\ZZ$) satisfies $\chi([A],[A])=-1$ if and only if $a^2+b^2+ab=1$.
But this is possible if and only if either $ab=-1$ (and so $(a,b)=\pm(-1,1)$, that means $[A]=\pm[S^2_{\cat{T}_Y}(I_l)]$) or $ab=0$ (and so $(a,b)=\pm(1,0),\pm(0,1)$), as wanted.
\end{proof}

\begin{prop}\label{prop:properties}

{\rm (i)} $(\Phi')^{-1}(\ko_{\widetilde{Y}}(h-H))\cong\kb_1$. Consequently,
$\fu(\cat{T}_Y)=\langle \kb_1\rangle^{\perp}$.

{\rm (ii)} Serre duality holds for $\Db(\PP^2,\kb_0)$ and the Serre functor $S_{\kb_0}$ is given by $(-)\otimes_{\kb_0}\kb_{-1}[2]$, where $\kb_{-1}:=\kb_1(-h)$.
In particular, $S_{\kb_0}(\kb_1)\cong\kb_0[2]$ and $S^2_{\kb_0}(\kb_1)\cong\kb_{-1}[4]$.
\end{prop}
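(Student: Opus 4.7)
My approach splits the proposition into two essentially independent arguments.

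For part (i), I would compute $\Phi'(\kb_1)$ directly. Since $\Phi'=R_{\ko_{\widetilde{Y}}(-h)}\circ\Phi$ by construction, the computation splits into two steps. First I would identify $\Phi(\kb_1)=\pi^*\kb_1\otimes_{\pi^*\kb_0}\ke'$: my plan is to tensor the defining resolution \eqref{eq:defofkeprime} of $\alpha_*\ke'$ with $\pi^*\kb_1$ over $\pi^*\kb_0$ and use the Clifford multiplication $\kb_1\otimes_{\kb_0}\kb_1\cong\kb_0(h)$ to rewrite the outer terms purely as twists of $q^*\kb_0$ and $q^*\kb_1$; this determines $\Phi(\kb_1)$ as a sheaf on $\widetilde{Y}$. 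Second, I would apply the right mutation $R_{\ko_{\widetilde{Y}}(-h)}$ using the defining triangle
\[
R_{\ko_{\widetilde{Y}}(-h)}(M)\to M\to\RHom_{\widetilde{Y}}(M,\ko_{\widetilde{Y}}(-h))^\vee\otimes\ko_{\widetilde{Y}}(-h).
\]
The graded vector space $\RHom(\Phi(\kb_1),\ko_{\widetilde{Y}}(-h))$ reduces, either by adjunction to $\RHom_{\kb_0}(\kb_1,\Pi(\ko_{\widetilde{Y}}(-h)))$ or by pushing forward along $\alpha$ and $q$ and using the projection formula with Bott vanishing on $\widetilde{\PP^4}\to\PP^2$, to a one-dimensional space concentrated in a single cohomological degree; the resulting triangle collapses to the claimed $\Phi'(\kb_1)\cong\ko_{\widetilde{Y}}(h-H)$. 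As a sanity check, $\ko_{\widetilde{Y}}(h-H)$ already lies in $\Phi'(\Db(\PP^2,\kb_0))$ by comparing \eqref{eq:3} with \eqref{eq:4}, so its preimage under the fully faithful $\Phi'$ is at least well-defined.

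For part (ii), my plan is to invoke the standard Serre-duality formalism for sheaves of algebras. Since $\kb_0$ is a sheaf of $\ko_{\PP^2}$-algebras, locally free and of finite rank as an $\ko_{\PP^2}$-module over the smooth projective base $\PP^2$, the category $\Db(\PP^2,\kb_0)$ admits a Serre functor of the form
\[
S_{\kb_0}(M)=M\otimes_{\kb_0}^{\mathbf{L}}\omega_{\kb_0}[2],\qquad\omega_{\kb_0}:=\sHom_{\ko_{\PP^2}}(\kb_0,\omega_{\PP^2}),
\]
with $\omega_{\kb_0}$ its natural $\kb_0$-bimodule structure. The content of (ii) is then to identify $\omega_{\kb_0}\cong\kb_{-1}$ as bimodules. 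As an $\ko_{\PP^2}$-module this is immediate from the explicit decompositions of $\kb_0$ and $\kb_1$ recalled in Section \ref{subsec:KuzneQuadrics}: both sides equal $\ko_{\PP^2}(-3h)\oplus\ko_{\PP^2}(-2h)\oplus\ko_{\PP^2}(-h)^{\oplus 2}$. Compatibility with the left and right $\kb_0$-actions then comes from the natural pairing of Clifford components induced by multiplication in the full Clifford algebra composed with projection to the top graded piece. The two consequences $S_{\kb_0}(\kb_1)\cong\kb_0[2]$ and $S_{\kb_0}^2(\kb_1)\cong\kb_{-1}[4]$ then follow at once from the Clifford identities $\kb_1\otimes_{\kb_0}\kb_{-1}\cong\kb_0$ and $\kb_0\otimes_{\kb_0}\kb_{-1}\cong\kb_{-1}$.

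The principal obstacle I anticipate is the careful bookkeeping needed in part (i), especially tracking the interplay between the hyperplane class $H$ on $\PP^4$, the pullback of a line $h$ from $\PP^2$, and the bimodule twists through both the tensor product defining $\Phi(\kb_1)$ and the mutation triangle. A secondary, milder, subtlety in part (ii) is that checking the $\ko_{\PP^2}$-isomorphism $\omega_{\kb_0}\cong\kb_{-1}$ is actually an isomorphism of $\kb_0$-\emph{bimodules} requires going beyond the graded decomposition and invoking the multiplicative structure of the Clifford algebra, not merely its $\ZZ$-graded underlying sheaf.
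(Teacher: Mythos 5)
Your part (ii) is essentially the paper's own argument: the entire content is the bimodule identification $\omega_{\kb_0}=\sHom_{\ko_{\PP^2}}(\kb_0,\omega_{\PP^2})\cong\kb_{-1}$, which is exactly the observation $\kb_{-1}\cong\kb_0^\vee\otimes\omega_{\PP^2}$ made in the paper, and both treatments deduce the two displayed consequences from $\kb_1\otimes_{\kb_0}\kb_{-1}\cong\kb_0$ (\cite[Cor.\ 3.9]{Kuz2}). Part (i) is where you genuinely diverge. The paper works backwards: since $\Psi(\ko_{\widetilde{Y}}(mh))=0$ for all $m$ by \eqref{eq:interestingformula}, the mutation $R_{\ko_{\widetilde{Y}}(-h)}$ is invisible to the adjoint, so $(\Phi')^{-1}(\ko_{\widetilde{Y}}(h-H))\cong\Psi(\ko_{\widetilde{Y}}(h-H))$, and a short computation with \eqref{eq:defofke} and relative Serre duality along $\widetilde{\PP^4}\to\PP^2$ gives $\kb_1$ directly. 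Your forward route is legitimate up to a point: tensoring \eqref{eq:defofkeprime} with $q^*\kb_1$ and using $\kb_1\otimes_{\kb_0}\kb_1\cong\kb_0(h)$ does yield $0\to q^*\kb_1(-2H)\to q^*\kb_0(h-H)\to\alpha_*\Phi(\kb_1)\to0$, and $\RHom(\Phi(\kb_1),\ko_{\widetilde{Y}}(-h))\cong\RHom_{\kb_0}(\kb_1,\Pi(\ko_{\widetilde{Y}}(-h)))\cong\RHom_{\kb_0}(\kb_1,\kb_1)\cong\CC$ in degree $0$ — though note that the input $\Pi(\ko_{\widetilde{Y}}(-h))\cong\kb_1$ is itself a computation of the same difficulty as the paper's proof of (i) (it appears later, in Lemma \ref{lem:vanishing}).

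The gap is the last step. The mutation triangle $\Phi'(\kb_1)\to\Phi(\kb_1)\to\ko_{\widetilde{Y}}(-h)$ does not ``collapse'' on its own: knowing the middle and right-hand terms does not determine the third vertex, and you say nothing about the evaluation map (whether it is surjective, or what its kernel is). Beware also that the tempting shortcut --- ``$\Phi'(\kb_1)$ is exceptional and lies in the orthogonal of $\sigma^*\cat{T}_Y$ inside $\langle\sigma^*\cat{T}_Y,\ko_{\widetilde{Y}}(h-H)\rangle$, hence is a shift of $\ko_{\widetilde{Y}}(h-H)$'' --- is circular here, since $(\Phi')^{-1}(\sigma^*\cat{T}_Y)=\langle(\Phi')^{-1}(\ko_{\widetilde{Y}}(h-H))\rangle^\perp$ is precisely what part (i) is computing. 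To close the gap within your framework you would need to show the evaluation map $\Phi(\kb_1)\to\ko_{\widetilde{Y}}(-h)$ is surjective (so that $\Phi'(\kb_1)$ is its kernel, a line bundle since $\Phi(\kb_1)$ is locally free of rank $2$) and then pin down its class in $\Pic(\widetilde{Y})=\ZZ h\oplus\ZZ H$ by a Chern class computation from the resolution above. That is doable but is exactly the ``bookkeeping'' you flag as the obstacle; the paper's one-line adjoint computation is the more economical route.
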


\begin{proof}
(i) As in Example \ref{ex:idealsheaves}, by \eqref{eq:interestingformula} we have $(\Phi')^{-1}(\ko_{\widetilde{Y}}(h-H))\cong\Psi(\ko_{\widetilde{Y}}(h-H))$.
Then, by \eqref{eq:defofke}, we can conclude that
\begin{equation*}
\begin{split}
\Psi(\ko_{\widetilde{Y}}(h-H))&\cong\Psi(\alpha^*(\ko_{\widetilde{\PP^4}}(h-H)))\\
       &\cong q_*(\ko_{\widetilde{\PP^4}}(2h-H)\otimes\alpha_*(\ke)[1])\\
       &\cong q_*(q^*(\kb_1)(h-3H)[2])\cong\kb_1,
\end{split}
\end{equation*}
where, for the last isomorphism, we used relative Serre duality with dualizing sheaf $\ko_{\widetilde{\PP}^4}(h-3H)$. The second statement is now (\ref{eqn:last}).

(ii) The expression for the Serre functor is a standard computation using adjunction, existence of locally free resolutions, and \cite[Sect.\ 2.1]{Kuz2}, once we observe that $\kb_{-1}\cong\kb_0^\vee\otimes\omega_{\PP^2}$ (using (\ref{bascoh})), where the dual is taken with respect to the $\ko_{\PP^2}$-module structure.
For the last statement (which can also be proved by direct computation), we need to show that
$$\kb_1\otimes_{\kb_0}\kb_{-1}\cong\kb_0,$$ which is precisely \cite[Cor.\ 3.9]{Kuz2}.
\end{proof}

\begin{remark}\label{rmk:homdim}
Observe that the duality statement of the above proposition implies, in particular, that the homological dimension of $\Db(\PP^2,\kb_0)$ is $2$.
\end{remark}

\begin{ex}\label{ex:ideallineblownup}
Consider the ideal sheaf $I_{l_0}$ of the blown-up line $l_0$ in $Y$. We describe the object $\cd{I_{l_0}}$: it is
a true complex with two non-vanishing cohomologies.
We have an exact triangle
\[
\kb_0[1]\to \cd{I_{l_0}}\to\kb_1\mor[\eta]\kb_0[2]
\]
 in $\Db(\PP^2,\kb_0)$, where $\eta$ is the map corresponding to the identity of $\kb_1$ via Serre duality (recall that $\Hom(\kb_1,\kb_1)\cong\CC$).

Indeed, first of all, by \cite[Prop.\ 11.12]{Huy}, we have an exact triangle
\[
\ko_D(D)[1]\to\sigma^*(\ko_{l_0})\to\ko_D
\]
in $\Db(\widetilde{Y})$, where as before $D$ denotes the exceptional divisor of $\widetilde{Y}$.
By \eqref{eq:interestingformula}, to compute $\cd{I_{l_0}}$ it is sufficient to compute $\Psi(\ko_D)$ and $\Psi(\ko_D(D))$.
To this end, we simply use \eqref{eq:defofke} and the exact sequence
\[
0\to\ko_{\widetilde{Y}}(h-H)\to\ko_{\widetilde{Y}}\to\ko_D\to 0.
\]
We have
\begin{align*}
\Psi(\ko_D)&\cong\Psi(\ko_{\widetilde{Y}}(h-H))[1]\cong\kb_1[1]\\
\Psi(\ko_D(D))&\cong\Psi(\ko_{\widetilde{Y}}(H-h))\cong\kb_0[1].
\end{align*}
Hence, we have an exact triangle
\[
\kb_0[2]\to\Psi(\sigma^*\ko_{l_0})\to\kb_1[1].
\]
By \eqref{eq:interestingformula}, we have $\cd{I_{l_0}}\cong\Psi(\sigma^*I_{l_0})\cong\Psi(\sigma^*\ko_{l_0})[-1]$.
Hence we can write $\cd{I_{l_0}}$ as an extension of $\kb_0[1]$ by $\kb_1$.
The fact that it is the unique non-trivial extension follows from $\cd{I_{l_0}}\in\langle\kb_1\rangle^\perp$.
\end{ex}

\begin{prop}\label{prop:GrotGroupModules}
We have
\[
\kn(\PP^2,\kb_0):=\kn(\Db(\PP^2,\kb_0))\cong\ZZ^{\oplus 3}\cong\ZZ[\kb_1]\oplus\ZZ[\kb_0]\oplus\ZZ[\kb_{-1}].
\]
\end{prop}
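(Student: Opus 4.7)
The plan is to combine the semi-orthogonal decomposition already established in Section~\ref{subsec:KuzneQuadrics} with a direct computation of the $3\times 3$ Euler-pairing Gram matrix on the three candidate classes.

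First, using Proposition~\ref{prop:properties}(i), which identifies $(\Phi')^{-1}(\ko_{\widetilde{Y}}(h-H))\cong\kb_1$, I would transport the decomposition $\Phi'(\Db(\PP^2,\kb_0))=\langle\sigma^*\cat{T}_Y,\ko_{\widetilde{Y}}(h-H)\rangle$ obtained at the end of Section~\ref{subsec:KuzneQuadrics} across the fully faithful functor $\Phi'$ to produce a semi-orthogonal decomposition
\[
\Db(\PP^2,\kb_0)=\langle\ka_Y,\kb_1\rangle,
\]
where $\ka_Y:=(\Phi')^{-1}(\sigma^*\cat{T}_Y)$ is equivalent to $\cat{T}_Y$. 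Since $\kb_1$ is exceptional (its image under the fully faithful $\Phi'$ is), Lemma~\ref{lem:injectivityN} combined with Proposition~\ref{prop:GrotGroup}(ii) yields
\[
\kn(\PP^2,\kb_0)\cong\kn(\ka_Y)\oplus\ZZ[\kb_1]\cong\kn(\cat{T}_Y)\oplus\ZZ[\kb_1]\cong\ZZ^{\oplus 3}.
\]

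To upgrade this to the statement that $\{[\kb_1],[\kb_0],[\kb_{-1}]\}$ is a $\ZZ$-basis, I would first observe that the Serre functor of Proposition~\ref{prop:properties}(ii) identifies the left and right radicals of $\chi$ on $\kn(\PP^2,\kb_0)$, so the Euler form is non-degenerate there. Then, fixing any $\ZZ$-basis with Gram matrix of discriminant $d\neq 0$ and writing the three candidate classes via a change-of-basis matrix $A$, one has $\det(\chi(\kb_i,\kb_j))=(\det A)^2\cdot d$, which forces $\det A=\pm 1$ as soon as the displayed Gram matrix is unimodular. The nine entries can be computed from: (a) $\RHom_{\kb_0}(\kb_0,-)\cong\RGamma(\PP^2,-)$ combined with the $\ko_{\PP^2}$-module splittings of $\kb_0,\kb_1,\kb_{-1}$ recalled in Section~\ref{subsec:KuzneQuadrics}; (b) the Serre-duality identity $\chi(A,B)=\chi(B,A\otimes_{\kb_0}\kb_{-1})$ from Proposition~\ref{prop:properties}(ii); and (c) the bimodule isomorphisms $\kb_1\otimes_{\kb_0}\kb_{-1}\cong\kb_0$ (from Proposition~\ref{prop:properties}), $\kb_{-1}\otimes_{\kb_0}\kb_{-1}\cong\kb_0(-h)$, and $\kb_1\otimes_{\kb_0}\kb_1\cong\kb_0(h)$. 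Assembled together, these produce
\[
\bigl(\chi(\kb_i,\kb_j)\bigr)_{i,j\in\{1,0,-1\}}=\begin{pmatrix}1&1&2\\ 2&1&1\\ 4&2&1\end{pmatrix},
\]
of determinant $1$.

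The hard part is input~(c): beyond the one identity $\kb_1\otimes_{\kb_0}\kb_{-1}\cong\kb_0$ already quoted from \cite[Cor.~3.9]{Kuz2}, the two companions $\kb_1\otimes_{\kb_0}\kb_1\cong\kb_0(h)$ and $\kb_{-1}\otimes_{\kb_0}\kb_{-1}\cong\kb_0(-h)$ have to be extracted from the Clifford relation $e\cdot e\in\ko_{\PP^2}(h)$ inside the $\ZZ/2\ZZ$-graded Clifford algebra $\kb_0\oplus\kb_1$, together with the $\ko_{\PP^2}$-linearity of multiplication and the definition $\kb_{-1}=\kb_1\otimes_{\ko_{\PP^2}}\ko_{\PP^2}(-h)$. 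As a technical alternative, one can avoid these bimodule computations by transporting the Euler pairings to $\Db(\widetilde{Y})$ via $\Phi'$ and computing them as sheaf cohomology on the blow-up, using the presentations \eqref{eq:defofkeprime} and \eqref{eq:defofke} and a right mutation through $\ko_{\widetilde{Y}}(-h)$ to describe $\Phi'(\kb_0)$ and $\Phi'(\kb_{-1})$.
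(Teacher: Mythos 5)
Your proposal is correct, and its second half is genuinely different from the paper's. The first half coincides: both arguments use $\Db(\PP^2,\kb_0)=\langle(\Phi')^{-1}\sigma^*\cat{T}_Y,\kb_1\rangle$ together with Lemma \ref{lem:injectivityN} and Proposition \ref{prop:GrotGroup}(ii) to get $\kn(\PP^2,\kb_0)\cong\kn(\cat{T}_Y)\oplus\ZZ[\kb_1]\cong\ZZ^{\oplus3}$. To see that the three candidate classes are a basis, the paper does not compute any Gram matrix: it writes down the images in $\kn(\PP^2,\kb_0)$ of the explicit basis $[I_l],[S_{\cat{T}_Y}(I_l)]$ of $\kn(\cat{T}_Y)$, namely $[\kb_1]-[\kb_0]$ (from Example \ref{ex:ideallineblownup}) and $2[\kb_0]-[\kb_{-1}]$ (via $S_{\kb_0}$ followed by the right mutation through $\kb_0$), and observes that together with $[\kb_1]$ these are related to $\{[\kb_1],[\kb_0],[\kb_{-1}]\}$ by a unimodular integer matrix. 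Your route instead computes the full $3\times3$ Euler pairing on the candidates and uses the discriminant identity $\det(\chi(\kb_i,\kb_j))=(\det A)^2 d$; I checked your Gram matrix and its determinant, and they are right (note that $(\det A)^2 d=1$ with both factors integers already forces $\det A=\pm1$, so the preliminary non-degeneracy remark is not even needed). One simplification worth noting: the ``hard part (c)'' you flag is avoidable. You never need $\kb_1\otimes_{\kb_0}\kb_1$ or $\kb_{-1}\otimes_{\kb_0}\kb_{-1}$: since $-\otimes_{\kb_0}\kb_{\pm1}$ are mutually inverse autoequivalences and $-\otimes\ko_{\PP^2}(h)$ commutes with them, all nine entries follow from $\chi(\kb_1,M)=\chi(\kb_0,M\otimes_{\kb_0}\kb_{-1})$, $\chi(\kb_{-1},M)=\chi(\kb_1,M(h))$, $\chi(\kb_0,M)=\chi(\PP^2,\mathrm{Forg}(M))$, and the single isomorphism $\kb_1\otimes_{\kb_0}\kb_{-1}\cong\kb_0$ already quoted in Proposition \ref{prop:properties}(ii). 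What your approach buys is independence from Example \ref{ex:ideallineblownup} and from the mutation bookkeeping for the Serre functor; what the paper's approach buys is that the same computation of $[(\sigma_*\circ\Phi')^{-1}(I_l)]$ and $[(\sigma_*\circ\Phi')^{-1}(S_{\cat{T}_Y}(I_l))]$ is reused elsewhere, so the basis statement comes essentially for free.
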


\begin{proof}
By Lemma \ref{lem:injectivityN}, $\kn(\PP^2,\kb_0)\cong\kn(\cat{T}_Y)\oplus\ZZ[\kb_1]$.
By Example \ref{ex:ideallineblownup}, we know that $[\cd{I_l}]=[\kb_1]-[\kb_0]$.
Finally, by Proposition \ref{prop:properties} (ii), making use of the expression for
the Serre functor of an admissible category (see \cite{Bondal}),
\[
\Xi(S_{\cat{T}_Y}(-))\cong R_{\kb_0}(S_{\kb_0}(\Xi(-))).
\]
Hence
\begin{equation*}
\begin{split}
[\cd{S_{\cat{T}_Y}(I_l)}]&=[S_{\kb_0}(\cd{I_l})]-\chi(S_{\kb_0}(\cd{I_l}),\kb_0)[\kb_0]\\
     &=[\kb_0]-[\kb_{-1}]-(-1)[\kb_0]\\
     &=2[\kb_0]-[\kb_{-1}].
\end{split}
\end{equation*}
This implies, by Proposition \ref{prop:GrotGroup}, that $[\kb_1]$, $[\kb_0]$, and $[\kb_{-1}]$ form a basis of $\kn(\PP^2,\kb_0)$.
\end{proof}

\subsection{$\mu$-stability}\label{sec:mustability}

In order to define a stability condition on $\cat{T}_Y$ we need a $t$-structure and a stability function on it. To this end,
we construct a $t$-structure and a stability function on $\Db(\PP^2,\kb_0)$ and restrict them using the identification
of $\cat{T}_Y$ with an admissible subcategory of $\Db(\PP^2,\kb_0)$. Stability conditions can be defined on surfaces
by tilting the standard $t$-structure with respect to a torsion pair \cite{Br1}. In order to perform this first step on $\Db(\PP^2,\kb_0)$,
we need a notion of $\mu$-stability for sheaves of $\kb_0$-algebras.

In this section we show that there exists a notion of $\mu$-stability on $\coh(\PP^2,\kb_0)$ (or rather for objects in $\coh(\PP^2,\kb_0)$ which are torsion-free as sheaves on $\PP^2$) satisfying the following properties:

(1) Harder--Narasimhan and Jordan--H\"older filtrations in $\mu$-(semi)stable objects exist.

(2) $\Hom(K,\widetilde{K})=0$, if $K,\widetilde{K}$ are $\mu$-semistable torsion-free sheaves with $\mu(K)>\mu(\widetilde{K})$.

(3) The Serre functor preserves $\mu$-stability: If $K$ is a torsion-free $\mu$-semistable sheaf, then $K\otimes_{\kb_0}\kb_{-1}$ is $\mu$-semistable too and
\[
\mu(K\otimes_{\kb_0}\kb_{-1})<\mu(K).
\]

(4) The exceptional object $\kb_1$ is $\mu$-stable.

\medskip

We start by defining the numerical functions \emph{rank} and \emph{degree} on $\kn(\PP^2,\kb_0)$.
Consider the forgetful functor $\mathrm{Forg}:\Db(\PP^2,\kb_0)\to\Db(\PP^2)$ which forgets the structure of $\kb_0$-module.
Then $\mathrm{Forg}$ has a left adjoint $-\otimes_{\ko_{\PP^2}}\kb_0$. Hence, $\mathrm{Forg}$ induces a group homomorphism
\[
\kn(\PP^2,\kb_0)\longrightarrow\kn(\PP^2)\cong K(\PP^2)
\]
between the numerical Grothendieck groups of $\Db(\PP^2,\kb_0)$ and $\Db(\PP^2)$.
Define
\begin{align*}
\rk&:\kn(\PP^2,\kb_0)\to\ZZ,\qquad\rk(K):=\rk(\mathrm{Forg}(K))\\
\deg&:\kn(\PP^2,\kb_0)\to\ZZ,\qquad\deg(K):=\mathrm{c}_1(\mathrm{Forg}(K)).\mathrm{c}_1(\ko_{\PP^2}(h)).
\end{align*}
For $K\in\coh(\PP^2,\kb_0)$ with $\rk(K)\neq0$, define the slope $\mu(K):=\deg(K)/\rk(K)$. Moreover, when we say that $K$ is either torsion-free or torsion of dimension $d$, we always mean that $\mathrm{Forg}(K)$ has this property.


\begin{lem}\label{lem:rankpoints}
{\rm (i)} The image of the numerical function $\rk:\kn(\PP^2,\kb_0)\to\ZZ$ is $4 \ZZ$.

{\rm (ii)} Let $K \in \cat{Coh}(\PP^2, \kb_0)$. Then $\rk(K)$ is a multiple of $8$ if and only if $\deg(K)$ is even.

{\rm (iii)} Let $K \in \cat{Coh}(\PP^2, \kb_0)$ be such that $\rk(K)= \deg(K) = 0$. Then $\Hom^i(\kb_1,K)=0$, for $i\neq 0$, and $\Hom(\kb_1,K)\neq 0$.
\end{lem}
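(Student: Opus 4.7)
The plan is to reduce each part to an explicit computation on the basis $[\kb_1],[\kb_0],[\kb_{-1}]$ of $\kn(\PP^2,\kb_0)$ furnished by Proposition \ref{prop:GrotGroupModules}, using the presentations of $\kb_0$ and $\kb_1$ as $\ko_{\PP^2}$-modules given in Section \ref{subsec:KuzneQuadrics}.

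For (i) and (ii) I would just evaluate $\rk$ and $\deg$ on the three generators. Since $\kb_{-1}=\kb_1(-h)$, the explicit splittings give $\rk(\kb_1)=\rk(\kb_0)=\rk(\kb_{-1})=4$, so the image of $\rk$ is $4\ZZ$, proving (i). A direct Chern-class computation yields $\deg(\kb_1)=-3$, $\deg(\kb_0)=-5$ and $\deg(\kb_{-1})=-7$, all odd. Writing $[K]=a[\kb_1]+b[\kb_0]+c[\kb_{-1}]$, this gives $\rk(K)=4(a+b+c)$ and $\deg(K)\equiv a+b+c\pmod 2$, hence $8\mid\rk(K)\Leftrightarrow a+b+c$ is even $\Leftrightarrow\deg(K)$ is even, which is (ii).

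For (iii) the key step is to identify $\Hom^i(\kb_1,K)$ with an ordinary cohomology group on $\PP^2$. The isomorphism $\kb_1\otimes_{\kb_0}\kb_{-1}\cong\kb_0$ used in the proof of Proposition \ref{prop:properties} (together with the analogous $\kb_{-1}\otimes_{\kb_0}\kb_1\cong\kb_0$) shows that $(-)\otimes_{\kb_0}\kb_{-1}$ is an exact auto-equivalence of $\coh(\PP^2,\kb_0)$ sending $\kb_1$ to $\kb_0$. Combined with the standard identification $\Hom^i_{\kb_0}(\kb_0,M)\cong H^i(\PP^2,\mathrm{Forg}(M))$, coming from $\mathrm{Forg}$ being right adjoint to $(-)\otimes_{\ko_{\PP^2}}\kb_0$, this yields
\[
\Hom^i(\kb_1,K)\cong H^i\!\left(\PP^2,\mathrm{Forg}(K\otimes_{\kb_0}\kb_{-1})\right).
\]
Setting $F:=\mathrm{Forg}(K\otimes_{\kb_0}\kb_{-1})$, one checks on the basis (using also $\kb_{-1}\otimes_{\kb_0}\kb_{-1}\cong\kb_0(-h)$) that tensoring with $\kb_{-1}$ preserves $\rk$ and shifts $\deg$ by $-\tfrac{1}{2}\rk$. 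Hence $\rk(F)=\deg(F)=0$ whenever $\rk(K)=\deg(K)=0$; since any coherent sheaf on $\PP^2$ of positive-dimensional support has strictly positive degree, $F$ must be supported in dimension zero, so $H^i(\PP^2,F)=0$ for $i\neq 0$ and $\Gamma(\PP^2,F)\neq 0$ as soon as $F\neq 0$. Non-triviality of $F$ (assuming $K\neq 0$) follows because $(-)\otimes_{\kb_0}\kb_{-1}$ is an equivalence.

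The only subtle ingredient is the $t$-exactness of $(-)\otimes_{\kb_0}\kb_{-1}$, needed to know that $F$ is a genuine sheaf rather than a mere complex; this is guaranteed by the fact that the relevant isomorphisms ($\kb_j\otimes_{\kb_0}\kb_{-1}\cong$ a sheaf) hold at the level of $\coh(\PP^2,\kb_0)$ and not only in the derived category, so the autoequivalence preserves the standard heart.
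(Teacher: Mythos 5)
Your proof is correct and follows essentially the same route as the paper: parts (i) and (ii) by evaluating $\rk$ and $\deg$ on the basis $[\kb_1],[\kb_0],[\kb_{-1}]$ from Proposition \ref{prop:GrotGroupModules} (the paper likewise computes $\rk(K)=4(a+b+c)$ and $\deg(K)=-3a-5b-7c$), and part (iii) via the adjunctions $\Hom^i(\kb_1,K)\cong\Hom^i(\kb_0,K\otimes_{\kb_0}\kb_{-1})\cong H^i(\PP^2,\mathrm{Forg}(K\otimes_{\kb_0}\kb_{-1}))$ together with the observation that a rank-zero, degree-zero sheaf is supported on points. Your extra remark on the exactness of $(-)\otimes_{\kb_0}\kb_{-1}$, coming from $\kb_{-1}$ being an invertible bimodule, is a point the paper leaves implicit but uses elsewhere (e.g.\ in Lemma \ref{lemma:property(3)}).
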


\begin{proof}
By Proposition \ref{prop:GrotGroupModules}, we have
\[
[K]=a[\kb_1]+b[\kb_0]+c[\kb_{-1}],
\]
for some integer $a,b,c$. Then $\rk(K)=4(a+b+c)$ and $\deg(K)=-3a-5b-7c$. This proves (i) and (ii).

For part (iii), applying the functor $(-)\otimes_{\kb_0}\kb_{-1}$, we have
\[
\Hom^i(\kb_1,K)\cong\Hom^i(\kb_0,K\otimes_{\kb_0}\kb_{-1})\cong\Hom^i(\ko_{\PP^2},\mathrm{Forg}(K\otimes_{\kb_0}\kb_{-1})),
\]
where the last isomorphism is again given by adjunction. Now $K':=\mathrm{Forg}(K\otimes_{\kb_0}\kb_{-1})$ is a non-trivial sheaf on $\PP^2$ with the additional conditions that $\rk(K')=\deg(K')=0$, because $\rk(K)=\deg(K)=0$.  Hence $\Hom^i(\ko_{\PP^2},K')=0$ if $i\neq 0$ while it is non-trivial if $i=0$.
\end{proof}

\begin{definition}\label{def:mustability}
An object $K\in\coh(\PP^2,\kb_0)$ such that $\mathrm{Forg}(K)$ is torsion-free is called $\mu$-(semi)stable if $\mu(L)<\mu(K)$ (resp.\ $\leq$), for all $0\neq L\mono K$ in $\coh(\PP^2,\kb_0)$ with $\rk(L)<\rk(K)$.
\end{definition}

By repeating literally the standard proofs (see, for example, \cite[Sects. 1.2, 1.3, 1.5, 1.6]{HL}), one easily shows that the $\mu$-stability we defined enjoys
properties (1) and (2). (For a more general treatment, see \cite[Sect.\ 3]{Simpson}.) Moreover, for a sheaf $K$ of $\kb_0$-modules on $\PP^2$, the decomposition in
torsion part $K_{tor}$ and torsion free part $K_{tf}$ is compatible with the $\kb_0$-module structure.

\begin{lem}\label{lem:tfreepart}
Let $K\in\coh(\PP^2,\kb_0)$. Then its torsion and torsion-free part, considered as a sheaf on $\PP^2$, have a natural structure of $\kb_0$-module such that there is an exact sequence in $\coh(\PP^2,\kb_0)$
\[
0\to K_{tor}\to K\to K_{tf}\to 0.
\]
\end{lem}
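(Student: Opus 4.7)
The plan is to verify that the torsion subsheaf $\mathrm{Forg}(K)_{tor}\subseteq\mathrm{Forg}(K)$, a priori only an $\ko_{\PP^2}$-submodule, is in fact stable under the right $\kb_0$-action on $K$. Once this is done, $K_{tor}$ becomes a subobject of $K$ in $\coh(\PP^2,\kb_0)$, the quotient $K_{tf}:=K/K_{tor}$ inherits a natural $\kb_0$-module structure, and its underlying coherent sheaf on $\PP^2$ is $\mathrm{Forg}(K)/\mathrm{Forg}(K)_{tor}$, which is torsion-free by construction.

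The key input is that $\kb_0$ is a sheaf of $\ko_{\PP^2}$-algebras: the structural morphism $\ko_{\PP^2}\to\kb_0$ factors through the center, and the $\ko_{\PP^2}$-module structure on $K$ obtained via $\mathrm{Forg}$ coincides with the restriction of the $\kb_0$-action along this inclusion. On a sufficiently small open subset, take a section $s$ of $K$ with $f\cdot s=0$ for some nonzero $f\in\ko_{\PP^2}$, and let $b$ be any local section of $\kb_0$. Associativity together with the centrality of $f$ gives
\[
f\cdot(s\cdot b)=(s\cdot b)\cdot f=s\cdot(bf)=s\cdot(fb)=(s\cdot f)\cdot b=0,
\]
so $s\cdot b$ is again annihilated by $f$, which shows that $\mathrm{Forg}(K)_{tor}$ is preserved by the $\kb_0$-action. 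In particular, one may define $K_{tor}$ as this submodule, equipped with the restricted action.

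Finally, since the inclusion $K_{tor}\mono K$ is now a morphism in $\coh(\PP^2,\kb_0)$, the cokernel $K_{tf}$ is a well-defined object of $\coh(\PP^2,\kb_0)$ whose underlying sheaf is the torsion-free part of $\mathrm{Forg}(K)$, and the desired exact sequence
\[
0\to K_{tor}\to K\to K_{tf}\to 0
\]
is obtained. There is no substantive obstacle; the entire content of the statement lies in the centrality of $\ko_{\PP^2}$ inside $\kb_0$, which makes the $\ko_{\PP^2}$-torsion subsheaf automatically $\kb_0$-stable.
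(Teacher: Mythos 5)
Your proof is correct and follows essentially the same route as the paper, which simply observes that the action map $K\otimes_{\ko_{\PP^2}}\kb_0\to K$ carries $K_{tor}\otimes_{\ko_{\PP^2}}\kb_0$ into $K_{tor}$; your section-wise computation using the centrality of $\ko_{\PP^2}$ in $\kb_0$ is just a local spelling-out of that same observation. Nothing is missing.
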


\begin{proof}
This can be easily proved by observing that the action $K\otimes_{\ko_{\PP^2}}\kb_0\to K$ maps $K_{tor}\otimes_{\ko_{\PP^2}}\kb_0$ into $K_{tor}$.
\end{proof}

Finally, the $\mu$-stability we have defined enjoys properties (3) and (4).

\begin{lem}\label{lemma:property(3)}
Let $K$ be a torsion-free, $\mu$-(semi)stable sheaf in $\coh(\PP^2,\kb_0)$. Then $K\otimes_{\kb_0}\kb_{-1}$
is $\mu$-(semi)stable and
\begin{equation}\label{eq:muSerre}
\mu(K\otimes_{\kb_0}\kb_{-1})=\mu(K)- \frac{1}{2}.
\end{equation}
\end{lem}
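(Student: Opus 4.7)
The plan is to prove the lemma in three steps: (i) show that $T:=(-)\otimes_{\kb_0}\kb_{-1}$ is an exact autoequivalence of $\coh(\PP^2,\kb_0)$ preserving rank and torsion-freeness; (ii) verify the slope-shift \eqref{eq:muSerre} by reduction to the basis of $\kn(\PP^2,\kb_0)$; (iii) deduce preservation of $\mu$-(semi)stability by pulling back a destabilizing subobject through the inverse of $T$.

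For (i), Proposition \ref{prop:properties}(ii) together with \cite[Cor.\ 3.9]{Kuz2} give $\kb_1\otimes_{\kb_0}\kb_{-1}\cong\kb_0\cong\kb_{-1}\otimes_{\kb_0}\kb_1$, so $\kb_{-1}$ is an invertible $\kb_0$-bimodule with two-sided inverse $\kb_1$. Hence $\kb_{-1}$ is flat on either side and $T$ is an exact autoequivalence of $\coh(\PP^2,\kb_0)$ with quasi-inverse $T^{-1}:=(-)\otimes_{\kb_0}\kb_1$. The computation in step (ii) will show that $T$ maps rank-$4$ generators of $\kn(\PP^2,\kb_0)$ to rank-$4$ classes, hence it preserves $\rk$. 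Torsion-freeness is then preserved since, using Lemma \ref{lem:tfreepart}, any torsion subsheaf $N\subseteq T(K)$ would give a nonzero rank-$0$ subsheaf $T^{-1}(N)\subseteq K$, forcing $N=0$.

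For (ii), note that $[K]\mapsto\deg(T[K])-\deg([K])+\rk([K])/2$ is a $\ZZ$-linear functional on $\kn(\PP^2,\kb_0)$, well-defined because $\rk$ takes values in $4\ZZ$ by Lemma \ref{lem:rankpoints}(i). By Proposition \ref{prop:GrotGroupModules} it suffices to check its vanishing on the basis $\{[\kb_1],[\kb_0],[\kb_{-1}]\}$. The Clifford periodicity $\kb_i\otimes_{\kb_0}\kb_j\cong\kb_{i+j}$ (with $\kb_{2k+1}=\kb_1(kh)$, $\kb_{2k}=\kb_0(kh)$), deduced from \cite[Cor.\ 3.9]{Kuz2}, gives $T(\kb_1)=\kb_0$, $T(\kb_0)=\kb_{-1}$, $T(\kb_{-1})=\kb_0(-h)$. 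The explicit $\ko_{\PP^2}$-module presentations of $\kb_0$ and $\kb_1$ yield degrees $-3,-5,-7,-9$ for $\kb_1,\kb_0,\kb_{-1},\kb_0(-h)$ respectively, each successive drop being exactly $-2=-\rk/2$; all these classes have $\ko_{\PP^2}$-rank $4$, confirming the rank claim used in (i).

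For (iii), suppose $M\hookrightarrow T(K)$ in $\coh(\PP^2,\kb_0)$ with $0<\rk(M)<\rk(T(K))$ and $\mu(M)\geq\mu(T(K))$ (resp.\ $>$). Applying the exact quasi-inverse yields $T^{-1}(M)\hookrightarrow K$, still of strictly smaller rank than $K$. Formula \eqref{eq:muSerre} applied to $T^{-1}$ shifts slope by $+1/2$, giving $\mu(T^{-1}(M))=\mu(M)+1/2\geq\mu(T(K))+1/2=\mu(K)$ (resp.\ $>$), contradicting the $\mu$-(semi)stability of $K$. The only nontrivial point is step (i), namely the exactness and torsion-freeness preservation of $T$ at the abelian level, both consequences of the bimodule invertibility; steps (ii) and (iii) then reduce to a routine numerical check on $\kn(\PP^2,\kb_0)$ and transport of subobjects through an equivalence.
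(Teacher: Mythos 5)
Your proof is correct and follows essentially the same route as the paper: a numerical verification on the basis $\{[\kb_1],[\kb_0],[\kb_{-1}]\}$ of $\kn(\PP^2,\kb_0)$ showing that $-\otimes_{\kb_0}\kb_{-1}$ preserves rank and shifts degree by $-\rk/2$, followed by transporting any destabilizing subobject back through $-\otimes_{\kb_0}\kb_1$. Your step (i), spelling out the bimodule invertibility from \cite[Cor.\ 3.9]{Kuz2} to justify exactness and preservation of torsion-freeness, merely makes explicit what the paper compresses into the citation of Proposition \ref{prop:properties}(ii).
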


\begin{proof}
By Proposition \ref{prop:properties} (ii), $\widetilde{K}:=K\otimes_{\kb_0}\kb_{-1}$ is a torsion free sheaf in $\coh(\PP^2,\kb_0)$.

As in the proof of Lemma \ref{lem:rankpoints}, given $[K]=a[\kb_1]+b[\kb_0]+c[\kb_{-1}]$,
we have $\rk(K)=4(a+b+c)$, $\deg(K)=-3a-5b-7c$, $\rk(\widetilde{K})=4(a+b+c)=\rk(K)$, and $\deg(\widetilde{K})=-5a-7b-9c=\deg(K)-(1/2)\rk(K)$.
From this we immediately deduce \eqref{eq:muSerre}.
Moreover, if $A\mono\widetilde{K}$ is such that $\mu(A)>\mu(\widetilde{K})$ (resp.\ $\geq$), then $A\otimes_{\kb_0}\kb_{1}\mono K$ and $\mu(A\otimes_{\kb_0}\kb_{1})>\mu(K)$ (resp.\ $\geq$), contradicting the $\mu$-(semi)stability of $K$.
Hence $\widetilde{K}$ is $\mu$-(semi)stable, as wanted.
\end{proof}

Notice that, by Lemma \ref{lem:rankpoints} (i), every object in $\coh(\PP^2,\kb_0)$ has rank a multiple of $4$.
Since $\kb_1$ is locally-free of rank $4$ it must be $\mu$-stable, which is precisely property (4).

\section{A stability condition}\label{sec:stab-cond}

In this Section, we construct a stability condition on $\cat{T}_Y$, which we will use in Sections \ref{sect:stabid} and \ref{sect:finalproof} to
reconstruct the Fano surface of lines on $Y$ as a moduli space of objects on $\cat{T}_Y \subset \Db(Y)$. More precisely, we will prove the following.

\begin{thm}\label{thm:tstruct3}
There exists a locally finite stability condition $\sigma:=(Z,\cat{B})$ on $\cat{T}_Y$ such that, for all lines $l\subseteq Y$, the ideal sheaf $I_l$ is contained in the heart $\cat{B}$ of a bounded $t$-structure on $\cat{T}_Y$.
\end{thm}

We first recall Bridgeland's definition of a stability condition in \ref{subsec:4.1}.
Then we will use the description of the numerical
Grothendieck group and the $\mu$-stability on $\Db(\PP^2,\kb_0)$ to get first a $t$-structure and a stability
function on $\Db(\PP^2,\kb_0)$. Restricting this to $\cat{T}_Y$, seen as an admissible subcategory of $\Db(\PP^2,\kb_0)$
and using the description of its numerical Grothendieck group, we get the stability condition.

\subsection{Bridgeland's stability conditions}\label{subsec:4.1}

We recall Bridgeland's definition of the notion of stability condition on a triangulated category.
Let $\cat{T}$ be a triangulated category. A \emph{stability condition} on $\cat{T}$
is a pair $\sigma=(Z,\kp)$ where $Z:K(\cat{T})\to\CC$ is a group
homomorphism and
$\kp(\phi)\subset\cat{T}$ are full additive subcategories,
$\phi\in\RR$, satisfying the following conditions:

(a) If $0\ne C\in\kp(\phi)$, then $Z(C)=m(C)\exp(i\pi\phi)$ for
some $m(C)\in\RR_{>0}$.

(b) $\kp(\phi+1)=\kp(\phi)[1]$ for all $\phi\in\RR$.

(c) If $\phi_1>\phi_2$ and $C_i\in\kp(\phi_i)$, $i=1,2$, then
$\Hom_{\cat{T}}(C_1,C_2)=0$.

(d) Any $0\ne C\in\cat{T}$ admits a \emph{Harder--Narasimhan
filtration} (\emph{HN-filtration} for short) given by a collection
of distinguished triangles $C_{i-1}\to C_i\to A_i$ with $C_0=0$
and $C_n=C$ such that $A_i\in\kp(\phi_i)$ with
$\phi_1>\ldots>\phi_n$.

It can be shown that each subcategory $\mc{P} (\phi)$ is
extension-closed and abelian. Its non-zero objects are called
\emph{semistable} of phase $\phi$, while the objects $A_i$ in (d)
are the \emph{semistable factors} of $C$. The minimal objects of
$\kp(\phi)$ are called \emph{stable} of phase $\phi$ (recall that
a \emph{minimal object} in an abelian category, also called
\emph{simple}, is a non-zero object without proper subobjects or
quotients). A HN-filtration of an object $C$ is unique up to a
unique isomorphism.

For any interval $I \subseteq \RR$, $\mc{P} (I)$ is defined to be
the extension-closed subcategory of $\cat{T}$ generated by the
subcategories $\mc{P} (\phi)$, for $\phi \in I$. Bridgeland proved
that, for all $\phi \in \RR$, $\mc{P} ((\phi, \phi + 1])$ is the
heart of a bounded $t$-structure on $\cat{T}$. The category
$\mc{P} ((0, 1])$ is called the \emph{heart} of $\sigma$.

\begin{remark}\label{rmk:tstruct}
By \cite[Prop.\ 5.3]{Br} giving a stability condition on a triangulated category $\cat{T}$ is equivalent to giving a bounded $t$-structure on $\cat{T}$ with heart $\cat{A}$ and a group homomorphism $Z:K(\cat{A})\to\CC$ such that $Z(C)\in\HH$, for all $0\neq C\in\cat{A}$, and with Harder--Narasimhan filtrations (see \cite[Sect.\ 5.2]{Br}). More precisely, as $\HH:=\{ z\in\CC^*:z=|z|\exp(i\pi\phi), \, 0< \phi \leq 1 \}$, any $0\neq C\in\cat{A}$ has a well-defined \emph{phase} $\phi(C):=\mathrm{arg}(Z(C))\in(0,1]$.
For $\phi\in(0,1]$, an object $0\neq C\in\cat{A}$ is then in $\kp(\phi)$ if and only if, for all $C\epi B\neq 0$ in $\cat{A}$, $\phi=\phi(C)\leq\phi(B)$.
\end{remark}

A stability condition is called \emph{locally-finite} if there
exists some $\epsilon > 0$ such that, for all $\phi\in\RR$, each
(quasi-abelian) subcategory  $\mc{P} ((\phi - \epsilon , \phi +
\epsilon))$ is of finite length. In this case $\mc{P} (\phi)$ has
finite length so that every object in $\mc{P} (\phi)$ has a finite
\emph{Jordan--H\"older filtration} (\emph{JH-filtration} for
short) into stable factors of the same phase. The set of stability
conditions which are locally finite will be denoted by $\Stab
(\cat{T})$.

\subsection{The bounded $t$-structure}\label{sec:tstruct}

In this section, we construct a bounded $t$-structure on $\cat{T}_Y$ in a number of steps:
we first consider a tilting by a torsion pair of $\coh(\PP^2,\kb_0)$,
and then restrict it to $\cat{T}_Y$ under the identification of it with an admissible subcategory. In view of the fact that we want the ideal sheaves of lines $I_l$ to be contained in the heart of our $t$-structure, the specific tilt is suggested by
Example \ref{ex:ideallineblownup}. We record the result as the following

\begin{prop}\label{prop:tstruct2}
There exists a bounded $t$-structure on $\cat{T}_Y$ with heart $\cat{B}$ such that

{\rm (i)} $\cat{B}$ has Ext-dimension equal to $2$, i.e., $\Ext^i(C,\widetilde{C})=0$, for all $C,\widetilde{C}\in\cat{B}$ and for all $i\neq0,1,2$;

{\rm (ii)} $I_l\in\cat{B}$, for all lines $l\subseteq Y$.
\end{prop}

\medskip

\noindent
{\bf Step 1.} (Bridgeland's tilting)
Define a torsion pair on $\coh(\PP^2,\kb_0)$ as follows:
\begin{align*}
\kt_0:=& \left\{ K\in\coh(\PP^2,\kb_0)\,:\,\text{either }K\text{ torsion or }\mu^-(K_{tf})>\mu(\kb_0)=-5/4\right\}\\
\kf_0:=& \left\{ K\in\coh(\PP^2,\kb_0)\,:\,K\text{ torsion free and }\mu^+(K)\leq\mu(\kb_0)=-5/4\right\},
\end{align*}
where $K_{tf}$ is defined by Lemma \ref{lem:tfreepart}, and $\mu^+$ (resp.\ $\mu^-$) denotes the biggest (resp.\ smallest) slope of the factors in a Harder--Narasimhan filtration.

We observe here that, by (3) and (4) of Section \ref{sec:mustability}, $\kb_1\in\kt_0$ and $\kb_0\in\kf_0$.
By \cite{HRS} we get a new bounded $t$-structure on $\Db(\PP^2,\kb_0)$. We denote by $\cat{A}_0$ its heart. Explicitly,
\[
\cat{A}_0=\left\{C\in\Db(\PP^2,\kb_0)\,:\,\begin{array}{l}\bullet\ \kh_{\coh}^i(C)=0,\text{ for all }i\neq0,-1\\ \bullet\ \kh_{\coh}^0(C)\in\kt_0\\ \bullet\ \kh_{\coh}^{-1}(C)\in\kf_0\end{array}\right\},
\]
where $\kh_{\coh}^\bullet$ denotes the cohomology with respect to the $t$-structure with heart $\coh(\PP^2,\kb_0)$.

\medskip

\noindent
{\bf Step 2.} (Ext-dimension of $\cat{A}_0$)
An important property of $\cat{A}_0$ is having Ext-dimension equal to 2 as $\coh(\PP^2,\kb_0)$.
Indeed, for all $C,\widetilde{C}\in\cat{A}_0$, we have $\Hom^{<0}(C,\widetilde{C})=0$, by definition of bounded $t$-structure.
Moreover, by construction and Proposition \ref{prop:properties} (ii), $\Hom^{\geq 4}(C,\widetilde{C})=0$.
Hence we only need to show that $\Hom^3(C,\widetilde{C})=0$.
But an easy computation shows that
\[
\Hom^3(C,\widetilde{C})\cong\Hom^2(\kh_{\coh}^{-1}(C),\kh_{\coh}^0(\widetilde{C}))\cong\Hom(\kh_{\coh}^0(\widetilde{C}),\kh_{\coh}^{-1}(C)\otimes_{\kb_0}\kb_{-1})^\vee.
\]
By property (3) of Section \ref{sec:mustability}, $\kh_{\coh}^{-1}(C)\otimes_{\kb_0}\kb_{-1}\in\kf_0$. Hence
\[
\Hom(\kh_{\coh}^0(\widetilde{C}),\kh_{\coh}^{-1}(C)\otimes_{\kb_0}\kb_{-1})=0,
\]
as wanted.

\medskip

\noindent
{\bf Step 3.} ($\kb_1$-dimension of $\cat{A}_0$)
Another good property of $\cat{A}_0$ is that its $\kb_1$-dimension is equal to 1, that is $\Hom^i(\kb_1,\cat{A}_0)=0$ if $i\neq0,1$.
Indeed, by Step 2, we only need to show that $\Hom^2(\kb_1,\cat{A}_0)=0$.
Let $C\in\cat{A}_0$. Then an easy computation, using Step 2, shows that
\[
\Hom^2(\kb_1,C)\cong\Hom^2(\kb_1,\kh_{\coh}^0(C))\cong\Hom(\kh_{\coh}^0(C),\kb_0)=0,
\]
by definition of torsion pair.

\medskip

\noindent
{\bf Step 4.} (Inducing a bounded $t$-structure on $\cat{T}_Y$)\footnote{This step, which simplifies a previous version of our argument, was suggested to us by A.\ Kuznetsov.}
Set $\cat{B}:=(\sigma_*\circ\Phi')(\cat{A}_0)\cap\cat{T}_Y$. By Step $2$, the Ext-dimension of $\cat{B}$ is $2$. The following result concludes the proof of part (i) of Proposition \ref{prop:tstruct2}.

\begin{lem}\label{lem:kuzsug}
	The category $\cat{B}$ is the heart of a bounded $t$-structure on $\cat{T}_Y$.
\end{lem}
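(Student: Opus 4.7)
The plan is to transport the heart $\cat{A}_0$ from $\Db(\PP^2,\kb_0)$ onto $\cat{T}_Y$ via the functor $\sigma_*\circ\Phi'$. The first observation is that $(\sigma_*\circ\Phi')(\cat{A}_0)\subseteq\cat{T}_Y$ automatically, so the intersection with $\cat{T}_Y$ in the definition of $\cat{B}$ is vacuous: by \eqref{eq:3}, $\Phi'(\Db(\PP^2,\kb_0))=\langle\sigma^*\cat{T}_Y,\ko_{\widetilde{Y}}(h-H)\rangle$, and $\sigma_*$ sends $\sigma^*\cat{T}_Y$ into $\cat{T}_Y$ via $\sigma_*\sigma^*=\id$, while $\sigma_*\ko_{\widetilde{Y}}(h-H)=\sigma_*\ko_{\widetilde{Y}}(-D)=I_{l_0}\in\cat{T}_Y$. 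Hence $\cat{B}$ coincides with the essential image $(\sigma_*\circ\Phi')(\cat{A}_0)$ outright.

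Set $\kt:=(\Phi')^{-1}(\sigma^*\cat{T}_Y)\subset\Db(\PP^2,\kb_0)$. By Proposition \ref{prop:properties}(i), $\kt=\{M:\Hom^\bullet(M,\kb_1)=0\}$, and the restriction $\sigma_*\Phi'|_\kt\colon\kt\xrightarrow{\sim}\cat{T}_Y$ is a triangulated equivalence, corresponding to the semi-orthogonal decomposition $\Db(\PP^2,\kb_0)=\langle\kb_1,\kt\rangle$. The strategy is to show that $\cat{A}_0\cap\kt$ is the heart of a bounded $t$-structure on $\kt$ and that $(\sigma_*\Phi')(\cat{A}_0\cap\kt)=\cat{B}$. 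The identification step relies on Example \ref{ex:ideallineblownup}: since $I_{l_0}=\sigma_*\Phi'(M)$ with $M\in\cat{A}_0\cap\kt$ the nontrivial extension of $\kb_1$ by $\kb_0[1]$, replacing shifts of $\kb_1$ in the SOD decomposition $C_{\kb_1}\to C\to C_\kt$ of $C\in\cat{A}_0$ by the corresponding shifts of $M$ produces an object of $\kt$ with the same image under $\sigma_*\Phi'$ as $C$.

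To see that the $\cat{A}_0$-structure restricts to $\kt$, the condition to check is that for $M\in\kt$ all cohomology objects $\kh^i_{\cat{A}_0}(M)$ lie in $\kt$. I would analyze the spectral sequence
\[
E_2^{p,q}=\Hom^p\!\left(\kh^{-q}_{\cat{A}_0}(M),\kb_1\right)\Longrightarrow\Hom^{p+q}(M,\kb_1)=0.
\]
Step 2 bounds $p\in\{0,1,2\}$ so the sequence collapses at $E_3$, and Step 3 together with Serre duality (Proposition \ref{prop:properties}(ii)) and Lemma \ref{lemma:property(3)} on the slope shift under $\otimes_{\kb_0}\kb_{-1}$ is used to force vanishing of the $p=0$ and $p=2$ columns. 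Boundedness of the induced $t$-structure on $\cat{T}_Y$ then follows from that of $\cat{A}_0$. The main obstacle is this last vanishing step: abutting to zero alone only gives that $d_2\colon E_2^{0,q+1}\to E_2^{2,q}$ is an isomorphism for every $q$, so the $\mu$-stability framework of Section \ref{sec:mustability} must be leveraged to exclude the existence of nontrivial such columns.
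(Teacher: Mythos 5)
Your overall strategy---restrict the $t$-structure with heart $\cat{A}_0$ to the orthogonal complement of $\kb_1$ and transport it through the equivalence with $\cat{T}_Y$---is exactly the paper's, but you have taken the orthogonal on the wrong side, and this is where the argument genuinely fails to close. From $\Phi'(\Db(\PP^2,\kb_0))=\langle\sigma^*\cat{T}_Y,\ko_{\widetilde{Y}}(h-H)\rangle$ and Proposition \ref{prop:properties}(i), the subcategory $\kt=(\Phi')^{-1}(\sigma^*\cat{T}_Y)$ is the \emph{right} orthogonal $\kb_1^{\perp}=\{M\,:\,\Hom^{\bullet}(\kb_1,M)=0\}$, not $\{M\,:\,\Hom^{\bullet}(M,\kb_1)=0\}$. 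The spectral sequence to run is therefore
\[
E_2^{p,q}=\Hom^p(\kb_1,\kh^{q}_{\cat{A}_0}(M))\Longrightarrow\Hom^{p+q}(\kb_1,M)=0.
\]
For this sequence Step 3 (the $\kb_1$-dimension of $\cat{A}_0$ is $1$) confines all nonzero terms to the two columns $p=0,1$, so every differential vanishes, $E_2=E_\infty$, and the vanishing of the abutment kills each $E_2^{p,q}$: every $\kh^q_{\cat{A}_0}(M)$ lies in $\kb_1^{\perp}$ and the $t$-structure restricts. The ``main obstacle'' you flag simply disappears. With your orientation, by contrast, Step 3 gives no information (it controls $\Hom^{\bullet}(\kb_1,\cat{A}_0)$, not $\Hom^{\bullet}(\cat{A}_0,\kb_1)$), all three columns $p=0,1,2$ may be nonzero, and degeneration-to-zero only yields the isomorphisms $d_2\colon E_2^{0,q+1}\to E_2^{2,q}$; Serre duality turns $\Hom^2(A,\kb_1)$ into $\Hom(\kb_1,A\otimes_{\kb_0}\kb_{-1})^{\vee}$, which has no reason to vanish for a general $A\in\kt_0$, so I do not see how the $\mu$-stability results would kill the outer columns. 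And even if they did, you would have shown that the cohomologies stay in ${}^{\perp}\kb_1$, which is not the subcategory identified with $\cat{T}_Y$.

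A secondary problem is the opening reduction. You are right that $\sigma_*\circ\Phi'$ carries all of $\Db(\PP^2,\kb_0)$ into $\cat{T}_Y$ (indeed $\sigma_*\ko_{\widetilde{Y}}(h-H)\cong I_{l_0}$), but the category used in the rest of the paper is $(\sigma_*\circ\Phi')(\cat{A}_0\cap\kt)$, i.e.\ the objects of $\cat{T}_Y$ whose preimage under the equivalence $\kt\simeq\cat{T}_Y$ lies in $\cat{A}_0$; this is what makes the later assertions ``$(\sigma_*\circ\Phi')^{-1}(C)\in\cat{A}_0$ for $C\in\cat{B}$'' meaningful. Your device of replacing the shifts of $\kb_1$ in the decomposition triangle of $C\in\cat{A}_0$ by shifts of the extension from Example \ref{ex:ideallineblownup} is not justified: $\sigma_*\Phi'(\kb_0)$ is not zero, so the modified object need not have the same image, and even when the relevant maps lift the result need not lie in $\cat{A}_0$. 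Once the orthogonal is taken on the correct side none of this is needed: one shows directly that $\cat{A}_0\cap\kb_1^{\perp}$ is the heart of a bounded $t$-structure on $\kb_1^{\perp}$ and transports it.
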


\begin{proof}
We prove the claim under the indentification of $\cat{T}_Y$ with its image $\fu(\cat{T}_Y)$ via the functor $\fu:\cat{T}_Y\to\Db(\PP^2,\kb_0)$.
Recall that $\fu(\cat{T}_Y)=\langle \kb_1 \rangle^{\perp}$.
	Consider the spectral sequence (see, for example, \cite{Okada})
	\begin{equation}\label{eq:specseq}
	E_2^{p,q}:=\bigoplus_i \Hom^p(\kh_{0}^i(C),\kh_{0}^{i+q}(\widetilde{C}))\Longrightarrow\Hom^{p+q}(C,\widetilde{C}),
	\end{equation}
	where the cohomology is taken with respect to $\cat{A}_0$ and $C,\widetilde{C}\in\Db(\PP^2,\kb_0)$.
	When $C=\kb_1$, by Step 3, the sequence degenerates at the second order. This means that, if $\widetilde{C}\in\fu(\cat{T}_Y)$,
then $\kh_0^i(\widetilde{C})$ is in $\fu(\cat{T}_Y)$ as well. This is sufficient to prove the
result.
\end{proof}

\medskip

\noindent
{\bf Step 5.} (Ideal sheaves of lines)
Here we will prove that the ideal sheaves of lines of $Y$ belong to $\cat{B}$, thus completing the proof of Proposition \ref{prop:tstruct2}.
A standard calculation shows that $I_l$ is in $\cat{T}_Y$, for any line $l$ in $Y$, and then it is enough to show that $\cd{I_l}$ belongs to $\cat{A}_0$, again for any line $l$ in $Y$. We consider the three possible cases of $l$ and $l_0$ skew, $l=l_0$ and $l$ intersecting $l_0$ in a point.

If $l$ does not intersect $l_0$, then, by Example \ref{ex:idealsheaves}, $\cd{I_l}\cong j_*(\ke|_l)\otimes\ko_{\PP^2}(h)$,
where $j$ denotes the embedding given by the composition $l\mono \widetilde{Y}\mor[\pi]\PP^2$.
This is a torsion sheaf supported on a line in $\PP^2$.
Hence $\cd{I_l}$ belongs to $\kt_0$ and so to $\cat{A}_0$.

If $l=l_0$, then, by Example \ref{ex:ideallineblownup}, $\cd{I_{l_0}}$ is given by the unique extension $\kb_1\to\kb_0[2]$.
In this case, $\cd{I_{l_0}}$ is in $\cat{A}_0$ since $\kb_1\in\kt_0$ and $\kb_0\in\kf_0$.

To prove Proposition \ref{prop:tstruct2}, we show that, if $l\cap l_0=\{pt\}$, then $\cd{I_l}$ is again a torsion sheaf.
Recall that $\cd{I_l}=(\sigma_* \circ \Phi')^{-1}(I_l)$.
A local computation shows that $\sigma^*I_l$ is a sheaf, sitting in an exact sequence
\[
0\to\sigma^*I_l\to\ko_{\widetilde{Y}}\to\ko_{l\cup\gamma}\to0,
\]
where $\gamma:=\sigma^{-1}(pt)$ and $l$ denotes, by abuse of notation, the strict transform of $l$ inside $\widetilde{Y}$.
Now, by \eqref{eq:interestingformula}, we have
\[
\cd{I_l}\cong\Psi(\sigma^*I_l)\cong\Psi(\ko_{l\cup\gamma})[-1]\cong\pi_*(\ke|_{l\cup\gamma}\otimes\ko_{\widetilde{Y}}(h)).
\]
By using the exact sequence
\[
0\to\ko_\gamma(-h)\to\ko_{l\cup\gamma}\to\ko_{l}\to0,
\]
we have an exact triangle
\[
\pi_*(\ke|_{\gamma})\to\Psi(\sigma^*I_l)\to\pi_*(\ke|_{l}),
\]
where $\pi_*(\ke|_{\gamma})$ is a torsion sheaf supported on a line in $\PP^2$, since $\pi$ is a closed embedding on $\gamma$.
By \eqref{eq:defofke}, the sheaf $\ke|_{l}$ has no higher cohomologies and so $\pi_*(\ke|_{l})$ is a torsion sheaf supported
on a point. As a consequence, $\cd{I_l}\cong\Psi(\sigma^*I_l)$ is a torsion sheaf on $\PP^2$, as we wanted.

\subsection{The stability function}\label{subsect:stabfunct}

We now construct a stability condition on $\cat{T}_Y$, using Remark \ref{rmk:tstruct}.
Define a group homomorphism $Z: K(\PP^2,\kb_0)\to\CC$ as follows:
\[
Z([C])=\rk(C)+i(\deg(C)-\mu(\kb_0)\rk(C)),
\]
where $\rk$ and $\deg$ are defined in Section \ref{sec:mustability}. If $A$ is an object of $\cat{T}_Y$,
we define $Z([A]):=Z([\cd{A}])$. In particular we obtain a group homomorphism $Z:K(\cat{T}_Y)\to\CC$ as follows:
\[
Z([A])=\rk(A)+i(\deg(A)-\mu(\kb_0)\rk(A)),
\]
where $\rk(A)=\rk(\cd{A})$ and $\deg(A)= \deg(\cd{A})$ are the two numerical functions of Section \ref{sec:mustability}.

\begin{lem}\label{lem:stabfunction}
The group homomorphism $Z$ has the property $Z(\cat{B}\setminus\{0\})\subseteq\HH$.
\end{lem}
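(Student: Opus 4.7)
The plan is to leverage the characterization $\cat{B} = (\sigma_* \circ \Phi')(\cat{A}_0) \cap \cat{T}_Y$ and analyze any non-zero $C \in \cat{B}$ through $\tilde{C} := (\sigma_* \circ \Phi')^{-1}(C) \in \cat{A}_0$. By definition of the tilt, $\tilde{C}$ sits in a short exact sequence in $\cat{A}_0$
\[
0 \to \kh_{\coh}^{-1}(\tilde{C})[1] \to \tilde{C} \to \kh_{\coh}^0(\tilde{C}) \to 0,
\]
with $T := \kh_{\coh}^0(\tilde{C}) \in \kt_0$ and $F := \kh_{\coh}^{-1}(\tilde{C}) \in \kf_0$. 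Since $\rk$ and $\deg$ factor through $K(\Db(\PP^2,\kb_0))$, additivity gives
\[
\Im Z(C) = \bigl[\deg(T) - \mu(\kb_0)\rk(T)\bigr] - \bigl[\deg(F) - \mu(\kb_0)\rk(F)\bigr].
\]

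First I would show $\Im Z(C) \geq 0$ by bounding each bracket separately. For $T \in \kt_0$, I would invoke Lemma \ref{lem:tfreepart} to obtain the exact sequence $0 \to T_{tor} \to T \to T_{tf} \to 0$ in $\coh(\PP^2,\kb_0)$. Since $T_{tor}$ has rank zero and non-negative degree, and every Harder--Narasimhan factor of $T_{tf}$ has slope strictly greater than $\mu(\kb_0)$ by definition of $\kt_0$, the first bracket is non-negative; strict positivity fails only when $T_{tf} = 0$ and $T_{tor}$ is $0$-dimensional, i.e., when $T$ is a $0$-dimensional torsion sheaf (or zero). Symmetrically, since $F \in \kf_0$ is torsion-free with all HN slopes at most $\mu(\kb_0)$, the second bracket is non-positive, vanishing exactly when $F = 0$ or $F$ is $\mu$-semistable of slope $\mu(\kb_0)$. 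Hence $\Im Z(C) \geq 0$.

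It remains to rule out $Z(C) \in \RR_{\geq 0}$. If $\Im Z(C) = 0$, both bounds above must be equalities, so $T$ is $0$-dimensional torsion and $F$ is either zero or $\mu$-semistable of slope $\mu(\kb_0)$. Then $\Re Z(C) = \rk(T) - \rk(F) = -\rk(F) \leq 0$, with equality precisely when $F = 0$. In that case $\tilde{C} = T$ is a $0$-dimensional torsion $\kb_0$-module. Here the decisive input enters: by Proposition \ref{prop:properties}(i) combined with the semi-orthogonal decomposition $\Phi'(\Db(\PP^2,\kb_0)) = \langle \sigma^*\cat{T}_Y, \ko_{\widetilde{Y}}(h-H)\rangle$, belonging to $\cat{T}_Y$ translates into the vanishing $\Hom^\bullet(\kb_1, \tilde{C}) = 0$; but Lemma \ref{lem:rankpoints}(iii) forces $\Hom(\kb_1, T) \neq 0$ for any non-zero $0$-dimensional torsion $T$. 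Thus $T = 0$, contradicting $C \neq 0$, and we conclude $\Re Z(C) < 0$, so $Z(C) \in \HH$.

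The main obstacle is the last step: without the clean identification of $\cat{T}_Y$ with the right orthogonal to $\kb_1$ inside $\Db(\PP^2,\kb_0)$ together with the non-vanishing assertion of Lemma \ref{lem:rankpoints}(iii), spurious $0$-dimensional torsion objects in $\cat{A}_0$ could land on the positive real axis or even at the origin, breaking the requirement $Z(\cat{B}\setminus\{0\})\subseteq\HH$. The remainder of the argument is routine bookkeeping with Harder--Narasimhan filtrations tuned to the torsion pair $(\kt_0,\kf_0)$.
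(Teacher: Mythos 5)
Your proof is correct and follows essentially the same route as the paper: split $(\sigma_*\circ\Phi')^{-1}(C)$ via the torsion pair $(\kt_0,\kf_0)$, bound the two contributions to $\Im Z$ separately, and use Lemma \ref{lem:rankpoints}~(iii) together with the identification of $(\sigma_*\circ\Phi')^{-1}(\cat{T}_Y)$ with $\kb_1^{\perp}$ to exclude the degenerate point-supported case. If anything, your version is slightly more careful than the paper's, since you invoke Lemma \ref{lem:rankpoints}~(iii) only when the whole object equals its zero-dimensional $\kh^0_{\coh}$, where membership in $\cat{T}_Y$ is actually guaranteed.
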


\begin{proof}
By the definition of $\cat{B}$ in Step 4 of the proof of Proposition \ref{prop:tstruct2}, if $C\in\cat{B}$ is non-zero, then its image in $\Db(\PP^2,\kb_0)$ fits into an exact triangle
\[
\kh_{\coh}^{-1}(\cd{C})[1]\to\cd{C}\to\kh_{\coh}^0(\cd{C}),
\]
where the cohomology is taken with respect to $\coh(\PP^2,\kb_0)$.
Then it will be sufficient to prove that $Z(\kh_{\coh}^0(\cd{C}))$ and $Z(\kh_{\coh}^{-1}(\cd{C})[1])$ have the required property.
But $\kh_{\coh}^0(\cd{C})$ is, by definition, in $\kt_0$. Hence
\[
\deg(\kh_{\coh}^0(\cd{C}))-\mu(\kb_0)\rk(\kh_{\coh}^0(\cd{C}))>0.
\]
Indeed, if $\rk(\kh_{\coh}^0(\cd{C}))>0$, then $\mu(\kh_{\coh}^0(\cd{C}))>\mu(\kb_0)$.
By Lemma \ref{lem:rankpoints} (iii), sheaves with torsion supported on points do not belong to
$\fu(\cat{T}_Y)=\langle \kb_1 \rangle^{\perp}$.
Hence, if $\rk(\kh_{\coh}^0(\cd{C}))=0$, then $\deg(\kh_{\coh}^0(\cd{C}))>0$.

Similarly, $\kh_{\coh}^{-1}(\cd{C})\in\kf_0$. Hence
\[
\deg(\kh_{\coh}^{-1}(\cd{C})[1])-\mu(\kb_0)\rk(\kh_{\coh}^{-1}(\cd{C})[1])\geq0.
\]
If equality holds, that is $\mu(\kh_{\coh}^{-1}(\cd{C}))=\mu(\kb_0)$, then $\rk(\kh_{\coh}^{-1}(\cd{C})[1])<0$, as wanted.
\end{proof}

\begin{remark}\label{rem:realslope}
Note that the proof of the lemma above shows in particular that any object $A$ of $\cat{B}$
of real slope is in fact of the form $\tilde{A}[1]$, where $\tilde{A}$ is $\mu$-semistable
sheaf of slope $\mu(\kb_0)$.
\end{remark}

\begin{prop}\label{lem:stability}
The pair $(Z,\cat{B})$ defines a locally finite stability condition $\sigma$ on $\cat{T}_Y$.
\end{prop}

\begin{proof}
We need to show that $\sigma$ has Harder--Narasimhan filtrations and is locally-finite.
But the locally-finiteness condition is obvious, since the image of $Z$ is a discrete subgroup of $\CC$ (see \cite[Sect.\ 5]{Br}).

The existence of Harder--Narasimhan filtrations can be proved using the same ideas as in the proof of \cite[Prop.\ 7.1]{Br1}.
By the criterion of \cite[Prop.\ 2.4]{Br}, this amounts to showing that $\cat{B}$ has no infinite
sequence of subojects with with increasing phase or infinite sequence of quotients with
decreasing phase.

For $G\in\cat{B}$, denote $f(G):=\Im(Z(G))\geq 0$. Clearly $f$ is additive and, if
\[
0\to A\to C\to B\to 0
\]
is an exact sequence in $\cat{B}$, then $f(A)\leq f(C)$ and $f(B)\leq f(C)$. With this in mind, let
\[
\ldots \subseteq C_{j+1}\subseteq C_j\subseteq\ldots\subseteq C_1\subseteq C_0=C
\]
be an infinite sequence in $\cat{B}$ of subobjects of $C$ with $\phi(C_{j+1})>\phi(C_j)$, for all $j$.
Since $f$ is discrete, there exists $N \in \NN$ such that
$0\leq f(C_n)=f(C_{n+1})$, for all $n \geq N$.
Consider the exact sequence in the category $\cat{B}$
$$0\mor C_{n+1}\mor C_n\mor G_{n+1}\mor 0.$$
By the additivity of $f$ we have $f(G_{n+1})=0$, for all $n\geq N$.
But this yields $\phi(G_{n+1})=1$, for all $n\geq N$ and so $\phi(C_{n+1})\leq\phi(C_n)$.
This contradicts our assumptions and so, the property (a) of \cite[Prop.\ 2.4]{Br} is verified.

Now let
\[
C=C_0\twoheadrightarrow C_1\twoheadrightarrow\ldots\twoheadrightarrow C_j\twoheadrightarrow C_{j+1}\twoheadrightarrow\ldots
\]
be an infinite sequence in $\cat{B}$ of quotients of $C$ with $\phi(C_j)>\phi(C_{j+1})$, for all $j$.
As before, $f(C_n)=f(C_{n+1})$, for all $n\geq N$.
Consider the exact sequence in $\cat{B}$
\[
0\to G_n\to C_N\to C_n\to 0,
\]
for $n\geq N$. Then $f(G_n)=0$.

Assume we can produce a short exact sequence
\begin{equation}\label{eqn:seq}
0\to A\to C_N\to B\to 0
\end{equation}
in $\cat{B}$ with $A\in\kp(1)$ and $\Hom(P,B)=0$, for all $P\in\kp(1)$, where
\[
\kp(1):=\left\{M\in\cat{B}\,:\,f(M)=0\right\}.
\]
Then $f(G_n)=0$ yields $G_n\in\kp(1)$, for all $n\geq N$.
Since $\Hom(G_n,B)=0$, $G_n\hookrightarrow C_N$ factorizes through $G_n\hookrightarrow A$.
Hence
\[
0=G_N\subseteq G_{N+1}\subseteq\ldots\subseteq G_n\subseteq\ldots\subseteq A
\]
is an increasing sequence of subobjects of $A$ in $\kp(1)$. By the proof of Lemma \ref{lem:stabfunction}, $\kp(1)$ is of finite length.
Consider the equivalence $\fu(\kp(1))\cong\kp(1)$ induced by $\cat{T}_Y\cong\fu(\cat{T}_Y)$.
The objects of $\fu(\kp(1))$ are shifts of $\mu$-semistable torsion-free sheaves with slope $\mu(\kb_0)$. Thus the previous increasing sequence cannot exist and this proves property (b) of \cite[Prop.\ 2.4]{Br} and so our result.

We only need to prove the existence of the exact sequence \eqref{eqn:seq}, which we do for $M\in\cat{B}$. Then we can write it as
\[
\kh_{\coh}^{-1}(\cd{M})[1]\to\cd{M}\to\kh_{\coh}^0(\cd{M}).
\]
By the description of the objects in $\fu(\kp(1))$ mentioned above,
$$\Hom(\fu(\kp(1)),\cd{M})\cong\Hom(\fu(\kp(1)),\kh_{\coh}^{-1}(\cd{M})[1]).$$
Consider a Harder--Narasimhan filtration of $\kh_{\coh}^{-1}(\cd{M})$ in $\mu$-semistable sheaves.
If we have $\mu^+(\kh_{\coh}^{-1}(\cd{M}))<\mu(\kb_0)$, then $\Hom(\fu(\kp(1)),\cd{M})=0$, and we can take $A=0$ in \eqref{eqn:seq}.
If $\mu^+(\kh_{\coh}^{-1}(\cd{M}))=\mu(\kb_0)$, then we can take for $A$ (or better for $\cd{A}$) the biggest $\mu$-semistable
subobject of $\kh_{\coh}^{-1}(\cd{M})$ in $\coh(\PP^2,\kb_0)$ with slope $\mu(\kb_0)$ and which belongs to $\fu(\cat{T}_Y)$.
Notice that its existence is ensured by the noetherianity of $\coh(\PP^2,\kb_0)$. The claim is proved.
\end{proof}

This concludes the proof of Theorem \ref{thm:tstruct3}. Indeed, the stability condition mentioned in the statement is precisely the pair $(Z,\cat{B})$ studied in Lemma \ref{lem:stability}. Moreover, by Proposition \ref{prop:tstruct2} (ii), the ideal sheaves of lines in $Y$ are contained in $\cat{B}$.

\begin{cor}\label{cor:stab}
If $Y$ is a cubic threefold, then $\Stab(\Db(Y))$ is non-empty.
\end{cor}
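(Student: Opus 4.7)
The plan is to extend the stability condition $\sigma=(Z,\cat{B})$ on $\cat{T}_Y$ produced in Lemma~\ref{lem:stability} to the full triangulated category $\Db(Y)$, exploiting the semi-orthogonal decomposition $\Db(Y)=\langle\cat{T}_Y,\ko_Y,\ko_Y(1)\rangle$. This follows the standard recipe for gluing stability conditions along a semi-orthogonal decomposition whose extra pieces are generated by exceptional objects.

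First, I would choose integers $m_2\gg m_1\gg 0$ such that the shifted objects $\ko_Y[m_2]$, $\ko_Y(1)[m_1]$ and all objects of $\cat{B}$ have pairwise vanishing $\Hom^{<0}$ in every ordering compatible with the decomposition. Since $\cat{B}$ has finite Ext-dimension by Theorem~\ref{thm:tstruct2}, all relevant Hom-groups $\Hom^\bullet(\ko_Y(i),\cat{B})$ and $\Hom^\bullet(\cat{B},\ko_Y(i))$ are concentrated in a bounded range of degrees, so large enough shifts suffice. Then the extension closure $\cat{A}:=\langle\ko_Y[m_2],\ko_Y(1)[m_1],\cat{B}\rangle$ is the heart of a bounded $t$-structure on $\Db(Y)$ by the standard gluing construction of Beilinson.

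Next, I would extend $Z$ to a group homomorphism $\widetilde{Z}\colon K(\Db(Y))\to\CC$ by assigning values in the open upper half-plane to $[\ko_Y[m_2]]$ and $[\ko_Y(1)[m_1]]$, chosen so that their phases are strictly smaller than the phases of all non-zero objects of $\cat{B}$. Since the image of $Z$ lies in a discrete subgroup of $\CC$, such values exist and ensure $\widetilde{Z}(\cat{A}\setminus\{0\})\subseteq\HH$. By Remark~\ref{rmk:tstruct}, it then remains to verify the Harder--Narasimhan property, which follows by repeating the argument of Lemma~\ref{lem:stability}: the image of $\widetilde{Z}$ still lies in a discrete rank-$3$ subgroup of $\CC$, so the criterion \cite[Prop.\ 2.4]{Br} applies, and local finiteness is then automatic.

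The main obstacle is arranging the finitely many Hom-vanishings required for the $t$-structure gluing simultaneously with the correct phase ordering for the central charge; both conditions are controlled by the finite Ext-dimension of $\cat{B}$ and the discreteness of the image of $Z$, and both can be met by choosing $m_1,m_2$ sufficiently large. Alternatively, one could directly appeal to the general gluing theorem of Collins--Polishchuk to combine $\sigma$ with the trivial stability conditions on $\langle\ko_Y\rangle$ and $\langle\ko_Y(1)\rangle$.
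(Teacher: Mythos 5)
Your proposal is correct and follows essentially the same route as the paper: the paper also glues the stability condition of Lemma \ref{lem:stability} with trivial stability conditions on the exceptional pieces, shifting $\ko_Y$ (and then $\ko_Y(1)$) far enough to kill the negative Homs, and invokes Collins--Polishchuk \cite[Prop.\ 3.3]{CP} --- exactly the alternative you mention at the end --- doing the gluing in two successive steps rather than all at once. The only small caveat is that the uniform bound on $\Hom^{\bullet}(\cat{B},\ko_Y)$ comes from $\cat{B}$ lying in a bounded window of the standard $t$-structure on $\Db(Y)$, not from the Ext-dimension statement of Theorem \ref{thm:tstruct2}.
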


\begin{proof}
Take the subcategory $\cat{D}:=\langle\cat{T}_Y,\ko_Y\rangle$ of $\Db(Y)=\langle \cat{T}_Y,\ko_Y,\ko_Y(1)\rangle$.
By Proposition \ref{lem:stability}, there exists a locally finite stability condition $\sigma$ on $\cat{T}_Y$.
By construction, there exists an integer $i$ such that $\Hom^{\leq i}(B,\ko_Y)=0$, for all $B\in\cat{B}$. Define
on the subcategory $\langle\ko_Y[i]\rangle$ a locally finite stability condition such that $\ko_Y[i]$ is in its heart
and has phase $1$. In this way, the hypotheses of \cite[Prop.\ 3.3]{CP} are satisfied and this means that
$\Stab(\cat{D})\neq\emptyset$. Now repeat the same argument for $\Db(Y)=\langle\cat{D},\ko_Y(1)\rangle$.
\end{proof}

\section{Stability of ideal sheaves of lines}\label{sect:stabid}

Let $Y$ be a cubic threefold and consider the stability condition $\sigma=(Z,\cat{B})$ on $\cat{T}_Y$ defined above (see Proposition \ref{lem:stability}). This section is devoted to the proof of the following result.

\begin{thm}\label{thm:main}
All ideal sheaves of lines in $Y$ are $\sigma$-stable and they are the only $\sigma$-stable objects in $\cat{B}$ in their numerical class.
\end{thm}

As the proof of this result is somewhat involved and will be the subject of the whole section, we briefly sketch our argument here. In Section \ref{subsec:stab-ideal} we prove that they are stable in the above mentioned stability condition. This follows from a more general result (see Proposition \ref{prop:Mukai}) saying that an object $C$ of $\cat{T}_Y$ is stable in this stability condition as soon as $\Hom(C,C)$ is $2$-dimensional.

We finally show in Section \ref{subsec:4.3} that the ideal sheaves of lines are the only stable objects in $\cat{B}$ in their numerical class. This is the result of a long calculation boiling down to showing that if there is a stable object $A\in\cat{B}$ with the same numerical class as the ideal sheaf $I_l$ of a line, then $A$ (or rather its image via an appropriate functor) is torsion-free and $\ch(A)=\ch(I_l)$.

\subsection{The ideal sheaves of lines are $\sigma$-stable}\label{subsec:stab-ideal}

In this section we prove the stability of special objects of $\cat{T}_Y$.
In particular this implies that for any line $l$ in $Y$, the ideal sheaf $I_l$ is $\sigma$-stable.

\begin{prop}\label{prop:Mukai}
Let $C\in\cat{T}_Y$ be such that $\Hom^1(C,C)\cong\CC^2$.
Then $C$ is $\sigma$-stable.
\end{prop}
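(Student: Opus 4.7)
The plan is to argue by contradiction using a Mukai-style estimate, exploiting the fact that the heart $\cat{B}$ has Ext-dimension $2$ (Theorem~\ref{thm:tstruct2}(i)).

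First, since both the hypothesis $\Hom^1(C,C)\cong\CC^2$ and the conclusion of $\sigma$-stability are invariant under shift, I would replace $C$ by a suitable shift and thereby assume $C\in\cat{B}$. Next, suppose for contradiction that $C$ is not $\sigma$-stable. Then $C$ fits into a non-trivial short exact sequence
\[
0\to A\to C\to B\to 0
\]
in $\cat{B}$ with $A,B\neq 0$. Two sub-cases arise: (a) if $C$ is not $\sigma$-semistable, take $A$ to be the first Harder--Narasimhan factor, so that $\phi(A)>\phi^+(B)$ and in particular $\Hom(A,B)=0$ by the stability axiom; (b) if $C$ is semistable but not stable, take $A$ to be a stable Jordan--H\"older subobject of the same phase as $C$, so that $A$ is simple in $\kp(\phi(C))$ and $B$ is semistable of the same phase.

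The main step is to apply the three functors $\Hom(-,C)$, $\Hom(A,-)$, $\Hom(B,-)$ to the sequence above and to assemble the resulting long exact sequences; because $\cat{B}$ has Ext-dimension~$2$, each of these truncates at $\Hom^2$. A direct diagram chase should then yield a Mukai-type inequality of the shape
\[
\hom^1(C,C) \;\geq\; \hom^1(A,A)+\hom^1(B,B)+\hom(A,B)+\hom(B,A)-\varepsilon,
\]
where $\varepsilon$ is a correction involving the $\hom^0$ and $\hom^2$ of $A$ and $B$ and is bounded above via the Euler-characteristic identity $\chi(C,C)=\chi(A,A)+\chi(B,B)+\chi(A,B)+\chi(B,A)$. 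Together with $\hom(A,A),\hom(B,B)\ge 1$ and the fact that the destabilizing extension forces at least one connecting datum to be nonzero, this should give $\hom^1(C,C)\ge 3$, contradicting the hypothesis.

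The main obstacle I anticipate is case (b): there both $\Hom(A,B)$ and $\Hom(B,A)$ can be non-zero simultaneously, and the inequality above must be tracked with more care. A clean route is to reduce, via the Jordan--H\"older filtration, to the extreme sub-case in which $A\cong B$ is stable and appears with multiplicity at least two, and to exhibit by hand a $3$-dimensional subspace of $\Hom^1(C,C)$ coming from the self-extensions of $A$ together with the class of the filtration. Alternatively, one may invoke Lemma~\ref{prop:-1classes} together with the additivity of $\chi$ on $\kn(\cat{T}_Y)$ to rule out, numerically, any decomposition $[C]=[A]+[B]$ compatible with $\hom^1(C,C)=2$; this is the way in which the special shape of the numerical Grothendieck group of $\cat{T}_Y$ (Proposition~\ref{prop:GrotGroup}) enters the argument in an essential way.
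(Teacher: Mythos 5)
Your outline captures the skeleton of the actual argument --- pass to the heart, argue by contradiction against a destabilizing sequence via a Mukai-type estimate using the Ext-dimension $2$ of $\cat{B}$, and feed in the negativity of $\chi$ on $\kn(\cat{T}_Y)$ --- but the two steps you treat as routine are exactly where the content lies, and neither goes through as written. First, the reduction to $C\in\cat{B}$ is not a consequence of shift-invariance: a priori $C$ has several non-zero cohomology objects with respect to the $t$-structure of Theorem~\ref{thm:tstruct2}. One has to run the spectral sequence $E_2^{p,q}=\bigoplus_i\Hom^p(\kh_{\cat{B}}^i(C),\kh_{\cat{B}}^{i+q}(C))\Rightarrow\Hom^{p+q}(C,C)$ (whose $E_2^{1,0}$-term survives because $\cat{B}$ has Ext-dimension $2$) against the observation that every non-zero $D\in\cat{B}$ satisfies $\hom^1(D,D)\ge2$ (as $\chi(D,D)\le-1$ by Proposition~\ref{prop:GrotGroup}(ii) while $\hom^0\ge1$ and $\hom^2\ge0$); this forces $C$ to have a single non-zero cohomology. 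You need to supply this.

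The serious gap is the Mukai inequality itself. For $0\to A\to C\to B\to0$ in $\cat{B}$ with $\phi(A)>\phi(B)$, the estimate $\hom^1(A,A)+\hom^1(B,B)\le\hom^1(C,C)$ is \emph{not} formal: the diagram chase (equivalently, the survival of the $E_2^{1,0}$-term of the spectral sequence for the two-step filtration of $C$) is obstructed by $\Hom^2(B,A)$. In a $2$-Calabi--Yau category this group is dual to $\Hom(A,B)=0$, but $\cat{T}_Y$ is Calabi--Yau of dimension $5/3$, so the vanishing must be proved, and it cannot be ``bounded above via the Euler-characteristic identity'' --- $\chi$ only controls alternating sums, not individual $\hom^2$'s. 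This is where all the specific structure enters, none of which appears in your proposal: by Lemma~\ref{lem:rankpoints}(ii) the function $\Im Z$ takes even non-negative values on $\cat{B}$, so (after disposing separately of the class $-[S^2_{\cat{T}_Y}(I_l)]$, for which $Z\in\RR_{<0}$ and semistability is automatic) $\Im Z(C)=2$ forces the destabilizing subobject to be $A=\widetilde A[1]$ with $\widetilde A$ a $\mu$-semistable sheaf of slope $\mu(\kb_0)$, whence $\Hom^2(B,A)\cong\Hom(\widetilde A,\kh_{\coh}^{-1}(B)\otimes_{\kb_0}\kb_{-1})^\vee=0$ by the slope shift of Lemma~\ref{lemma:property(3)}. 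Your fallback of excluding decompositions $[C]=[A]+[B]$ purely numerically also fails: $[I_l]=\bigl(-[S^2_{\cat{T}_Y}(I_l)]\bigr)+[S_{\cat{T}_Y}(I_l)]$, with $Z=-4$ (phase $1$) and $Z=4+2i$ (phase $<1/2$) respectively, is a numerically admissible destabilization of $I_l$ by classes of square $-1$, and it is precisely the configuration that the $\Hom^2$-vanishing above is needed to rule out.
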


\begin{cor}\label{cor:stabid}
Let $l\subseteq Y$ be any line. Then $I_l$ is $\sigma$-stable.
\end{cor}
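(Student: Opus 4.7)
The plan is to reduce this to Proposition~\ref{prop:Mukai}: once we know $\Hom^1(I_l, I_l)\cong\CC^2$, that proposition immediately delivers the $\sigma$-stability of $I_l$. Since $\cat{T}_Y$ is a full subcategory of $\Db(Y)$, the Ext-group in question may be computed in $\Db(Y)$, so the whole task reduces to a deformation-theoretic calculation.

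First I would carry out the standard identification of $\Ext^1(I_l, I_l)$ with the space of first-order embedded deformations of $l$ in $Y$. The starting point is the tautological sequence
\[
0\to I_l\to\ko_Y\to\ko_l\to 0
\]
together with the conormal identification $I_l/I_l^2\cong N_{l/Y}^\vee$, which holds because $l$ is a local complete intersection in $Y$. Using the trace decomposition for ideal sheaves of LCI subschemes together with the Fano vanishing $H^1(\ko_Y)=0$, one obtains
\[
\Ext^1(I_l,I_l)\cong H^0(l, N_{l/Y}).
\]

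Second, I would invoke the classical theorem (Fano, Altman--Kleiman, Clemens--Griffiths) that the Fano surface of lines $F(Y)$ of a smooth cubic threefold is everywhere smooth of dimension $2$. Since $F(Y)$ is the component of the Hilbert scheme of $Y$ containing $[l]$, its Zariski tangent space at $[l]$ is precisely $H^0(l, N_{l/Y})$; hence $\dim H^0(l, N_{l/Y})=2$ for every line $l\subseteq Y$, without any special cases. Combining the two steps gives $\dim \Hom^1(I_l, I_l)=2$ and Proposition~\ref{prop:Mukai} concludes.

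I do not expect any real obstacle in this argument, since both ingredients are standard. The one point requiring a moment of care is the trace splitting: one should check that the trace map $\Ext^1(I_l,I_l)\to H^1(\ko_Y)=0$ has the expected kernel $H^0(N_{l/Y})$ for the ideal sheaf of a curve in a threefold. This is a short diagram chase using the local-to-global spectral sequence for $\sHom(I_l,I_l)$ together with the conormal identification, and can alternatively be avoided entirely by working directly with the Hilbert-scheme tangent space $\Hom(I_l,\ko_l)=H^0(N_{l/Y})$ and comparing with $\Ext^1(I_l,I_l)$ via the long exact sequence obtained by applying $\Hom(I_l,-)$ to the tautological triangle.
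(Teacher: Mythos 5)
Your proposal is correct and is exactly the argument the paper intends: the corollary is stated as ``an immediate application'' of Proposition \ref{prop:Mukai}, the implicit input being the classical fact that $\hom^1(I_l,I_l)=\dim H^0(l,N_{l/Y})=2$ because the Fano surface of lines of a smooth cubic threefold is smooth of dimension $2$. You merely spell out the deformation-theoretic identification that the paper leaves to the reader.
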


The section is entirely dedicated to the proof of Proposition \ref{prop:Mukai}.
Let us start with two lemmas.

\begin{lem}\label{lem:nonexistence}
There exists no non-zero object $C\in\cat{B}$ with either $\Hom^1(C,C)=0$ or $\Hom^1(C,C)\cong\CC$.
\end{lem}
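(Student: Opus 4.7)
The plan is to combine the Ext-dimension bound on $\cat{B}$ with the (essentially) negative-definite Euler form on $\kn(\cat{T}_Y)$. Since $\cat{B}$ has Ext-dimension $2$ by Theorem \ref{thm:tstruct2}(i), for any $C\in\cat{B}$ one has
$$\chi(C,C)=\hom(C,C)-\hom^1(C,C)+\hom^2(C,C),$$
so it will suffice to prove the inequality $\chi(C,C)\leq -1$ for every $0\neq C\in\cat{B}$: indeed, given such a bound,
$$\hom^1(C,C)=\hom(C,C)+\hom^2(C,C)-\chi(C,C)\geq 1+0+1=2.$$

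To obtain the bound on $\chi(C,C)$, I would read off the Euler pairing from Proposition \ref{prop:GrotGroup}(ii). Writing $[C]=a[I_l]+b[S_{\cat{T}_Y}(I_l)]$ in the basis given there and using the four entries of the displayed matrix yields
$$\chi(C,C) \;=\; -a^2 - ab - b^2 \;=\; -(a^2+ab+b^2).$$
The real quadratic form $a^2+ab+b^2=(a+b/2)^2+3b^2/4$ is positive definite, so for any integer pair $(a,b)\neq(0,0)$ one has $a^2+ab+b^2\geq 1$, and hence $\chi(C,C)\leq -1$ as soon as $[C]\neq 0$ in $\kn(\cat{T}_Y)$.

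The only thing left to rule out is the possibility $[C]=0$ for some $0\neq C\in\cat{B}$. The stability function $Z$ constructed in Section \ref{subsec:4.2} depends only on $\rk$ and $\deg$ and therefore factors through $\kn(\cat{T}_Y)$; so $[C]=0$ would force $Z(C)=0$. But Lemma \ref{lem:stabfunction} says $Z(\cat{B}\setminus\{0\})\subseteq\HH$, so $Z(C)\neq 0$ for every non-zero $C\in\cat{B}$, giving $[C]\neq 0$ and closing the argument.

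There is no real obstacle in this proof: once one recognises that the Euler form on $\kn(\cat{T}_Y)$ is (minus) the standard $A_2$-type form and that the heart $\cat{B}$ has Ext-dimension two, the statement collapses to a one-line inequality. The mildly subtle ingredient is the verification that objects of $\cat{B}$ cannot have vanishing numerical class, which is what forces the quadratic form to be evaluated on a non-zero integer vector and thereby provides the discrete gap $\chi(C,C)\leq -1$.
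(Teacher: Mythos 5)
Your proof is correct and follows essentially the same route as the paper: read off $\chi(C,C)=-(a^2+ab+b^2)\leq -1$ from the Euler form of Proposition \ref{prop:GrotGroup}(ii) and combine it with the Ext-dimension bound of Theorem \ref{thm:tstruct2}(i). The only difference is that you explicitly rule out $[C]=0$ via Lemma \ref{lem:stabfunction}, a point the paper leaves implicit; this is a legitimate small refinement rather than a change of method.
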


\begin{proof}
This follows easily from Proposition \ref{prop:GrotGroup} (ii): indeed, $\chi(C,C)\leq-1$.
Hence, since the Ext-dimension of $\cat{B}$ is equal to 2, if either $\Hom^1(C,C)=0$ or $\Hom^1(C,C)\cong\CC$, then $\chi(C,C)\geq0$, a contradiction.
\end{proof}

\begin{lem}\label{lem:Mukai}
Let $C\in\cat{T}_Y$ be such that $\Hom^1(C,C)\cong\CC^2$.
Then, up to shift, $C$ belongs to $\cat{B}$.
\end{lem}

\begin{proof}
Consider again the spectral sequence \eqref{eq:specseq}
\begin{equation}\label{eqn:sp1000}
E_2^{p,q}:=\bigoplus_i \Hom^p(\kh_{\cat{B}}^i(C),\kh_{\cat{B}}^{i+q}(C))\Longrightarrow\Hom^{p+q}(C,C),
\end{equation}
where the cohomology is taken with respect to $\cat{B}$.
Since the Ext-dimension of $\cat{B}$ is equal to $2$ (see Proposition \ref{prop:tstruct2}), then the $E_2^{1,q}$ terms of the spectral sequence survive.
In particular, for $q=0$ we have
\[
2=\hom^1(C,C)\geq\sum_i\hom^1(\kh_{\cat{B}}^i(C),\kh_{\cat{B}}^i(C)).
\]
But, by Lemma \ref{lem:nonexistence},
\[
\sum_i\hom^1(\kh_{\cat{B}}^i(C),\kh_{\cat{B}}^i(C))\geq2r,
\]
where $r\geq1$ is the number of non-zero cohomologies of $C$.
Hence $r=1$, as wanted.
\end{proof}

By Lemma \ref{lem:Mukai}, we can assume that any $C$ as in Proposition \ref{prop:Mukai} is in $\cat{B}$. Since by Proposition \ref{prop:tstruct2} (i) the $\Ext$-dimension of $\cat{B}$ is $2$ and $\chi(C,C)\leq -1$, then necessarily $\chi(C,C)=-1$. By Lemma \ref{prop:-1classes}, $[C]$ is either $[I_l]$, or $[S_{\cat{T}_Y}(I_l)]$, or $-[S^2_{\cat{T}_Y}(I_l)]$. In particular, the class of $[C]$ is primitive and to prove Proposition \ref{prop:Mukai} it suffices to show that $C$ is $\sigma$-semistable.

If $[C]=-[S^2_{\cat{T}_Y}(I_l)]$, then $Z(C)=Z(I_l)-Z(S_{\cat{T}_Y}(I_l))\in\RR_{<0}$ and so it is $\sigma$-semistable. Otherwise, $\Im(Z(C))=2$ and the $\sigma$-semistability follows from the lemma below.

\begin{lem}\label{lem:}
	Let $C\in\cat{B}$ be such that $\Hom^1(C,C)\cong\CC^2$ and $\Im(Z(C))=2$.
	Then $C$ is $\sigma$-semistable.
\end{lem}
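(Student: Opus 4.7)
The plan is to argue by contradiction: assuming $C$ is not $\sigma$-semistable, I will produce a very specific destabilizing subobject of $C$ and then show that its existence forces $\hom^1(C,C)\geq 3$, contradicting the hypothesis.

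First, I will pin down $[C]$ using the numerical data. Since $\cat{B}$ has Ext-dimension $2$ by Theorem~\ref{thm:tstruct2}(i), the condition $\hom^1(C,C)=2$ forces $\chi(C,C)\leq -1$. A direct computation from Proposition~\ref{prop:GrotGroupModules} together with $\rk$ and $\deg$ from Section~\ref{sec:mustability} gives $Z([I_l])=2i$ and $Z([S_{\cat{T}_Y}(I_l)])=4+2i$, so $\Im Z$ takes values in $2\ZZ$ on $\kn(\cat{T}_Y)$. Hence the hypothesis $\Im(Z(C))=2$, combined with Lemma~\ref{prop:-1classes}, forces $[C]\in\{[I_l],[S_{\cat{T}_Y}(I_l)]\}$, $\chi(C,C)=-1$, $\hom^0(C,C)=1$, and $\hom^2(C,C)=0$.

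Second, I will constrain the shape of a hypothetical HN-filtration of $C$ with $\sigma$-semistable factors $A_1,\dots,A_n$ of strictly decreasing phases. Since each $\Im Z(A_i)\in 2\ZZ_{\geq 0}$ and $\sum_i\Im Z(A_i)=2$, exactly one factor has nonzero imaginary part, while all others have $\Im Z=0$ and thus phase equal to $1$. Strict decrease of the HN-phases now forces $n\leq 2$, and a destabilization of $C$ produces an exact sequence in $\cat{B}$
\[
0\to A\to C\to B\to 0,
\]
with $A\in\kp(1)$ and $B$ $\sigma$-semistable of phase strictly smaller than $\phi(C)$. The integrality $Z(A)=-4m$ for some $m\geq 1$ then yields $[A]=-m[S^2_{\cat{T}_Y}(I_l)]$, and $[B]=[C]-[A]$.

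Third, I plan to derive a contradiction by dimension counting. Replacing $A$ by a simple $\sigma$-stable factor in its Jordan--H\"older filtration inside $\kp(1)$, I may assume $\Hom(A,A)=\CC$ and $m=1$. The Euler form on $\kn(\cat{T}_Y)$ given in Proposition~\ref{prop:GrotGroup}(ii) then yields $\chi(A,A)=\chi(B,B)=-1$, $\chi(A,B)=0$, and $\chi(B,A)=1$. Since $\phi(A)>\phi(B)$ forces $\Hom(A,B)=0$, the long exact sequence obtained by applying $\RHom(A,-)$ to the triangle gives $\Hom(A,C)\cong\Hom(A,A)\cong\CC$, and, using Lemma~\ref{lem:nonexistence} (which guarantees $\hom^1(X,X)\geq 2$ for all nonzero $X\in\cat{B}$) together with the Ext-dimension $2$ of $\cat{B}$, one similarly bounds $\Hom^k(A,C)$ and $\Hom^k(C,B)$. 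Combining these with the long exact sequence coming from $\RHom(-,C)$ applied to the triangle should produce at least $3$ independent classes in $\Hom^1(C,C)$, contradicting $\hom^1(C,C)=2$.

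The main obstacle is the last step: the bookkeeping among the three long exact sequences is delicate because it depends on the precise $\hom$-dimensions between $A$ and $B$, none of which is fixed by the numerical data alone; the proof will require exploiting both Lemma~\ref{lem:nonexistence} and the Ext-dimension $2$ of $\cat{B}$ in a non-trivial way to rule out all cancellations in the spectral sequence computing $\Hom^\bullet(C,C)$ from the filtration $A\subset C$.
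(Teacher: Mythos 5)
Your steps 1 and 2 are correct and reproduce the paper's setup: since $\Im(Z)$ is even and non-negative on $\cat{B}$ (Lemma \ref{lem:rankpoints}(ii) plus Lemma \ref{lem:stabfunction}), a destabilization of $C$ must take the form $0\to A\to C\to B\to 0$ with $A\in\kp(1)$, $\Im(Z(A))=0$, and $B$ $\sigma$-semistable with $\Im(Z(B))=2$. The problem is step 3, and you have correctly diagnosed it yourself: the ``bookkeeping'' does not close on numerical data alone. Concretely, writing $A=\widetilde{A}[1]$ and running the spectral sequence \eqref{eq:specseq} for the two-step filtration of $C$ (with respect to the heart $\kp((-1/2,1/2])$, in which $\kh^{-1}=\widetilde A$ and $\kh^{0}=B$), the term $E_2^{1,0}=\Hom^1(A,A)\oplus\Hom^1(B,B)$ injects into $\Hom^1(C,C)$ \emph{only if} the differential $E_2^{1,0}\to E_2^{3,-1}=\Hom^3(B,\widetilde A)\cong\Hom^2(B,A)$ vanishes. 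The Euler form gives $\chi(B,A)=1$ (for $m=1$) and puts no upper bound on $\hom^2(B,A)$, so no amount of long-exact-sequence manipulation with $\chi$, Lemma \ref{lem:nonexistence}, and Ext-dimension $2$ can rule out the cancellation. The missing idea is non-numerical: one must use the explicit description of $\kp(1)$ (its objects are shifts $\widetilde A[1]$ of $\mu$-semistable torsion-free sheaves in $\coh(\PP^2,\kb_0)$ of slope exactly $\mu(\kb_0)$, from the proof of Lemma \ref{lem:stability}) and compute
\[
\Hom^2(B,A)\cong\Hom^2(\kh_{\coh}^{-1}(B),\widetilde{A})\cong\Hom(\widetilde{A},\kh_{\coh}^{-1}(B)\otimes_{\kb_0}\kb_{-1})^{\vee}=0,
\]
the vanishing coming from Lemma \ref{lemma:property(3)}: tensoring with $\kb_{-1}$ strictly drops the slope below $\mu(\kb_0)=\mu(\widetilde A)$, while $\kh_{\coh}^{-1}(B)\in\kf_0$ already has $\mu^+\leq\mu(\kb_0)$. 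Once this vanishing is in hand, $\hom^1(A,A)+\hom^1(B,B)\leq\hom^1(C,C)=2$ contradicts Lemma \ref{lem:nonexistence} applied to $A$ and $B$ separately, and no case analysis on the $\hom$-dimensions between $A$ and $B$ is needed.

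A secondary gap: the reduction ``I may assume $\Hom(A,A)=\CC$ and $m=1$'' is not justified. Passing to the first stable subobject $A'\subseteq A$ in a JH filtration gives $Z(A')=-4m'$ with $m'\geq 1$ arbitrary, so you cannot assume $[A]=-[S^2_{\cat{T}_Y}(I_l)]$; your computations $\chi(A,A)=-1$, $\chi(B,A)=1$ all silently use $m=1$. This is avoidable (the paper's argument never needs $A$ stable or $m=1$), but as written your step 3 depends on it.
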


\begin{proof}
Assume $C$ is not $\sigma$-semistable.
Take $0\to A\to C\to B\to0$, a destabilizing sequence in $\cat{B}$, where $B$ is $\sigma$-semistable.
Then $\Im(Z(A))+\Im(Z(B))=2$. But, by Lemma \ref{lem:rankpoints} (ii), $\Im(Z)$ is always even and positive.
Hence $\Im(Z(B))=2$ and $\Im(Z(A))=0$, so that $A$ is $\sigma$-semistable, and, by Remark
\ref{rem:realslope}, $A\cong\widetilde{A}[1]$, a shift by $1$ of a $\mu$-semistable sheaf
$\widetilde{A}$ with slope equal to $\mu(\kb_0)$.
We claim that $\Hom^{\geq 2}(B,A)=0$.
Indeed, since $A$ and $B$ belong to $\cat{B}$, we only need to examine $\Hom^2(B,A)$.
An easy computation shows that
\[
\Hom^2(B,A)\cong\Hom^2(\kh_{\coh}^{-1}(\cd{B}),\cd{\widetilde{A}})\cong\Hom(\cd{\widetilde{A}},\kh_{\coh}^{-1}(\cd{B})\otimes_{\kb_0}\kb_{-1})^{\vee},
\]
where $\kh_{\coh}^\bullet$ denotes the cohomology sheaves of $\cd{B}$ with respect to $\coh(\PP^2,\kb_0)$.
But then, by Lemma \ref{lemma:property(3)}, $\mu^+(\kh_{\coh}^{-1}(\cd{B})\otimes_{\kb_0}\kb_{-1})<\mu(\kb_0)=\mu(\cd{\widetilde{A}})$.
Hence, $\Hom(\cd{\widetilde{A}},\kh_{\coh}^{-1}(\cd{B})\otimes_{\kb_0}\kb_{-1})=0$, as wanted.

Write $\phi:=\phi(C)$, and
consider the abelian category $\cat{B}_{\phi}:= \kp ((\phi-1,\phi])$ in $\cat{T}_Y$. Denote by
$\kh_{\phi}^\bullet$ the cohomology with respect to the corresponding $t$-structure. Then we have a triangle
\[
\kh_{\phi}^{-1}(C)[1]\to C\to\kh_{\phi}^0(C),
\]
where $\kh_{\phi}^{-1}(C)=\widetilde{A}$ and $\kh_{\phi}^0(C)=B$.
By using once more the spectral sequence \eqref{eqn:sp1000} (with respect to the heart $\cat{B}_{\phi}$), the vanishing of $\Hom^2(B,A)\cong\Hom^3(B,\widetilde{A})$ yields that the $E_2^{1,0}$-term survives.
Hence
\[
\hom^1(A,A)+\hom^1(B,B)\leq2.
\]
By Lemma \ref{lem:nonexistence} this is impossible and so $C$ is $\sigma$-semistable.
\end{proof}

\subsection{Ideal sheaves of lines as unique stable objects in their numerical class}\label{subsec:4.3}

In this section, we treat the final step  to conclude the proof of Theorem \ref{thm:main}.
At this point, we only need to show that the ideal sheaves $I_l$ are the only
$\sigma$-stable objects in $\cat{B}$ in their numerical class.

Let $M \in \cat{B}$ be a $\sigma$-stable object with numerical class $[\cd{I_l}]$.
We now prove that $\fu^{-1}(M)=(\sigma_*\circ\Phi')(M)$ is a torsion-free sheaf on $Y$ with Chern character equal to $\ch(I_l)$. As we will observe at the end of this section, this is enough to conclude since the Picard group of $Y$ is isomorphic to $\ZZ$. This means that if a torsion-free sheaf $A$
on $Y$ satisfies $\ch(A)=\ch(I_l)$, then there exists a line $m$ on $Y$ such that $A\cong I_m$.

First notice that, by Example \ref{ex:ideallineblownup}, $[M]=[\kb_1]-[\kb_0]$, and, in
particular, $\rk(M)=0$ and $\deg(M)=2$.

\begin{lem}\label{lem:Lemma1}
We have either $M\cong\cd{I_{l_0}}$ or $M\in\coh(\PP^2,\kb_0)$.
\end{lem}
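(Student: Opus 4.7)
My approach is to study $M$ through its image in $\cat{A}_0\subseteq\Db(\PP^2,\kb_0)$ under $(\sigma_*\circ\Phi')^{-1}$. Since $\cat{A}_0$ is the tilt of $\coh(\PP^2,\kb_0)$ at the torsion pair $(\kt_0,\kf_0)$, the object $M$ fits in a triangle $A[1]\to M\to B$ with $A:=\kh^{-1}_{\coh}(M)\in\kf_0$ and $B:=\kh^0_{\coh}(M)\in\kt_0$. If $A=0$, then $M=B\in\coh(\PP^2,\kb_0)$ and we are done; so assume $A\neq 0$ and aim for $M\cong(\sigma_*\circ\Phi')^{-1}(I_{l_0})$. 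From $[M]=[\kb_1]-[\kb_0]$ expressed in the basis $[\kb_1],[\kb_0],[\kb_{-1}]$, combined with the conditions defining $\kt_0$ and $\kf_0$, I pin down $\rk(A)=\rk(B)=4k$ for some $k\geq 1$, $\deg(A)=-5k$ (so that $A$ is $\mu$-semistable of slope $-5/4$), and $\deg(B)=-5k+2$. Applying $\Hom(\kb_1,-)$ to the triangle and using $\RHom(\kb_1,M)=0$ (as $M\in\cat{T}_Y$), the long exact sequence degenerates to $\Hom^i(\kb_1,A[1])\cong\Hom^{i-1}(\kb_1,B)$ for all $i$; together with the vanishings $\Hom^{<0}(\kb_1,B)=0$ and $\Hom^{\geq 3}(\kb_1,A)=0$, this collapses $\Hom^\bullet(\kb_1,A[1])$ to a single $\Hom(\kb_1,B)$ in degree $1$.

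The core stability input is the following: were $\Hom(\kb_1,B)=0$, then $A[1]\in\cat{T}_Y\cap\cat{A}_0=\cat{B}$ would be a nonzero subobject of $M$ with $Z(A[1])=-4k$ of phase $1$, destabilizing $M$ (phase $1/2$). Hence $\Hom(\kb_1,B)\neq 0$; pick nonzero $\psi\colon\kb_1\to B$ and set $I:=\mathrm{im}(\psi)\subseteq B$. I claim $\psi$ must be injective in $\coh(\PP^2,\kb_0)$: otherwise $\rk(I)=0$, so (by the parity statement of Lemma~\ref{lem:rankpoints}(ii) combined with $B/I\in\kt_0$) $\deg(I)=0$ and $I$ is point-supported. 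The pullback $M'':=M\times_B I$ in $\cat{A}_0$ then lies in $\cat{B}$: the connecting map for its triangle $A[1]\to M''\to I$ factors as $\psi\mapsto[M]\circ\psi$ through the isomorphism $\Hom(\kb_1,B)\isomor\Hom^2(\kb_1,A)$ coming from $M$'s triangle, and $B/I\in\cat{B}$ follows from a direct long exact sequence check. But then $Z(M'')=-4k$ has phase $1$, yielding a subobject of $M$ in $\cat{B}$ destabilizing it, a contradiction.

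Hence $\kb_1\hookrightarrow B$ in $\coh$, with cokernel $C:=B/\kb_1\in\kt_0$ (since $\kt_0$ is closed under quotients). For $k\geq 2$, $C$ has positive rank and average slope $-5/4$, contradicting $\mu^-(C_{tf})>-5/4$ required by $\kt_0$; so $k=1$ and $[C]=0$. Using $\chi(\kb_1,B)=1$ to conclude $\dim\Hom(\kb_1,B)=1$, the fact that $\Hom(\kb_1,\kb_1)\to\Hom(\kb_1,B)$ sends the identity to the nonzero $\psi$ forces $\Hom(\kb_1,C)=0$; since $C$ would be point-supported if nonzero, Lemma~\ref{lem:rankpoints}(iii) forces $C=0$, and $B\cong\kb_1$. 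By Serre duality $\Hom(A,\kb_0)^\vee\cong\Hom^2(\kb_1,A)=\CC$; the $\mu$-stability of both $A$ and $\kb_0$ at common slope $-5/4$ forces any nonzero $A\to\kb_0$ to be injective with cokernel of class $0$ in $\kn(\PP^2,\kb_0)$, hence zero (as nonzero torsion sheaves have nonzero class), so $A\cong\kb_0$. Finally, the split extension $\kb_0[1]\oplus\kb_1$ is not in $\cat{T}_Y$ (because $\Hom(\kb_1,\kb_1)\neq 0$), so $M$ is the unique nontrivial extension, which by Example~\ref{ex:ideallineblownup} equals $(\sigma_*\circ\Phi')^{-1}(I_{l_0})$.

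The principal obstacle is the non-injective case for $\psi$: it requires the pullback construction $M''=M\times_B I$ together with a careful matching of connecting morphisms to certify $M''\in\cat{B}$, thereby producing a phase-$1$ subobject that destabilizes $M$.
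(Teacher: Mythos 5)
Your route genuinely differs from the paper's: where the paper establishes the slope bound $\mu^-(\kh_{\coh}^0(M)_{tf})\geq\mu(\kb_1)$ by iterating evaluation maps $\kb_1^{\oplus a_i}\to M_i$ and invoking noetherianity of $\coh(\PP^2,\kb_0)$, and then pins down $r=4$ by a degree count, you try to produce an injection $\kb_1\hookrightarrow B$ directly and bound $k$ via the cokernel's membership in $\kt_0$. The strategy is viable, but the step you yourself flag as the principal obstacle contains a genuine gap. For a single non-injective $\psi$ with $I=\mathrm{im}(\psi)$, the long exact sequence for $A[1]\to M''\to I$ gives $\Hom^1(\kb_1,M'')\cong\mathrm{coker}\bigl(\Hom(\kb_1,I)\to\Hom(\kb_1,B)\bigr)$, and this cokernel vanishes only if every morphism $\kb_1\to B$ factors through $I$ --- which there is no reason to expect for the image of one chosen map. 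So $M''$ need not lie in $\cat{T}_Y$, hence not in $\cat{B}$, and the phase-$1$ destabilization does not go through (the same surjectivity is what your ``direct long exact sequence check'' for $B/I\in\cat{B}$ secretly requires). The repair is to run the dichotomy on the image $I$ of the full evaluation map $\kb_1\otimes\Hom(\kb_1,B)\to B$: either some $\psi$ is injective, or all are non-injective, in which case $I$ is point-supported, every map $\kb_1\to B$ factors through $I$ by construction, and your pullback argument then does certify $M''\in\cat{B}$.

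There is a second, smaller circularity in the endgame. After $k=1$ you invoke $\chi(\kb_1,B)=1$ to get $\hom(\kb_1,B)=1$ and hence $C=0$; but $\chi(\kb_1,B)=\chi(\kb_1,\kb_1)+\chi(\kb_1,C)=1+\hom(\kb_1,C)$, so the equality $\chi(\kb_1,B)=1$ is equivalent to what you are trying to prove. Likewise ``cokernel of class $0$'' for a map $A\to\kb_0$ presupposes $[A]=[\kb_0]$, i.e.\ $[C]=0$. The fix is essentially the paper's final step: $\Hom(A,\kb_0)\neq 0$ already follows from $\Hom(\kb_1,B)\neq0$ by Serre duality, any nonzero map $A\to\kb_0$ is automatically injective with point-supported cokernel $D$ (both sheaves are torsion-free of the minimal rank $4$ and degree $-5$), and then $[C]+[D]=0$ together with the positivity of $\chi(\kb_1,-)$ on nonzero point-supported modules (Lemma~\ref{lem:rankpoints}(iii)) forces $C=D=0$ simultaneously, giving $B\cong\kb_1$, $A\cong\kb_0$, and the unique non-split extension.
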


\begin{proof}
Assume that $M\notin\coh(\PP^2,\kb_0)$, and so that $\kh_{\coh}^{-1}(M)\neq0$.
Then, since $\rk(M)=0$ and $\rk(\kh_{\coh}^{-1}(M))\neq0$, we have $\kh_{\coh}^0(M)\neq0$, too, and an exact triangle
\begin{equation}\label{eq:1717}
\kh_{\coh}^{-1}(M)[1]\to M\to\kh_{\coh}^0(M).
\end{equation}
We divide the proof in three steps.

\medskip

\noindent
{\bf Step 1.} We have that $\kh_{\coh}^0(M),\kh_{\coh}^{-1}(M)\notin\fu(\cat{T}_Y)$.
Indeed, suppose the contrary, namely that either $\kh_{\coh}^0(M)$ or $\kh_{\coh}^{-1}(M)$ is in
$\fu(\cat{T}_Y)$.
Then both the two cohomologies are in $\fu(\cat{T}_Y)$ (and so in $\fu(\cat{B})$), and
since $\rk(\kh_{\coh}^0(M))>0$, $\phi(\kh_{\coh}^{0}(M))<1/2=\phi(M)$. But then
\eqref{eq:1717} destabilizes $M$, a contradiction.

\medskip

\noindent
{\bf Step 2.} Here we want to prove that $\mu^-(\kh_{\coh}^0(M)_{tf})\geq\mu(\kb_1)$.
By Step 1, $M_0:=\kh_{\coh}^0(M)$ does not belong to $\fu(\cat{T}_Y)$.
Hence, by Proposition \ref{prop:properties} (i),  $\Hom^*(\kb_1,M_0)\neq0$;
we claim that  $\Hom^p(\kb_1,M_0)=0$, for all $p\neq0$, so that
$\Hom^*(\kb_1,M_0)\cong\CC^{\oplus a_0}$, for some $a_0>0$. Indeed, $\Hom^p(\kb_1,M_0)=0$ if $p<0$.
To see that the same holds for $p>0$, observe that applying the functor $\mathbf{R}\Hom(\kb_1,-)$ to \eqref{eq:1717}, we get the exact sequence
\[
\Hom^p(\kb_1,M)\longrightarrow\Hom^p(\kb_1,M_0)\longrightarrow\Hom^{p+2}(\kb_1,\kh_{\coh}^{-1}(M)).
\]
The first vector space is trivial for any $p$ because $M\in\fu(\cat{T}_Y)$ (\ref{prop:properties}). The last one is trivial for all $p>0$ because $\coh(\PP^2,\kb_0)$ has $\mathrm{Ext}$-dimension $2$.

Consider the evaluation map $ev_0:\kb_1^{\oplus a_0}\to M_0$.
If it is surjective, since $\kb_0$ is $\mu$-stable, we have $\mu^-((M_0)_{tf})\geq\mu(\kb_1)$, as wanted.

Assume $ev_0$ is not surjective.
Set $M_1:=\coker(ev_0)$.
Then, $\mathrm{cone}(ev_0)\in \langle\kb_1\rangle^{\perp}=\fu(\cat{T}_Y)$ and we have an exact triangle
\[
\ker(ev_0)[1]\to\mathrm{cone}(ev_0)\to M_1.
\]
As before, if $\Hom^p(\kb_1,M_1)\neq0$, then $p=0$.

Assume $\Hom(\kb_1,M_1)=0$. Then $M_1\in\fu(\cat{T}_Y)$.
Moreover, $M_0\epi M_1$ in $\coh(\PP^2,\kb_0)$ implies $M_1\in\kt_0$ and so $M_1\in\fu(\cat{B})$.
Consider the composition
\[
\psi:M\to M_0\to M_1.
\]
Then $\psi\neq0$ and the image $\im(\psi)$ of $\psi$ in the abelian category $\fu(\cat{B})$ is non-trivial.
Since $\im(\psi)\mono M_1$, where $M_1$ is a sheaf,  $\im(\psi)\in\coh(\PP^2,\kb_0)$.
So, we found a surjection $M\epi\im(\psi)$ in $\fu(\cat{B})$ with $1/2=\phi(M)\geq\phi(\im(\psi))$, contradicting the $\sigma$-stability of $M$.

Hence, $\Hom(\kb_1,M_1)\cong\CC^{\oplus a_1}\neq0$.
Proceeding as before, we consider the evaluation map $ev_1:\kb_1^{\oplus a_1}\to M_1$, set $M_2:=\coker(ev_1)$, and so on.
What we have produced is a sequence of quotients
\[
M_0\epi M_1\epi M_2\epi\ldots
\]
in $\coh(\PP^2,\kb_0)$. Since this is a noetherian abelian category, there exists some $m\gg0$
with $M_m=0$.

Now we can conclude with an inductive argument in a finite number of steps. Let $s$ be the smallest integer such that $\rk(M_{s+1})=0$. For any $j=0,\ldots s$, we have a map of short
exact sequences:
\begin{equation*}
\xymatrix{
0 \ar[r] & \ar[d] (M_{j-1})_{tor} \ar[r] & \ar[d] M_{j-1} \ar[r] & \ar[d] (M_{j-1})_{tf} \ar[r] & 0\\
0 \ar[r] &  (M_{j})_{tor} \ar[r] &  M_{j} \ar[r] &  (M_{j})_{tf} \ar[r] & 0}
\end{equation*}
The nine-lemma applied to this yields a short exact sequence in $\coh(\PP^2,\kb_0)$
\[
0\to Q\to (M_{j-1})_{tf}\to (M_j)_{tf}\to 0,
\]
where $Q$ sits in a short exact sequence in $\coh(\PP^2,\kb_0)$
\[
0\to A'\to Q\to B'\to 0,
\]
where $\kb_1^{\oplus a_{j-1}}\epi A'$ and $\rk(B')=0$. Therefore, $\mu^-(Q)\geq\mu(\kb_1)$ and, to deduce that $\mu^-((M_{j-1})_{tf})\geq\mu(\kb_1)$, it is enough to show that the same holds for $\mu^-((M_{j})_{tf})\geq\mu(\kb_1)$.

Thus we have reduced ourselves to treating the case $j=s$. By our assumptions, $(M_s)_{tf}$ sits in a short exact sequence in $\coh(\PP^2,\kb_0)$
\[
0\to A\to (M_s)_{tf}\to B\to 0,
\]
where $\kb_1^{\oplus a_s}\epi A$ and $\rk(B)=0$. Again this implies $\mu^-((M_s)_{tf})\geq\mu(\kb_1)$, as wanted.

\medskip

\noindent
{\bf Step 3.} Set $r:= \rk(\kh_{\coh}^0(M))=\rk(\kh_{\coh}^{-1}(M))\neq0$.
By Step 2, we have
\begin{equation*}
\begin{split}
2=\deg(\kb_1)-\deg(\kb_0)&=\deg(M)=\deg(\kh_{\coh}^0(M))-\deg(\kh_{\coh}^{-1}(M))\\
    &=\deg(\kh_{\coh}^0(M)_{tor})+\deg(\kh_{\coh}^0(M)_{tf})-\deg(\kh_{\coh}^{-1}(M))\\
    &\geq\deg(\kh_{\coh}^0(M)_{tf})-\deg(\kh_{\coh}^{-1}(M))\\
    &\geq \frac{r}{4}(\deg(\kb_1)-\deg(\kb_0)).
\end{split}
\end{equation*}
Thus, $r\leq4$. But then,  by Lemma \ref{lem:rankpoints} (i), $r=4$, and the inequalities are equalities. In particular, $\deg(\kh_{\coh}^0(M)_{tor})=0$, $\mu(\kh_{\coh}^0(M)_{tf})=\mu(\kb_1)$,
and  $\mu(\kh_{\coh}^{-1}(M))=\mu(\kb_0)$. It follows that
$\kh_{\coh}^0(M)$ has torsion only on points, its torsion-free part is $\mu$-stable, and
$\kh_{\coh}^{-1}(M)$ is $\mu$-stable too.
But, by Step 1, $\Hom(\kb_1,\kh_{\coh}^0(M))\cong\Hom^2(\kb_1,\kh_{\coh}^{-1}(M))\neq0$.

Consider a non-zero morphism $\kb_1\to\kh_{\coh}^0(M)$.
Then, by stability, either $\kb_1\to\kh_{\coh}^0(M)_{tor}$ or $\kb_1\mono\kh_{\coh}^0(M)_{tf}$.
But, since $\kh_{\coh}^0(M)_{tor}$ is supported on points, if all homomorphisms $\kb_1\to\kh_{\coh}^0(M)$ factorize through $\kh_{\coh}^0(M)_{tor}$, then, by Lemma \ref{lem:rankpoints} (iii), $\kh_{\coh}^0(M)_{tf}$ is in $\fu(\cat{T}_Y)$ and so it destabilizes $M$. Thus $\kb_1\mono\kh_{\coh}^0(M)_{tf}$.

In the same way, by Serre duality, $\kh_{\coh}^{-1}(M)\mono\kb_0$.
Set $T_1:=\kh_{\coh}^0(M)_{tf}/\kb_1$ and $T_2:=\kb_0/\kh_{\coh}^{-1}(M)$.
Then we have
\begin{equation*}
\begin{split}
[\kb_1]-[\kb_0]&=[M]=[\kh_{\coh}^0(M)]-[\kh_{\coh}^{-1}(M)]\\
       &=[\kh_{\coh}^0(M)_{tor}]+[\kb_1]+[T_1]-[\kb_0]+[T_2],
\end{split}
\end{equation*}
and so $[\kh_{\coh}^0(M)_{tor}]+[T_1]+[T_2]=0$. But then, since these sheaves are supported
on points,
\[
\kh_{\coh}^0(M)_{tor}=T_1=T_2=0,
\]
and we obtain $\kh_{\coh}^0(M)\cong\kb_1$ and $\kh_{\coh}^{-1}(M)\cong\kb_0$, that is $M\cong\cd{I_{l_0}}$, as required.
\end{proof}

By the previous lemma, we can assume $M\in\coh(\PP^2,\kb_0)$.
Consider $\Phi(M)\in\Db(\widetilde{Y})$.
Since $\ke'$ is a flat left $\pi^*\kb_0$-module, $\Phi(M)$ is a sheaf.

\begin{lem}\label{lem:vanishing}
We have
\begin{equation*}
\Ext^p(\Phi(M),\ko_{\widetilde{Y}}(-h))=\begin{cases}\CC,&\mathrm{if}\ p=1\\ 0,&\mathrm{otherwise.}\end{cases}
\end{equation*}
\end{lem}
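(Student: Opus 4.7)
My strategy is to reduce the computation to sheaf cohomology on $\PP^2$ via the adjunction $\Phi\dashv\Pi$, Serre duality in $\Db(\PP^2,\kb_0)$ (Proposition~\ref{prop:properties}(ii)), and the identity $\Ext^i(\kb_1,K)\cong H^i(\PP^2,\mathrm{Forg}(K\otimes_{\kb_0}\kb_{-1}))$ used in the proof of Lemma~\ref{lem:rankpoints}(iii).

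First I would compute $\Pi(\ko_{\widetilde Y}(-h))=\pi_*((\ke')^\vee(-h))$. Dualizing the resolution \eqref{eq:defofkeprime} via $\RHom(-,\ko_{\widetilde{\PP^4}})$ and applying Grothendieck--Serre duality for the divisor embedding $\alpha:\widetilde Y\hookrightarrow\widetilde{\PP^4}$ (with $\omega_{\widetilde Y/\widetilde{\PP^4}}\cong\ko_{\widetilde Y}(h+2H)$), then twisting by $\ko_{\widetilde{\PP^4}}(-2h-2H)$, one obtains a short exact sequence with outer terms $q^*\kb_1^\vee(-2h-H)$ and $q^*\kb_0^\vee(-2h)$ presenting $\alpha_*((\ke')^\vee(-h))$. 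Since $\ko_{\widetilde{\PP^4}}(-H)\cong\ko_q(-1)$ on the $\PP^2$-bundle $q$, $Rq_*\ko_{\widetilde{\PP^4}}(-H)=0$ and the first term drops out under $q_*$, giving $\pi_*((\ke')^\vee(-h))\cong\kb_0^\vee(-2h)$ in degree zero. Proposition~\ref{prop:properties}(ii) provides $\kb_{-1}\cong\kb_0^\vee\otimes\omega_{\PP^2}$ and hence $\kb_0^\vee\cong\kb_1(2h)$, so $\Pi(\ko_{\widetilde Y}(-h))\cong\kb_1$. Adjunction, Serre duality, and the cited formula for $\Hom^i(\kb_1,-)$ then yield
\[
\Ext^p(\Phi(M),\ko_{\widetilde Y}(-h))\cong\Ext^p_{\kb_0}(M,\kb_1)\cong H^{2-p}(\PP^2,\mathrm{Forg}(M\otimes_{\kb_0}\kb_{-1}\otimes_{\kb_0}\kb_{-1}))^\vee.
\]
From $\kb_{-1}=\kb_1(-h)$ and $\kb_1\otimes_{\kb_0}\kb_{-1}\cong\kb_0$ (Proposition~\ref{prop:properties}(ii)) one easily gets $\kb_{-1}\otimes_{\kb_0}\kb_{-1}\cong\kb_0(-h)$, so the tensor collapses to $M(-h)$ and
\[
\Ext^p(\Phi(M),\ko_{\widetilde Y}(-h))\cong H^{2-p}(\PP^2,\mathrm{Forg}(M)(-h))^\vee.
\]

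By Proposition~\ref{prop:GrotGroupModules}, $[M]=[\kb_1]-[\kb_0]$ makes $\mathrm{Forg}(M)$ a torsion sheaf of degree $2$ and Euler characteristic $1$; hence $\chi(\mathrm{Forg}(M)(-h))=-1$, and $H^2(\PP^2,\mathrm{Forg}(M)(-h))=0$ by Serre duality on $\PP^2$ (torsion source, locally free target). Equivalently, $\Hom(\Phi(M),\ko_{\widetilde Y}(-h))\cong\Hom_{\kb_0}(M,\kb_1)=0$ directly, since $M$ is torsion and $\kb_1$ is locally free. The substantive remaining step is the vanishing $H^0(\PP^2,\mathrm{Forg}(M)(-h))=\Hom_{\kb_0}(\kb_0(h),M)=0$: once it is in hand, the Euler characteristic forces $H^1\cong\CC$ and the lemma follows. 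I expect this vanishing to be the main obstacle. The plan is to invoke $\sigma$-stability of $M$, using first the identity $\Phi'(\kb_1)\cong\ko_{\widetilde Y}(h-H)$ of Proposition~\ref{prop:properties}(i) together with $\Phi'(M)\in\sigma^*\cat{T}_Y$ to deduce $\Hom^*(\kb_1,M)=0$; combined with Lemma~\ref{lem:rankpoints}(iii) this rules out zero-dimensional $\kb_0$-submodules of $M$, forcing $\mathrm{Forg}(M)$ to be pure of dimension one. A nonzero $\phi:\kb_0(h)\to M$ would then produce a torsion sub-$\kb_0$-module $I=\im\phi\hookrightarrow M$, and a phase-comparison argument against $\phi(M)=1/2$ in the tilted heart $\cat{B}$ should yield the required contradiction.
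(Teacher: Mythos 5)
Most of your reduction matches the paper's proof: the computation $\Pi(\ko_{\widetilde{Y}}(-h))\cong\kb_0^\vee(-2h)\cong\kb_1$ (the paper dualizes via relative Serre duality on $\kh om(\alpha_*\ke',-)$ rather than dualizing the resolution, but this is the same calculation), the adjunction to $\Ext^p_{\kb_0}(M,\kb_1)$, the vanishing for $p\neq 1,2$ from torsion-ness, $\chi(M,\kb_1)=-1$, the identification $\Ext^2(M,\kb_1)\cong\Hom(\kb_0(h),M)^\vee$, and the purity of $\mathrm{Forg}(M)$ via $\Hom^\bullet(\kb_1,M)=0$ and Lemma \ref{lem:rankpoints}(iii) are all correct and are exactly the paper's steps. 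The problem is the last step, which you yourself flag as the main obstacle: the proposed ``phase-comparison against $\phi(M)=1/2$ in $\cat{B}$'' does not close the argument. The image $I=\im\phi$ of a nonzero $\phi:\kb_0(h)\to M$ is a subobject of $M$ only in $\coh(\PP^2,\kb_0)$ (hence in $\cat{A}_0$), not in $\cat{B}$: since $\deg(I)$ is even and $M$ is pure, either $I=M$ (no proper subobject, nothing to destabilize), or $M/I$ is a nonzero sheaf supported on points, in which case $\Hom(\kb_1,M/I)\neq 0$ forces $\Ext^1(\kb_1,I)\neq 0$, so $I\notin\cat{T}_Y$ and it is not a subobject in the heart where $\sigma$-stability is tested. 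In neither case does stability of $M$ produce a contradiction.

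The paper's actual argument for $\Hom(\kb_0(h),M)=0$ uses no stability at all, only $M\in\cat{T}_Y=\langle\kb_1\rangle^\perp$: one takes the exact sequence $0\to\kb_1\to\kb_0(h)\to Q\to 0$ with $Q$ pure of dimension $1$; since $\Hom(\kb_1,M)=0$, any $\phi$ factors through a map $\eta:Q\to M$; degree parity (Lemma \ref{lem:rankpoints}(ii)) together with purity of $Q$ and of $M$ forces $\eta$ to be injective (a kernel of degree $0$ is point-supported, impossible in $Q$; a kernel of degree $2$ makes $\im\eta$ point-supported, impossible in $M$); and then $\Hom(\kb_1,Q)\neq 0$ (from $\hom(\kb_1,\kb_0(h))=2>1=\hom(\kb_1,\kb_1)$ and exceptionality of $\kb_1$) gives $\Hom(\kb_1,M)\neq 0$, contradicting $M\in\cat{T}_Y$. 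You should replace the stability plan with this factorization-through-$Q$ argument; everything before it in your write-up can stand.
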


\begin{proof}
First of all, by the adjunction $\Phi\dashv \Pi$ (\ref{bascoh}),
\[
\Ext^p(\Phi(M),\ko_{\widetilde{Y}}(-h))\cong\Ext^p(M,\Pi(\ko_{\widetilde{Y}}(-h))).
\]
We have that $\Pi(\ko_{\widetilde{Y}}(-h))\cong\kb_1$. Indeed, since the normal bundle of $\widetilde{Y}$ in
$\widetilde{\PP}^4$ is $\ko_{\widetilde{Y}}(2H+h)$, we get
\begin{equation*}
\begin{split}
\Pi(\ko_{\widetilde{Y}}(-h))&= \pi_* \kh om(\ke', \ko_{\widetilde{Y}}(-h))\\
&\cong q_* \alpha_* \kh om(\ke' (2H+h), \alpha^! \ko_{\widetilde{\PP}^4}(-h)[1])\\
&\cong q_* \kh om(\alpha_*(\ke') (2H+h), \ko_{\widetilde{\PP}^4}(-h)[1]),
\end{split}
\end{equation*}
where for the last isomorphism we use relative Serre duality. Since $\alpha_* \ke'$ sits in the short exact sequence \eqref{eq:defofkeprime}, we obtain the isomorphisms
$$\Pi(\ko_{\widetilde{Y}}(-h))\cong\kb_0^{\vee}(-2h)\cong\kb_1.$$

Since $M$ is a torsion sheaf, $\Ext^p(M,\kb_1)=0$, for all $p\neq1,2$.
Moreover,
\[
\chi(M,\kb_1)=\chi(\kb_0[1],\kb_1)+\chi(\kb_1,\kb_1)=-1.
\]
Hence, we only need to show that $\Ext^2(M,\kb_1)=0$.

By Proposition \ref{prop:properties}, $\Ext^2(M,\kb_1)\cong\Hom(\kb_0(h),M)^{\vee}$.
Assume that $\Hom(\kb_0(h),M)\neq0$. One calculates that $\hom(\kb_1,\kb_0(h))>1$
using (\ref{bcoh}), and the free-forgetful adjunction. Thus, there is
an exact sequence in $\coh(\PP^2,\kb_0)$
\begin{equation}\label{eq:exseqmult}
0\to\kb_1\to\kb_0(h)\to Q\to0,
\end{equation}
where, for reasons of homological dimension, $Q:=\kb_0(h)/\kb_1$ is pure of dimension $1$.
Since $M\in\cat{T}_Y$, the composition $\kb_1\to\kb_0(h)\to M$ is the zero map.
Hence we have a non-zero morphism $\eta:Q\to M$. Suppose $\eta$ is not injective. Then, by Lemma \ref{lem:rankpoints} (ii), $\ker(\eta)$ has degree $2$ (as $\rk (\ker (\eta))= 0$ and $\deg(Q)=2$). By Lemma \ref{lem:notorsion0} below, $M$ is also pure of dimension $1$ and so $\ker(\eta)=0$.
By applying $\Hom(\kb_1,-)$ to the exact sequence \eqref{eq:exseqmult}, we have $\Hom(\kb_1,Q)\neq0$ and then $\Hom(\kb_1,M)\neq0$, a contradiction.
\end{proof}

\begin{lem}\label{lem:notorsion0}
$\mathrm{Forg}(M)$ is a pure sheaf on $\PP^2$ of dimension $1$.
\end{lem}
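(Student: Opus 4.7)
My plan is to show that any zero-dimensional torsion subsheaf of $M$ would violate the orthogonality condition $M\in\cat{T}_Y$, which in terms of $\Db(\PP^2,\kb_0)$ reads $\Hom^\bullet(\kb_1,M)=0$ (since $(\Phi')^{-1}(\ko_{\widetilde{Y}}(h-H))\cong\kb_1$ by Proposition \ref{prop:properties}(i)). Since $[M]=[\kb_1]-[\kb_0]$ forces $\rk(M)=0$, the forgetful image $\mathrm{Forg}(M)$ is automatically a torsion sheaf of dimension at most $1$ on $\PP^2$, so purity of dimension $1$ amounts to the absence of a nonzero subobject of $M$ in $\coh(\PP^2,\kb_0)$ whose underlying sheaf is supported on finitely many points.

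I would first check, exactly as in Lemma \ref{lem:tfreepart}, that the maximal zero-dimensional torsion subsheaf $T\subseteq\mathrm{Forg}(M)$ carries a canonical $\kb_0$-module structure making it a subobject of $M$ in $\coh(\PP^2,\kb_0)$: the right $\kb_0$-action is local, so it sends $T\otimes_{\ko_{\PP^2}}\kb_0$ into $T$. Assuming $T\neq 0$ for contradiction, one has $\rk(T)=0$ and $\deg(T)=0$ (the first Chern class of a sheaf supported in codimension $\geq 2$ vanishes), so Lemma \ref{lem:rankpoints}(iii) applies and produces a nonzero morphism $\kb_1\to T$. Composing with the inclusion $T\hookrightarrow M$ yields a nonzero element of $\Hom(\kb_1,M)$, contradicting $M\in\cat{T}_Y$. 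Hence $T=0$ and $\mathrm{Forg}(M)$ is pure of dimension $1$.

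There is no substantive obstacle here; the only slightly subtle point is the verification that the zero-dimensional torsion is automatically a $\kb_0$-submodule, which mirrors the argument in Lemma \ref{lem:tfreepart}. It is worth noting that the $\sigma$-stability of $M$, and even the precise value of the numerical class, play no essential role: the proof requires only that $M$ be a sheaf of rank zero lying in $\cat{T}_Y$.
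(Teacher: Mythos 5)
Your argument is correct and is essentially the paper's own proof: both isolate the zero-dimensional torsion part $T$ of $\mathrm{Forg}(M)$, endow it with a $\kb_0$-module structure as in Lemma \ref{lem:tfreepart}, invoke Lemma \ref{lem:rankpoints} (iii) to get a nonzero map $\kb_1\to T\hookrightarrow M$, and contradict $M\in\cat{T}_Y$. The extra remarks (why $\deg(T)=0$, why stability and the exact numerical class are irrelevant) are accurate but not needed.
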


\begin{proof}
Denote by $T$ the torsion part of $\mathrm{Forg}(M)$ supported in dimension $0$.
It is easy to see, as in Lemma \ref{lem:tfreepart}, that then $T$ has a structure of $\kb_0$-module for which $T\mono M$ in $\coh(\PP^2,\kb_0)$.
But then, by Lemma \ref{lem:rankpoints} (iii), $\Hom(\kb_1,T)\neq0$ unless $T=0$. So $\Hom(\kb_1,M)\neq0$, contradicting $M\in\fu(\cat{T}_Y)$.
\end{proof}

Since $\Phi'=R_{\ko_{\widetilde{Y}}(-h)}\circ\Phi$, by Lemma \ref{lem:vanishing}, $\Phi'(M)$ is given by an extension
\begin{equation}\label{eqn:z}
0\to\ko_{\widetilde{Y}}(-h)\to\Phi'(M)\to\Phi(M)\to0.
\end{equation}
In particular, it is a sheaf on $\widetilde{Y}$. Moreover, $\ch(\Phi'(M))=\ch(\sigma^*(I_l))=(1,0,*,*)$ and so $\ch(\Phi(M))=(0,h,*,*)$.

\begin{lem}\label{lem:torsionfreeM}
$\Phi'(M)$ is torsion-free.
\end{lem}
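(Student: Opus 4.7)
Assume for contradiction that the torsion subsheaf $T\subset\Phi'(M)$ is nonzero. The strategy is to show in three steps that $T$ must be pure of dimension $2$ with very restricted numerical class, and then to contradict the uniqueness of the non-trivial extension \eqref{eqn:z}.

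\emph{Step 1: $T$ is pure of dimension $2$.} Since $\ko_{\widetilde{Y}}(-h)$ is torsion-free, $T\cap\ko_{\widetilde{Y}}(-h)=0$ in \eqref{eqn:z}, so the quotient map $\Phi'(M)\twoheadrightarrow\Phi(M)$ restricts to an injection $T\hookrightarrow\Phi(M)$. By Lemma \ref{lem:notorsion0} the sheaf $M$ is pure of dimension $1$ on $\PP^2$; since the conic fibration $\pi$ is flat and $\ke'$ is a flat left $\pi^*\kb_0$-module that is locally free of rank $2$ over $\ko_{\widetilde{Y}}$, the sheaf $\Phi(M)=\pi^*M\otimes_{\pi^*\kb_0}\ke'$ is pure of dimension $2$ on $\widetilde{Y}$. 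Hence $T$ is a pure $2$-dimensional subsheaf and its first Chern class $c_1(T)$ is a nonzero effective divisor on $\widetilde{Y}$.

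\emph{Step 2: the section of $L(h)\otimes I_Z$.} The quotient $F:=\Phi'(M)/T$ is torsion-free of rank $1$ with $c_1(F)=-c_1(T)$, so $F\cong L\otimes I_Z$ for a line bundle $L$ of anti-effective class and some subscheme $Z\subset\widetilde{Y}$. The composition $\ko_{\widetilde{Y}}(-h)\hookrightarrow\Phi'(M)\twoheadrightarrow F$ is injective, since its kernel would be a torsion-free subsheaf of the line bundle $\ko_{\widetilde{Y}}(-h)$ that also lies inside the torsion sheaf $T$, hence $0$. Twisting by $\ko_{\widetilde{Y}}(h)$ produces an inclusion $\ko_{\widetilde{Y}}\hookrightarrow L(h)\otimes I_Z$, so $L(h)$ is effective.

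\emph{Step 3: cone calculation and non-splitness.} On $\widetilde{Y}$ we have $\Pic(\widetilde{Y})=\ZZ h\oplus\ZZ D$ with $D=H-h$, and the effective cone is generated by the rays $h$ and $D$. Writing $-L=ah+bD$ with $a,b\geq 0$ not both zero and imposing $L+h=(1-a)h-bD$ effective forces $b=0$ and $a\leq 1$; combined with $(a,b)\neq(0,0)$, the only possibility is $L=\ko_{\widetilde{Y}}(-h)$. In that case $L(h)\otimes I_Z=I_Z$ has a nonzero section, so $Z=\emptyset$ and $F\cong\ko_{\widetilde{Y}}(-h)$. Then the composition $\ko_{\widetilde{Y}}(-h)\hookrightarrow\Phi'(M)\twoheadrightarrow F\cong\ko_{\widetilde{Y}}(-h)$ is a nonzero endomorphism of $\ko_{\widetilde{Y}}(-h)$, hence an isomorphism, and this splits the extension \eqref{eqn:z}. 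This contradicts Lemma \ref{lem:vanishing}, which identifies $\Phi'(M)$ with the unique non-trivial extension of $\Phi(M)$ by $\ko_{\widetilde{Y}}(-h)$. Therefore $T=0$.

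The main obstacle is \emph{Step 1}: verifying that $\Phi(M)$ carries no subsheaves of dimension $\leq 1$. Morally, this is because $\Phi$ raises support dimension by one via the conic fibration, but a rigorous argument requires either a local analysis using the resolution \eqref{eq:defofkeprime} together with the flatness of $\pi$ and of $\ke'$, or, alternatively, applying the adjoint $\Psi$ to a hypothetical low-dimensional subsheaf of $\Phi(M)$ to produce a subobject of $M$ that would violate its purity. The subsequent divisor-theoretic and extension-theoretic arguments are comparatively routine.
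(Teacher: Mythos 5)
Your proof is correct, and its first half coincides with the paper's: both reduce to a torsion subsheaf $T\mono\Phi(M)$ and rule out $\dim T\le 1$ via the purity of $M$ (Lemma \ref{lem:notorsion0}) together with the flatness of $\pi$ and of $\ke'$ over $\pi^*\kb_0$ (so that $\Phi(M)$ is locally a direct summand of $(\pi^*M)^{\oplus n}$ and inherits purity); your flagging of this step is fair, and the paper is no more detailed there. You diverge in the two-dimensional case. The paper applies the Snake Lemma to obtain $0\to\ko_{\widetilde{Y}}(-h)\to N\to\Phi(M)/T\to0$ with $\Phi(M)/T$ of dimension $\le1$, kills this extension by Serre duality to force $T\cong\Phi(M)$, and then contradicts $\sigma$-stability of $M$. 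You instead write the torsion-free rank-one quotient as $L\otimes I_Z$, pin down $L\cong\ko_{\widetilde{Y}}(-h)$ via the effective cone of $\widetilde{Y}$, and contradict the non-splitness of \eqref{eqn:z}. Both routes work and the final contradictions are interchangeable (the triangle \eqref{eqn:z} is non-split because $\Phi'=R_{\ko_{\widetilde{Y}}(-h)}\circ\Phi$, and a decomposable $\Phi(M)\oplus\ko_{\widetilde{Y}}(-h)$ is incompatible with stability anyway). The one assertion you should justify is that the effective cone of $\widetilde{Y}$ is spanned by $h$ and $D$: this is true (test divisor classes against the covering families of strict transforms of lines and of conic fibres of $\pi$), and in fact the paper's dichotomy ``either $\dim T\le1$ or $\ch(T)=(0,h,*,*)$'' implicitly relies on the same computation.
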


\begin{proof}
Assume, for a contradiction, that $\Phi'(M)$ is not torsion-free.
Then there exists an exact sequence
\begin{equation}\label{eqn:z1}
0\to T\to\Phi'(M)\to N\to0,
\end{equation}
with $T$ (resp.\ $N$) a torsion (resp.\ torsion-free) sheaf on $\widetilde{Y}$.
But then $T\mono\Phi(M)$ by \eqref{eqn:z}. Hence, either $T$ has dimension $\leq 1$ or $\ch(T)=(0,h,*,*)$.
The first case is untenable because, by Lemma \ref{lem:notorsion0}, $\pi^*(M)$ has no torsion in dimension $\leq 1$, and, consequently, neither has $\Phi(M)=\pi^*(M)\otimes_{\pi^*\kb_0}\ke'$.

In the second case $\Phi(M)/T$ has dimension $\leq1$ and, by the Snake Lemma (applied to the diagram obtained from \eqref{eqn:z} and \eqref{eqn:z1}), we get an extension
\[
0\to\ko_{\widetilde{Y}}(-h)\to N\to\Phi(M)/T\to0,
\]
which must be trivial by Serre duality. Since $N$ is torsion-free, this is a contradiction unless $T\cong\Phi(M)$.
But this conclusion leads to a contradiction also, since then the sequence (\ref{eqn:z})
splits by Serre duality, and
$\Phi'(M)\cong\Phi(M)\oplus\ko_{\widetilde{Y}}(-h)$ cannot be $\sigma$-stable.
\end{proof}

As $M\in \cat{B}$, there exits an object $\Gamma\in\Db(Y)$, such that
$\Phi'(M)\cong\sigma^*\Gamma$; by the projection formula, $\Gamma=\sigma_*(\Phi'(M))$.
We claim that $\Gamma$ is a torsion-free sheaf on $Y$.
Indeed, the fact that it is a sheaf follows easily since, if not, then there exists an exact triangle
\[
C_0\to\Gamma\to C_1[-1]
\]
in $\Db(Y)$ with $C_0,C_1\in\coh(Y)$.
But then $\kh^0(\sigma^*(C_1))=0$ (where the cohomology is taken in $\coh(\widetilde{Y})$), a contradiction unless $C_1=0$.
Since $\sigma$ is surjective, $\Gamma$ is torsion-free.

Summing up, we have shown that $(\sigma_*\circ\Phi')(M)$ is a torsion-free sheaf on $Y$ with Chern character equal to $\ch(I_l)$.
But, since the Picard group of $Y$ is isomorphic to $\ZZ$, this implies that $(\sigma_*\circ\Phi')(M)$ is the ideal sheaf of some line in $Y$. This proves that if an object in $\cat{B}$ is $\sigma$-stable and in the same numerical class of an ideal sheaf of a line in $Y$, then it is actually isomorphic to an ideal sheaf of a line. Together with Corollary \ref{cor:stabid}, this completes the proof of Theorem \ref{thm:main}.

\section{Proof of Theorem \ref{conj:Kuzne3folds}}\label{sect:finalproof}

In this section we prove the main result of this paper as an application of the results in the previous sections. The strategy is to show that the ideal sheaves of lines on a cubic threefold are preserved by the action of any equivalence (up to composing with a suitable power of the Serre functor, followed by a shift). We complete the proof of Theorem \ref{conj:Kuzne3folds} in Section \ref{subsect:isomor}, by showing that any such equivalence induces an isomorphism between the Fano surfaces of lines.

\subsection{Ideal sheaves go to ideal sheaves}\label{subsect:Fanoprelim}

We begin with a simple consequence of the results of the previous section showing how, given two smooth and projective cubic threefolds $Y$ and $Y'$ and an exact equivalence $U:\cat{T}_{Y'}\isomor\cat{T}_Y$ one can produce another exact equivalence inducing a bijection between the ideal sheaves of lines.

\medskip

Adopting our earlier notation, take $I_{l'}$ an ideal sheaf of a line $l'$ in $Y'$ and consider it as an object in $\cat{T}_{Y'}$. Then consider the object $U(I_{l'})\in\cat{T}_Y$.
Since $U$ is an equivalence of categories, it induces and isomorphism on the Grothendieck groups sending $[I_{l'}]$ to $[U(I_{l'})]$ and preserving the pairing $\chi$. Hence the numerical class $c$ of $U(I_{l'})$ satisfies $\chi(c,c)=-1$.
By Lemma \ref{prop:-1classes}, up to composing with some power of the Serre functor of $\cat{T}_Y$, we can assume $c=[I_l]$, where $l$ is a line in $Y$. Moreover, by Lemma \ref{lem:Mukai}, there is an integer $n_{l'}$ such that $U(I_{l'})[n_{l'}]\in\cat{B}$, where $\cat{B}$ is the heart constructed in Theorem \ref{thm:tstruct3}. Let $\sigma=(Z,\kp)$ be the stability condition constructed in Section \ref{subsect:stabfunct}. Then, by Proposition \ref{prop:Mukai}, $U(I_{l'})[n_{l'}]$ is $\sigma$-stable.

We now want to prove that the shift above can be chosen uniformly. Indeed, given two lines $l',m'$ in $Y'$, assume $U(I_{l'})[n_{l'}]$ is $\sigma$-stable with phase $1/2$ and $U(I_{m'})[n_{m'}]$ is $\sigma$-stable of phase $\phi\in\RR$.
Then $\Hom(I_{m'},I_{l'}[1]),\Hom(I_{l'},I_{m'}[1])\neq0$ and property (c) in the definition of a stability condition give the bound $-1/2<\phi<3/2$.
But then $\phi=1/2$ and so $n_{l'}=n_{m'}=n$.
By Theorem \ref{thm:main}, there exist two lines $l,m$ in $Y$ such that $U(I_{l'})[n]\cong I_l$ and $U(I_{m'})[n]\cong I_m$ in $\cat{T}_Y$. In summary, we proved the following result.

\begin{prop}\label{prop:idealsheavesandequiv}
Let $Y$ and $Y'$ be two cubic threefolds such that $\cat{T}_{Y'}\cong\cat{T}_{Y}$.
Then there exists an equivalence $U:\cat{T}_{Y'}\isomor\cat{T}_{Y}$ which maps ideal sheaves of lines in $Y'$ bijectively onto ideal sheaves of lines in $Y$.
\end{prop}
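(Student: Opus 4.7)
The plan is to start with an arbitrary exact equivalence $U\colon\cat{T}_{Y'}\isomor\cat{T}_Y$ and, after composing with a suitable power of $S_{\cat{T}_Y}$ and a shift, show that the normalized $U$ sends every ideal sheaf of a line in $Y'$ to an ideal sheaf of a line in $Y$. Fix a line $l'\subseteq Y'$ and consider $U(I_{l'})\in\cat{T}_Y$. Since $U$ is an equivalence, $\chi([U(I_{l'})],[U(I_{l'})]) = \chi([I_{l'}],[I_{l'}]) = -1$, and Lemma~\ref{prop:-1classes} then forces $[U(I_{l'})]$ to equal, up to sign, one of $[I_l]$, $[S_{\cat{T}_Y}(I_l)]$, $[S^2_{\cat{T}_Y}(I_l)]$. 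After replacing $U$ by $S_{\cat{T}_Y}^k\circ U$ for an appropriate $k\in\{0,1,2\}$, and possibly composing with $[1]$ to absorb a sign, we may assume $[U(I_{l'})]=[I_l]$.

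Next I would locate $U(I_{l'})$ inside the heart $\cat{B}$ of Theorem~\ref{thm:tstruct2}. Since $\hom^1(U(I_{l'}),U(I_{l'})) = \hom^1(I_{l'},I_{l'}) = 2$, Lemma~\ref{lem:Mukai} places $U(I_{l'})\in\cat{B}[k]$ for some integer $k$, and composing $U$ further with $[-k]$ gives $U(I_{l'})\in\cat{B}$. Proposition~\ref{prop:Mukai} then ensures $U(I_{l'})$ is $\sigma$-stable, and Theorem~\ref{thm:main} identifies it with $I_l$ for some actual line $l\subseteq Y$.

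The main subtlety, and the step I expect to require the most care, is that this normalization of $U$ was arranged with reference to the single line $l'$, and one must verify that the \emph{same} normalized $U$ sends every other ideal sheaf $I_{m'}$ to an ideal sheaf of a line in $Y$. For a second line $m'\subseteq Y'$, Lemmas~\ref{lem:Mukai} and Proposition~\ref{prop:Mukai} again place $U(I_{m'})$, up to shift, as a $\sigma$-stable object of some phase $\phi\in\RR$. The crucial input comes from the non-vanishings $\Ext^1_{\cat{T}_{Y'}}(I_{l'},I_{m'})\neq 0$ and $\Ext^1_{\cat{T}_{Y'}}(I_{m'},I_{l'})\neq 0$, which follow from $\chi(I_{l'},I_{m'})=-1$ (by Proposition~\ref{prop:GrotGroup}), the vanishing of $\Hom(I_{l'},I_{m'})$ and $\Hom(I_{m'},I_{l'})$ for distinct ideal sheaves of lines, and the bound on the $\Ext$-dimension of the analogous heart on $\cat{T}_{Y'}$. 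Transporting these non-vanishings through $U$ and applying axiom (c) in the definition of a stability condition to the pair $(I_l, U(I_{m'}))$ pins down $\phi$ to the interval $(-1/2, 3/2)$, hence $\phi=1/2$, so $U(I_{m'})\in\cat{B}$. Theorem~\ref{thm:main} then identifies $U(I_{m'})\cong I_m$ for some line $m\subseteq Y$.

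Bijectivity of the induced map on sets of lines is automatic: injectivity is immediate from $U$ being an equivalence (if $I_l\cong I_m$ then $l=m$), and surjectivity follows by applying the same procedure to $U^{-1}$. The genuine content is therefore the phase argument of the third paragraph: once the normalization of $U$ is fixed on one line via Serre powers and shifts, the numerical and homological constraints coming from the existence of non-zero $\Ext^1$'s between distinct ideal sheaves conspire to force every other $U(I_{m'})$ into $\cat{B}$ as well, and then Theorem~\ref{thm:main} does the rest.
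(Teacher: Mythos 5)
Your argument is correct and follows the paper's own proof essentially step for step: normalize $U$ on one line via Lemma \ref{prop:-1classes}, Lemma \ref{lem:Mukai} and Proposition \ref{prop:Mukai}, identify the image with some $I_l$ via Theorem \ref{thm:main}, and then use the non-vanishing of $\Ext^1$ in both directions between ideal sheaves of lines together with axiom (c) to force every other $U(I_{m'})$ into phase $1/2$. The only point left implicit (as it is in the paper) is that passing from $\phi\in(-1/2,3/2)$ to $\phi=1/2$ uses $[U(I_{m'})]=[U(I_{l'})]=[I_l]$, so that $Z(U(I_{m'}))=2i$ and the phase is determined modulo $2$.
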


\subsection{Universal families of lines and convolutions}\label{subsect:convuniv}


\medskip

Recall that the classical Torelli Theorem for a cubic threefold $Y$ says that
the isomorphism class of $Y$ is characterized by its intermediate Jacobian $J(Y)$, thought of
as a principally polarized abelian variety (see \cite{CG}). Furthermore, this latter invariant
itself can be recovered from the Fano variety of lines $F(Y)$. Precisely: the intermediate Jacobian $J(Y)$ is isomorphic to the Albanese variety of $F(Y)$, while the natural
polarization on $J(Y)$ is the class of the image of $F(Y)\times F(Y)$ in $J(Y)$ via the
map $(s,t)\mapsto s-t$. Thus, the natural approach to proving
Theorem \ref{conj:Kuzne3folds} is by showing that the bijection of ideal sheaves of
Proposition \ref{prop:idealsheavesandequiv} in fact induces an isomorphism
$F(Y)\cong F(Y')$.

We hope to produce this isomorphism by appealing to the well known fact the the Fano surface
$F(Y)$ is isomorphic to moduli space of ideal sheaves of lines. Following \cite{Inaba,Lieb}, define the functor
\[
Fano_{Y}:(\mathrm{Sch}/\CC)\to(\mathrm{Set})
\]
by sending a $\CC$-scheme $S$ to the set of equivalence classes of relatively perfect complexes $\ki\in\Db(Y\times S)$ (cf.\ \cite[Def.\ 2.1.1 and Cor.\ 4.3.4]{Lieb}) such that, for all $s\in S$, $\ki|_{Y\times s}$ is an ideal sheaf of a line in $Y$. Then $Fano_Y$ is represented by
$F(Y)$, which is a smooth projective surface (for a recollection about these and other properties of $F(Y)$ see, for example, the introduction of \cite{CG}). Write $\ki'$ for the universal ideal sheaf on
$(Y'\times F(Y'))$.

Define the composite functor
\[
R:\Db(Y')\mor[\rho']\cat{T}_{Y'}\mor[U]\cat{T}_Y\mor[\epsilon]\Db(Y),
\]
where $\epsilon$ is the embedding of $\cat{T}_Y$ into $\Db(Y)$, $\rho'$ is the natural projection from $\Db(Y')$ to $\cat{T}_{Y'}$ and  $U:\cat{T}_{Y'}\isomor\cat{T}_Y$ is an exact equivalence as in the previous section. If $R$ were a \emph{Fourier--Mukai functor}, i.e.\ $R\cong\Phi_\kg$ for some $\kg\in\Db(Y'\times Y)$, the object
\[
\tilde\ki:=\Phi_\kg\times\id_{F(Y')}(\ki')\in \Db(Y\times F(Y')),
\]
would be a family of ideal sheaves of $Y$ parametrized by $F(Y')$,
where
$$\Phi_\kg\times\id_{F(Y')}:=\Phi_{\kg\boxtimes\ko_{\Delta_{F(Y')}}}:\Db(Y'\times F(Y'))\to\Db(Y\times F(Y')).$$
By the universal property of $F(Y)$, this would yield a morphism
$F(Y)\to F(Y')$, which, as in Section \ref{subsect:isomor}, could be verified to be an
isomorphism.

Unfortunately, although $R$ is conjecturally expected to be of Fourier--Mukai type (see \cite[Conj.\ 3.7]{KuzHPD}), we are unable to prove this at the moment. Thus we are forced to build
our ``family of ideal sheaves'' $\tilde\ki$ by hand. Our strategy is to follow Orlov's
method (as reworked in \cite{CS}) of constructing the kernel of an embedding of derived categories by means of convolutions. 

\medskip

As a first step, let $L\in\Pic(F(Y'))$ be an ample line bundle and consider the (infinite) resolution of $\ki'$
\begin{equation}\label{eqn:diag}
\cdots\mor \ko_{Y'}(-N_i)^{\oplus n_i}\boxtimes (L^{-r_i})^{\oplus s_i}\mor\cdots\mor \ko_{Y'}(-N_0)^{\oplus n_0}\boxtimes(L^{-r_0})^{\oplus s_0}\mor\ki'\mor0,
\end{equation}
with the assumptions $r_i\in\NN$ and $N_i\gg0$ so that, if $p\neq 3$,
$$\Hom(\ko_{Y'},\ko_{Y'}(-N_i)[p])=\Hom(\ko_{Y'}(1),\ko_{Y'}(-N_i)[p])= 0.$$

Choose $m$ sufficiently large and truncate \eqref{eqn:diag}, getting a bounded complex
\begin{equation}\label{eqn:diag1}
O_m^\bullet:=\{\ko_{Y'}(-N_m)^{\oplus n_m}\boxtimes (L^{-r_m})^{\oplus s_m}\mor\cdots\mor \ko_{Y'}(-N_0)^{\oplus n_0}\boxtimes(L^{-r_0})^{\oplus s_0}\}.
\end{equation}
Let  $K_m:=\ker(O^m_m\to O_m^{m-1})\in\coh(Y'\times F(Y'))$; then the exact triangle
\[
K_m[m]\mor O_m^\bullet\mor\ki'
\]
splits (if $m>\dim{Y'\times F(Y')}$) . Hence $O^\bullet_m$ has a right
convolution $\ki'\oplus K_m[m]$, which is unique up to isomorphism as
the terms of  $O^\bullet_m$ are coherent sheaves (rather than general
objects in $\Db(Y'\times F(Y'))$; see \cite{Ka}, Lemma 2.1).

For the convenience of the reader, let us recall that a \emph{right convolution} of a bounded complex
\[
A_{m}\mor[d_{m}]A_{m-1}\mor[d_{m-1}]\cdots\mor[d_{1}]A_0
\]
in a triangulated category $\cat{T}$ is an object $A$ together with a morphism
$d_0\colon A_0\to A$ such that there exists a diagram in $\cat{T}$
\[\xymatrix{A_m \ar[rr]^{d_m} \ar[dr]_{\id} \ar@{}[drr]|{\circlearrowright}
& & A_{m-1} \ar[rr]^{d_{m-1}} \ar[dr] \ar@{}[drr]|{\circlearrowright} & &
\cdots \ar[rr]^{d_2} & & A_1 \ar[rr]^{d_1} \ar[dr]
\ar@{}[drr]|{\circlearrowright} & & A_0 \ar[dr]_{d_0} \\
 & A_m \ar[ru] & & C_{m-1} \ar[ll]^{[1]} \ar[ru] & & \cdots
\ar[ll]^{[1]} & & C_1 \ar[ll]^{[1]} \ar[ru] & & A, \ar[ll]^{[1]}}\]
where the triangles with a $\circlearrowright$ are
commutative and the others are distinguished. We  point the reader in the direction of
\cite{CS,Ka,Or1} for general facts about this somewhat technical device.

\medskip

Consider the complex
\begin{equation}\label{eqn:convimpo}
R_m^\bullet:=\{R(\ko_{Y'}(-N_m))^{\oplus n_m}\boxtimes (L^{-r_m})^{\oplus s_m}\mor\cdots\mor R(\ko_{Y'}(-N_0))^{\oplus n_0}\boxtimes(L^{-r_0})^{\oplus s_0}\}.
\end{equation}
of objects in $\Db(Y \times F(Y'))$.

\begin{lem}\label{lem:exconv}
The complex $R_m^\bullet$ admits a unique (up to isomorphism) split right convolution $\kg_m=\ke_m\oplus\kf_m$ such that, for some $M<m$,  $\kh^i(\ke_m)=0$ unless $i\in[-M,0]$
and $\kh^i(\kf_m)=0$ unless $i\in[-m-M,-m]$.
\end{lem}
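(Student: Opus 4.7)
The plan is to extend $R$ to a functor $\bar{R}: \Db(Y' \times F(Y')) \to \Db(Y \times F(Y'))$ by taking the external product with the identity on $F(Y')$ (equivalently, a Fourier--Mukai extension acting on the $Y'$-direction of each term of $O_m^\bullet$). The crucial point is that $\bar{R}$ is an exact functor of triangulated categories, so it sends the split convolution $\ki' \oplus K_m[m]$ of $O_m^\bullet$ produced above to a convolution of $R_m^\bullet$ whose splitting is preserved. Concretely, set $\ke_m := \bar{R}(\ki')$ and $\kf_m := \bar{R}(K_m)[m]$; then $\kg_m := \ke_m \oplus \kf_m$ is a split right convolution of $R_m^\bullet$.

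The cohomological bounds follow from a uniform amplitude estimate for $R$. Since $R = \epsilon \circ U \circ \rho'$ and $\rho'$ is the projection to $\cat{T}_{Y'}$ obtained by successive mutations through the exceptional pair $\{\ko_{Y'}, \ko_{Y'}(1)\}$, for any coherent sheaf $F$ on $Y'$ the complex $\rho'(F)$ has cohomology concentrated in a range $[-M_0,0]$ depending only on the amplitudes of $\RHom(\ko_{Y'},-)$ and $\RHom(\ko_{Y'}(1),-)$, both of which take values in $[0,3]$ because $Y'$ is a smooth threefold. The exact equivalence $U$ and the inclusion $\epsilon$ preserve this amplitude after an appropriate shift. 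Applying $\bar{R}$ to the coherent sheaf $\ki'$ on $Y' \times F(Y')$ then yields $\ke_m$ with cohomology concentrated in some interval $[-M,0]$ for $M \geq M_0$, and applying it to the sheaf $K_m$ and shifting by $m$ gives $\kf_m$ with cohomology in $[-m-M,-m]$. Choosing $m > M$ makes these two ranges disjoint.

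For uniqueness, suppose $\kg_m'$ is another split right convolution of $R_m^\bullet$ satisfying the same cohomological constraints. Since the ranges $[-M,0]$ and $[-m-M,-m]$ are disjoint for $m > M$, the truncation functors associated to the standard $t$-structure on $\Db(Y \times F(Y'))$ separate $\kg_m'$ into two pieces, one with cohomology in $[-M,0]$ and the other in $[-m-M,-m]$. Each piece is then determined up to isomorphism by the general uniqueness of convolutions of complexes in triangulated categories (see \cite{Or1, Ka, CS}), applied after verifying the requisite $\Hom$-vanishing between shifts of the terms $R_m^i$; this vanishing uses the amplitude estimate for $R$ together with the fact that $L$ is ample and the twists $L^{-r_i}$ are chosen with $r_i$ large.

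The main obstacle I expect is the careful verification of the uniform cohomological amplitude of $R$ in the relative setting over $F(Y')$, together with the precise Hom-vanishing between the twisted terms $R(\ko_{Y'}(-N_i))^{\oplus n_i}\boxtimes L^{-r_i}$ required to invoke the convolution uniqueness theorem. Both steps are essentially formal, but require attention to base-change compatibility of the Fourier--Mukai extension $\bar{R}$ and to the exact form of the hypothesis of the uniqueness statement being cited.
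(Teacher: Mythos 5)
Your proposal rests on a step that is not available: the extension of $R=\epsilon\circ U\circ\rho'$ to an exact functor $\bar R:\Db(Y'\times F(Y'))\to\Db(Y\times F(Y'))$ compatible with external products. The equivalence $U:\cat{T}_{Y'}\isomor\cat{T}_Y$ is an \emph{abstract} exact equivalence; it is not known to be of Fourier--Mukai type, so there is no canonical ``$R\boxtimes\id_{F(Y')}$''. This is precisely why the paper replaces $\ki'$ by the term-by-term complex $R_m^\bullet$, whose entries $R(\ko_{Y'}(-N_i))^{\oplus n_i}\boxtimes(L^{-r_i})^{\oplus s_i}$ only require applying $R$ to objects of $\Db(Y')$, and then has to \emph{prove} that this complex admits a convolution. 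If $\bar R$ existed with the properties you use, the entire convolution machinery (and indeed most of Section 5.2) would be unnecessary, since one could push the universal family $\ki'$ through $\bar R$ directly. So the ``crucial point'' of your argument assumes something strictly stronger than what is being established.

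The second gap is in the Hom-vanishing you defer as ``essentially formal''. Existence and uniqueness of the right convolution of $R_m^\bullet$ (via \cite[Lemmas 2.1, 2.4]{Ka}) require
$\Hom(R(\ko_{Y'}(-N_a))^{\oplus n_a}\boxtimes(L^{-r_a})^{\oplus s_a},\,R(\ko_{Y'}(-N_b))^{\oplus n_b}\boxtimes(L^{-r_b})^{\oplus s_b}[p])=0$ for $a>b$ and \emph{all} $p<0$. By K\"unneth this reduces to $\Hom^{<0}(T_N,T_P)=0$ with $T_i=(\epsilon\circ\rho)(\ko_{Y'}(-i))$. These $T_i$ are not sheaves but double mutations with cohomology spread over several degrees, so an amplitude estimate only yields vanishing for $p$ sufficiently negative (roughly $p<-3$), not for $p=-1,-2,-3$. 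Closing this range is the actual content of the paper's proof: one uses the defining mutation triangles through $\ko_{Y'}(1)$ and $\ko_{Y'}$, together with the orthogonality $\Hom(\ko_{Y'},\ko_{Y'}(-N_i)[p])=\Hom(\ko_{Y'}(1),\ko_{Y'}(-N_i)[p])=0$ for $p\neq3$ guaranteed by the choice $N_i\gg0$, to lift any negative-degree map $T_N\to T_P[-k]$ to $\widetilde T_N\to\widetilde T_P[-k]$ and show it vanishes. Neither the ampleness of $L$ nor the size of the $r_i$ plays a role here. As written, your argument neither constructs the convolution nor establishes its uniqueness.
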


\begin{proof}
Due to \cite[Lemmas 2.1, 2.4]{Ka}, $R_m^\bullet$ has a unique right convolution if
\[
\Hom(R(\ko_{Y'}(-N_a))^{\oplus n_a}\boxtimes(L^{-r_a})^{\oplus s_a}, R(\ko_{Y'}(-N_b))^{\oplus n_b}\boxtimes(L^{-r_b})^{\oplus s_b}[p])=0
\]
for $a>b$ and $p<0$.

To show this, it is enough to prove, using K\"{u}nneth decomposition and the definition of the functor $R$, that, for $N,P\geq3$ and $p<0$,
$$\Hom(T_N, T_P[p])=0,$$ where $T_i:= (\epsilon'\circ\rho')(\ko_{Y'}(-i))$, $i=N,P$. Here we denote by $\epsilon':\cat{T}_{Y'}\to\Db(Y')$ the fully faithful adjoint of $\rho'$.

To this end, we first write $T_i$ as an extension of an object in $\langle\ko_{Y'}\rangle$ and an object in $\langle\cat{T}_{Y'},\ko_{Y'}\rangle$. More precisely, using the definition of semi-orthogonal decomposition, this follows performing  the following two mutations. First consider the left mutation of $\ko_{Y'}(-i)$ with respect to $\ko_{Y'}(1)$
\begin{equation}\label{eqn:tr}
\ko_{Y'}(1)^{\oplus s_i}[-3]\to\ko_{Y'}(-i)\to\widetilde{T}_i,
\end{equation}
where $s_i:=\hom^3(\ko_{Y'}(1),\ko_{Y'}(-i))$ and $\widetilde{T}_i\in\langle\cat{T}_{Y'},\ko_{Y'}\rangle$.
Applying the functor $\Hom(\ko_{Y'},-)$ to \eqref{eqn:tr}, we have that $\Hom^p(\ko_{Y'},\widetilde{T}_i)=0$, unless $p=2,3$.
Hence, the mutation with respect to $\ko_{Y'}$ gives another triangle
\begin{equation*}
\ko_{Y'}^{\oplus t^3_i}[-3]\oplus\ko_{Y'}^{\oplus t^2_i}[-2]\to\widetilde{T}_i\to T_i,
\end{equation*}
where $t^j_i:=\hom^j(\ko_{Y'},\widetilde{T}_i)$.

Assume that there is a non-zero map $\phi:T_N\to T_P[-k]$, for $k>0$.
Consider the following diagram
\begin{equation}\label{eqn:e1}
\xymatrix{
\widetilde{T}_N\ar[r]^{\alpha}&T_N\ar[d]^{\phi}\ar[r]^{\hspace{-1.5cm}\beta}&\ko_{Y'}^{\oplus t^3_N}[-2]\oplus\ko_{Y'}^{\oplus t^2_N}[-1]\\
\widetilde{T}_P[-k]\ar[r]&T_P[-k]\ar[r]^{\hspace{-2cm}\gamma}&\ko_{Y'}^{\oplus t^3_P}[-2-k]\oplus\ko_{Y'}^{\oplus t^2_P}[-1-k].
}
\end{equation}
By \eqref{eqn:tr}, one can check that $\Hom(\widetilde{T}_N,\ko_{Y'}^{\oplus t^3_P}[-2-k]\oplus\ko_{Y'}^{\oplus t^2_P}[-1-k])=0$ and so $\gamma\circ\phi\circ\alpha=0$.
Hence, we can lift $\phi$ to a morphism $\tilde{\phi}:\widetilde{T}_N\to\widetilde{T}_P[-k]$.

Consider the diagram
\[
\xymatrix{
\ko_{Y'}(-N)\ar[r]&\widetilde{T}_N\ar[d]^{\widetilde{\phi}}\ar[r]&\ko_{Y'}(1)^{\oplus s_N}[-2]\\
\ko_{Y'}(-P)[-k]\ar[r]&\widetilde{T}_P[-k]\ar[r]&\ko_{Y'}(1)^{\oplus s_P}[-2-k].
}
\]
On one hand we have
\[
\begin{split}
\Hom(\ko_{Y'}(-N),\ko_{Y'}(-P)[-k])=\Hom(\ko_{Y'}(1)^{\oplus s_N}[-2],\ko_{Y'}(1)^{\oplus s_P}[-2-k])\\=\Hom(\ko_{Y'}(-N),\ko_{Y'}(1)^{\oplus s_P}[-2-k])=0.
\end{split}
\]
On the other hand, $\Hom(\ko_{Y'}(1)^{\oplus s_N}[-2],\widetilde{T}_P[-k])=0$ by orthogonality. This gives $\widetilde{\phi}=0$.

Back to diagram \eqref{eqn:e1}, this fact and the observation that, by orthogonality, $$\Hom(\ko_{Y'}^{\oplus t^3_N}[-2]\oplus\ko_{Y'}^{\oplus t^2_N}[-1],T_P[-k])=0$$ yield $\phi=0$.

The splitting of the convolution follows from a standard argument (see, e.g., \cite[Sect.\ 4.2]{CS}).
\end{proof}

For later use, set $\tilde\ki:=\ke_m\in\Db(Y\times F(Y'))$.

\subsection{From a bijection to an isomorphism}\label{subsect:isomor}

To prove Theorem \ref{conj:Kuzne3folds}, we only need to show that the bijection induced in Proposition \ref{prop:idealsheavesandequiv} is a morphism of algebraic varieties.
Let $s$ be a closed point in $F(Y')$ and denote by $i_s:Y\times \{s\}\hookrightarrow Y\times F(Y')$ and $i'_s:Y'\times \{s\}\hookrightarrow Y'\times F(Y')$ the natural inclusions. Let $\ki'\in\coh(Y'\times F(Y'))$ be the universal ideal sheaf of lines on $Y'$.

\begin{lem}\label{lem:exconv2}
	For any closed point $s\in F(Y')$, we have $i_s^*(\tilde\ki)\cong R((i'_s)^*\ki')$.
\end{lem}

\begin{proof}
Applying the functor $i_s^*$ to the complex $R_m^\bullet$ in \eqref{eqn:convimpo}, we get the complex
\begin{equation}\label{eqn:diag2}
i^*_s(R_m^\bullet):=\{R(\ko_{Y'}(-N_m))^{\oplus n_m}\otimes\CC^{\oplus s_m}\mor\cdots\mor R(\ko_{Y'}(-N_0))^{\oplus n_0}\otimes\CC^{\oplus s_0}\}.
\end{equation}
of objects in $\Db(Y)$. It is easy to see that the objects  $i_s^*\tilde\ki\oplus i_s^*\kf_m$ (see Lemma \ref{lem:exconv}) and $R((i'_s)^*\ki')\oplus R((i'_s)^*K_m)[m]$ are both right convolutions of $i^*_s(R_m^\bullet)$.  On the other hand, the same argument as in the proof of Lemma \ref{lem:exconv}, shows that $i^*_s(R_m^\bullet)$ has a unique (up to isomorphism) right convolution and hence, by the choice of $m\gg 0$, $i_s^*(\tilde\ki)\cong R((i'_s)^*\ki')$, for any closed point $s\in F(Y')$.
\end{proof}

While $(i'_s)^*\ki'$ is the ideal sheaf $I_{l'}$ of the line $l'\subseteq Y'$ parametrized by the point $s$, the object $i_s^*(\tilde\ki)$ is nothing but the ideal sheaf $R(I_{l'})=U(I_{l'})$ of a line $l\subseteq Y$. This construction yields a morphism $F(Y')\to F(Y)$ which, being induced by the functor $R$, is actually a bijection. Hence $F(Y')\cong F(Y)$ and the proof of Theorem \ref{conj:Kuzne3folds} is complete. Notice that here we used that $F(Y)$ is smooth as recalled at the beginning of Section \ref{subsect:convuniv}.

\section{A higher dimensional example: cubic fourfolds containing a plane}\label{sec:example4folds}

The geometric interest of the non-trivial part in the semi-orthogonal decomposition of the derived category of cubic hypersurfaces can be made apparent in dimension $4$. In this section we treat the case of (generic) cubic fourfolds containing a plane proving, by completely different means, some generalization of Theorem \ref{conj:Kuzne3folds}.

To be precise, let $Y\subseteq\PP^5$ be a smooth cubic fourfold containing a
plane $P$. The projection from $P$ yields a rational map
$\pi:Y\dasharrow\PP^2$ and the blow-up of $P$ gives a quadric
fibration $\pi':\kq\to\PP^2$ whose fibres are singular along a
sextic $C\subseteq\PP^2$.
The double cover $S$ of $\PP^2$ ramified
along such a curve is a K3 surface. The cubic fourfolds containing a plane that we will study according to \cite{Kuz3} are those satisfying the following additional hypothesis:
\begin{itemize}
\item[($\ast$)] The sextic $C$ is smooth.
\end{itemize}

As observed in \cite[Rmk.\ 2.2]{MS}, by \cite[Prop.\ 1.2]{Bea}, a
smooth cubic fourfold $Y$ satisfies condition $(\ast)$ if and only if
the fibers of $\pi'$ have at most one singular point.
Notice also that the double cover $S$ is smooth as well.

One of the key geometric properties of these varieties is:

\begin{prop}\label{prop:Voisin1} {\bf (\cite{V}, Sect.\ 1, Proposition 4).}
The cubic fourfold $Y$ is determined by the sextic $C$ and an odd theta-characteristic $\theta$, i.e., a line bundle $\theta\in\Pic(C)$ such that $\theta^{\otimes 2}\cong\omega_C$ and $h^0(C,\theta)$ is odd.
\end{prop}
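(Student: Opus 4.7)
The proposition is a reconstruction theorem, and my plan is to recover $Y$ from $(C,\theta)$ via the intermediate datum of the quadric fibration $\pi':\kq\to\PP^2$. Since $Y$ and $\kq$ determine each other (the latter is the blow-up of the former along $P$, and $Y$ is recovered by contracting the ruling of $\kq$ lying over $P$ and re-embedding via the appropriate linear system in $\PP^5$), it suffices to show that the quadric fibration is determined by the sextic $C$ and the odd theta-characteristic $\theta$.

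For the direction $Y\Rightarrow(C,\theta)$: using that $Y$ is cubic and $P\cong\PP^2$, one realizes $\widetilde{\PP^5}=\mathrm{Bl}_P\PP^5$ as $\PP(W)$ with $W=\ko_{\PP^2}^{\oplus 3}\oplus\ko_{\PP^2}(-h)$, and $\kq\subset\PP(W)$ is cut out by a symmetric form $q:\mathrm{Sym}^2 W\to\ko_{\PP^2}(h)$. In block form, $q$ becomes a $4\times 4$ symmetric matrix with entries of degree $(1,2,3)$, namely a $3\times 3$ block of linear forms, a $3\times 1$ column of quadrics, and one cubic; consequently $\det q$ is a section of $\det(W)^{-2}\otimes\ko(4h)\cong\ko(6h)$, cutting out $C$. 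The induced map $\widetilde q:W\to W^\vee\otimes\ko(h)$ drops rank by one along $C$, and $K:=\ker(\widetilde q|_C)$ is a line bundle on $C$ canonically isomorphic, by the symmetry of $q$, to the cokernel of $\widetilde q|_C$. A direct determinant computation combined with the adjunction $\omega_C\cong\ko(3h)|_C$ yields $K^{\otimes 2}\cong\omega_C^{-2}$, so $\theta:=K\otimes\omega_C$ is a theta-characteristic; the oddness of $h^0(\theta)$ is then a cohomological statement read off from hypothesis $(\ast)$ and the explicit presentation of $K$.

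For the converse direction $(C,\theta)\Rightarrow Y$: here I would invoke the classical theory of theta-characteristics on a plane curve as equivalence classes of symmetric determinantal representations, developed by Dixon, Catanese, and Beauville. The principle is that any theta-characteristic on $C$ is, up to natural $\mathrm{GL}$-equivalence, the cokernel of a symmetric morphism between direct sums of line bundles on $\PP^2$, and the parity of $h^0(\theta)$ controls the combinatorics of the presenting matrix. The odd case with $h^0(\theta)=1$ produces precisely a matrix with the $(1,2,3)$ block structure fitted to $W=\ko_{\PP^2}^{\oplus 3}\oplus\ko_{\PP^2}(-h)$, giving back the pair $(W,q)$, hence $\kq\subset\PP(W)$, and hence $Y$ by inverting the construction of the first direction.

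The main obstacle is the converse step. While Beauville's theorem treats the purely linear symmetric case cleanly, its extension to the mixed-degree symmetric case demanded here requires matching the parity of $\theta$ with the precise block decomposition $\ko^{\oplus 3}\oplus\ko(-h)$ and establishing uniqueness of $(W,q)$ modulo the appropriate automorphism group. One must also verify that the reconstructed $(W,q)$ genuinely arises from a smooth cubic fourfold containing a plane rather than from some degenerate extension of the data; this last point follows from the smoothness of $C$ postulated in $(\ast)$, which guarantees that the reconstructed quadric fibration has only the expected corank-one degenerations.
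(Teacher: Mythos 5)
This proposition is not proved in the paper at all: it is quoted verbatim from Voisin's article, and your outline is in substance Voisin's own argument (pass to the quadric fibration, encode it as a symmetric map $\widetilde q\colon W\to W^{\vee}(h)$ with $W=\ko_{\PP^2}^{\oplus 3}\oplus\ko_{\PP^2}(-h)$, extract $(C,\theta)$ from the discriminant and the corank--one locus, and invert via symmetric determinantal representations). So the strategy is the right one, but two points are genuinely wrong or missing. First, the object you call $\theta$ is not a theta-characteristic: if $K^{\otimes 2}\cong\omega_C^{-2}$ then $(K\otimes\omega_C)^{\otimes 2}\cong\ko_C$, i.e.\ your $\theta$ is a $2$-torsion class. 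The correct normalization is obtained by dualizing the presentation $0\to W\to W^{\vee}(h)\to\kf\to 0$ of $\kf:=\coker\widetilde q$ and comparing with $\mathcal{E}xt^1(\kf,\ko_{\PP^2})\cong\kf^{-1}\otimes\omega_C(3h)$: this gives $\kf^{\otimes 2}\cong\omega_C(4h)$, hence $\theta=\kf(-2h)$ is the theta-characteristic; since $K\cong\kf(-6h)$ one finds $K^{\otimes 2}\cong\ko_C(-5h)$, not $\omega_C^{-2}=\ko_C(-6h)$, and $\theta=K\otimes\ko_C(4h)$. This is not a cosmetic slip, because the twist is what computes $h^0(\theta)$: from $0\to W(-2h)\to W^{\vee}(-h)\to\theta\to 0$ and $H^0(W(-2h))=H^1(W(-2h))=0$ one gets $h^0(\theta)=h^0(W^{\vee}(-h))=1$, and it is this value (indeed the whole graded module $\bigoplus_n H^0(C,\theta(nh))$), not merely the parity, that pins down the shape $\ko(-h)^{\oplus 3}\oplus\ko$ of the symmetric resolution needed for the inverse construction.

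Second, the converse is the entire content of ``determined by'', and you leave it at an appeal to Dixon--Catanese--Beauville while yourself flagging it as the main obstacle. What must actually be proved is that, for fixed $(C,\theta)$ with $h^0(\theta)=1$, a symmetric map $W\to W^{\vee}(h)$ with cokernel $\theta(2h)$ exists and is unique up to the action of $\Aut(W)$ and scaling; this follows from the uniqueness of the minimal graded symmetric resolution of $\bigoplus_n H^0(\theta(nh))$ over the coordinate ring of $\PP^2$, together with the check (which you do not make) that smoothness of $Y$ corresponds to $C$ being smooth and the corank being everywhere one. As written, the proposal establishes (after correcting the twist) the map $Y\mapsto(C,\theta)$ but not its injectivity, which is the assertion of the proposition.
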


By \cite[Thm.\ 4.3]{Kuz3}, there exists a semi-orthogonal decomposition
\begin{eqnarray}
\Db(Y)=\langle\cat{T}_Y,\ko_Y,\ko_Y(1),\ko_Y(2)\rangle,
\end{eqnarray}
and an equivalence $\cat{T}_Y\cong\Db(S,\alpha)$, where $\alpha\in\Br(S)$ is an element in the Brauer group of the K3 surface $S$. The geometric meaning of $\alpha$ is the following. The quadric fibration $\kq$ gives a $\PP^1$-fibration $D$
over $S$ parametrizing lines contained in the fibres of $\pi'$.
The fibration $D$ is a Brauer--Severi variety and is hence
determined by the choice of an element in $\Br(S)$. Since the
fibres are projective lines, the order of $\alpha$ is $2$.

\begin{prop}\label{lem:rat}
There exist rational cubic fourfolds $Y_1$, $Y_2$ such that $\cat{T}_{Y_1}$ is not equivalent to $\cat{T}_{Y_2}$.
\end{prop}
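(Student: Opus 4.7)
\medskip

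\noindent\textbf{Proof plan.}
The plan is to exhibit two rational cubic fourfolds $Y_1,Y_2$ for which the associated K3 surfaces are non-isomorphic and have no non-trivial Fourier--Mukai partners, so that the equivalences $\cat{T}_{Y_i}\cong\Db(S_i)$ produce inequivalent categories. The natural family to use is that of Pfaffian cubic fourfolds, since they are known to be rational by the classical construction of Beauville--Donagi, and by \cite{Kuz3} for any such $Y$ one has an equivalence $\cat{T}_Y\cong\Db(S)$ where $S$ is the associated degree $14$ polarized K3 surface. This reduces the problem to a statement about K3 surfaces.

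First I would recall (or cite) the Beauville--Donagi construction producing an $18$-dimensional family of Pfaffian cubic fourfolds, all of which are rational, together with Kuznetsov's identification $\cat{T}_Y\cong\Db(S)$ for the associated degree $14$ K3 surface $S$. Next, I would observe that in this family the map $Y\mapsto S$ sweeps out a full (positive-dimensional) locus inside the moduli space of degree $14$ polarized K3 surfaces, so one can certainly pick $Y_1,Y_2$ in the family whose associated K3 surfaces $S_1,S_2$ are non-isomorphic and moreover have Picard rank one, generated by the polarization of degree $14$.

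The core of the argument is then the well-known fact, due to Mukai and Orlov, that a K3 surface of Picard rank one admits no non-trivial Fourier--Mukai partners: indeed a derived equivalence between projective K3 surfaces is induced by a Hodge isometry of Mukai lattices, and under the Picard rank one assumption any such isometry forces the surfaces themselves to be isomorphic. Applying this to $S_1$ and $S_2$, the inequivalence $\Db(S_1)\not\cong\Db(S_2)$ follows immediately from $S_1\not\cong S_2$, and transporting this through the equivalences $\cat{T}_{Y_i}\cong\Db(S_i)$ yields $\cat{T}_{Y_1}\not\cong\cat{T}_{Y_2}$, as required.

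The main obstacle is purely one of reference-keeping rather than of mathematical substance: one must verify that a generic Pfaffian cubic in the Beauville--Donagi family does have an associated K3 of Picard rank one (so that Mukai--Orlov applies without ambiguity), and that distinct generic members of the family produce non-isomorphic K3 surfaces. Both facts are standard consequences of the injectivity of the period map for polarized K3 surfaces combined with the positive-dimensionality of the image; no new computation is needed.
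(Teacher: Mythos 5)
Your route (Pfaffian cubics, Beauville--Donagi, $\cat{T}_Y\cong\Db(S)$ for the degree $14$ K3) is a legitimate alternative to the paper's, which instead works with cubic fourfolds containing a plane lying in Hassett's rational divisor and distinguishes the categories by comparing discriminants of transcendental lattices. But the step you lean on hardest is not a correct general fact: it is \emph{false} that a projective K3 surface of Picard rank one has no non-trivial Fourier--Mukai partners. By the counting formula of Hosono--Lian--Oguiso--Yau (and Oguiso's ``almost primes'' examples), a K3 surface $S$ with $\Pic(S)=\ZZ H$, $H^2=2d$, has exactly $2^{\rho(d)-1}$ Fourier--Mukai partners, where $\rho(d)$ is the number of distinct primes dividing $d$; already for $d=6$ (degree $12$) a generic such $S$ has a partner $S'\not\cong S$ with $\Db(S)\cong\Db(S')$. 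So your claim that ``a Hodge isometry of Mukai lattices under the Picard rank one assumption forces the surfaces to be isomorphic'' is wrong as a general principle, and your argument as written rests on it: non-isomorphism of $S_1$ and $S_2$ does not by itself give inequivalence of their derived categories.

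The gap is repairable precisely because you are in degree $14$: there $d=7$ is prime, so $\rho(d)=1$ and the count of partners is $1$, i.e.\ for Picard rank one degree $14$ K3 surfaces one does have $\Db(S_1)\cong\Db(S_2)\Rightarrow S_1\cong S_2$. But you must actually invoke the degree (via the counting formula, or via a direct lattice computation showing that a Hodge isometry $T(S_1)\cong T(S_2)$ extends to $H^2$ when the discriminant group is cyclic of order $2\cdot 7$), not the Picard rank alone. Note that the paper sidesteps this entire subtlety: it chooses $Y_1$, $Y_2$ so that $T(S_1)$ and $T(S_2)$ have \emph{different discriminants}, whence they cannot be Hodge isometric and Orlov's derived Torelli theorem applies immediately, with no need to decide when non-isomorphic K3s can be derived equivalent. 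If you keep the Pfaffian family, where all the associated K3s have the same degree, you cannot use that shortcut and must supply the partner-counting argument explicitly.
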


\begin{proof}
By \cite{H2}, there exists a countable union of codimension one subvarieties in the moduli space of cubic fourfolds containing a plane consisting of rational cubic fourfolds.
Moreover, these subvarieties consist of all cubic fourfolds $Y$ containing a plane such that in the N{\'e}ron-Severi group $\NS_2(Y):=H^4(Y,\ZZ)\cap H^{2,2}(Y)$ there is a class $T$ with the property that the intersection $T\cdot Q$ is odd, where $Q$ is the class of a quadric in the fibre of $\pi:Y\dasharrow\PP^2$.

Take two such cubic fourfolds $Y_1$ and $Y_2$ with the additional requirement that the lattices $L_1$ and $L_2$ which are the saturations of $\langle H_1^2,Q_1,T_1\rangle$ and $\langle H_2^2,Q_2,T_2\rangle$ have different discriminant greater than $8$ (here $H_1$ and $H_2$ are the hyperplane sections of $Y_1$ and $Y_2$) and coincide with $\NS_2(Y_1)$ and $\NS_2(Y_2)$. Recall that the discriminant of $L_i$ is just the order of the finite group $L_i\dual/L_i$.

Let us show that $Y_1$ and $Y_2$ satisfy $(\ast)$, namely that the
singular locus of the fibres of $\pi':\kq_i\to\PP^2$ is at most one
point.
Suppose, without loss of generality, that $\kq_1$ contains a fiber $Q$ which is union of two (distinct) planes $P_1$ and $P_2$. An easy computation shows that
\[
P_i\cdot P=\frac{1}{2}(Q\cdot P)=-1\qquad\mbox{and}\qquad P_1\cdot P_2=\frac{1}{2}(Q^2-P_1^2-P_2^2)=-1,
\]
for $i\in\{1,2\}$. In particular, $P$, $P_1$ and $P_2$ are distinct classes in $\NS_2(Y_1)$ and the sublattice $N$ of $\NS_2(Y_1)$ which is the saturation of the lattice $\langle H^2, P, P_1,P_2\rangle$ has rank bigger than $3$, contradicting the choice of $Y_1$. The case where a fibre degenerates to a double plane is similar and left to the reader.

Since $Y_i$ is rational, by \cite[Prop.\ 4.7]{Kuz3}, $\cat{T}_{Y_i}$ is equivalent to $\Db(S_i)$ ($i=1,2$), and hence we need to prove that $\Db(S_1)\not\cong\Db(S_2)$. A result of Orlov (\cite{Or1}) shows that this happens if and only if the transcendental lattices $T(S_1):=\Pic(S_1)^\perp$ and $T(S_2):=\Pic(S_2)^\perp$ are not Hodge isometric. By the results in \cite[Sect.\ 1]{V}, the lattice $T(S_i)$ has the same discriminant as $L_i$.  Since these discriminants are different, $T(S_1)\not\cong T(S_2)$.
\end{proof}

Recall that a cubic fourfold $Y$ containing a plane $P$ is \emph{generic} if the
group of codimension-$2$ algebraic classes $\NS_2(Y)\subseteq
H^4(Y,\ZZ)$ is generated by the class of $P$ and by $H^2$, where
$H\in H^2(Y,\ZZ)$ is the class of a hyperplane section of $Y$. By the calculations in the proof of Proposition \ref{lem:rat}, these fourfolds satisfy condition $(\ast)$.
Kuznetsov's Conjecture \ref{conj:Kuzne4folds} predicts that a generic cubic fourfold $Y$ containing a plane $P$ is
not rational since $\Db(S,\alpha)$ is not equivalent to the
derived category of any un-twisted K3 surface (see \cite[Prop.\
4.8]{Kuz3}).

The following result gives an analogue for cubic fourfolds containing a plane of the Torelli theorem for cubic threefolds proved in this paper.

\begin{prop}\label{prop:conjgen}
Given a cubic fourfold $Y$ containing a plane $P$ and satisfying $(\ast)$, there exist only finitely many isomorphism classes of cubic fourfolds $Y_1=Y,Y_2,\ldots,Y_n$ containing a plane and with the property $(\ast)$ such that $\cat{T}_Y\cong\cat{T}_{Y_j}$, with $j\in\{1,\ldots,n\}$.
Moreover, if $Y$ is generic, then $n=1$.
\end{prop}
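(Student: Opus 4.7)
The plan is to combine Kuznetsov's description $\cat{T}_Y \cong \Db(S,\alpha)$ with Voisin's reconstruction of the cubic from the pair $(C,\theta)$ recorded in Proposition \ref{prop:Voisin1}. Suppose $\cat{T}_Y \cong \cat{T}_{Y_j}$ for another cubic fourfold $Y_j$ containing a plane and satisfying $(\ast)$; then Kuznetsov's equivalence yields an exact equivalence of derived categories of twisted K3 surfaces $\Db(S,\alpha) \cong \Db(S_j,\alpha_j)$. At this point I would invoke the finiteness of twisted Fourier--Mukai partners of a twisted K3 surface (a theorem of Huybrechts--Stellari generalizing the classical untwisted statement of Bridgeland--Maciocia): only finitely many pairs $(S_j,\alpha_j)$ arise up to isomorphism.

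Next I would reconstruct $Y_j$ from $(S_j,\alpha_j)$ through finitely many choices. A K3 surface admits only finitely many primitive polarizations of degree two up to its automorphism group, and each one realizes $S_j$ as a double cover of $\PP^2$ branched along a plane sextic $C_j$. The Brauer class $\alpha_j \in \Br(S_j)$ of order two classifies the Brauer--Severi $\PP^1$-bundle $D_j \to S_j$, which is the variety of lines in the fibres of the associated quadric fibration $\kq_j \to \PP^2$; Voisin's construction then reads off an odd theta-characteristic $\theta_j$ on $C_j$. By Proposition \ref{prop:Voisin1}, the pair $(C_j,\theta_j)$ recovers $Y_j$, so only finitely many $Y_j$ occur, proving the first assertion.

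For the genericity statement assume $\NS_2(Y) = \ZZ H^2 \oplus \ZZ P$. The transcendental lattice of $\cat{T}_Y$ is a derived invariant and, under Kuznetsov's equivalence, coincides both with the twisted transcendental lattice of $(S,\alpha)$ and with the orthogonal of $\langle H^2,P\rangle$ inside the primitive part of $H^4(Y,\ZZ)$. Genericity forces the latter to have Picard number zero as a sub-Hodge structure, whence $\Pic(S)$ has rank one; thus $S$ carries a unique degree-two polarization and hence a unique plane sextic $C$, and by Huybrechts--Stellari $(S,\alpha)$ is its own unique twisted Fourier--Mukai partner. The main obstacle is then to verify that in this Picard-rank-one setting the odd theta-characteristic is uniquely determined by $(S,\alpha)$: one must trace the Clifford-algebra construction underlying Kuznetsov's equivalence to show that the correspondence $\alpha \mapsto \theta$ is injective in the generic case, forcing $Y_j = Y$ and hence $n = 1$.
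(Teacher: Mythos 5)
Your argument for the finiteness statement is essentially sound, though it takes a different route from the paper: you combine finiteness of twisted Fourier--Mukai partners with finiteness of degree-two polarizations up to $\Aut(S_j)$ and the (automatic) finiteness of odd theta-characteristics on each sextic, then reconstruct via Proposition \ref{prop:Voisin1}. The paper instead feeds the Hodge isometry $T(S,\alpha)\cong T(S_j,\alpha_j)$ of \cite[Thm.\ 0.4]{HS1} into the Torelli theorem for cubic fourfolds, reducing to the finiteness of primitive sublattices of $L=\langle H^2,P\rangle^\perp$ isometric to $T(S,\alpha)(-1)$ up to isometries of $L$ fixing $H^2$ (the counting being the one in the proof of \cite[Cor.\ 4.6]{HS1}). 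Your version is more geometric and avoids invoking the fourfold Torelli theorem for this half, at the price of relying on Sterk-type finiteness of polarizations; both are acceptable.

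The genericity statement, however, is where your proposal has a genuine gap, and you name it yourself without closing it: you must show that $(S,\alpha)$ determines the odd theta-characteristic $\theta$, i.e.\ that the correspondence between Brauer data and theta-characteristics is injective in the generic case. This is not a formal consequence of ``tracing the Clifford-algebra construction''; it is precisely the content of \cite[Sect.\ 2]{V}, which identifies the affine space (over $\ZZ/2\ZZ$) of theta-characteristics on $C$ with the set of extension classes
\[
0\to L(-1)\to T(S)\to\ZZ/2\ZZ\to 0,
\]
the surjection being cup product with a $B$-field lift $B$ of $\alpha$. To exploit this one needs more than an abstract equivalence $\Db(S_1,\alpha_1)\cong\Db(S_2,\alpha_2)$: one must upgrade the induced Hodge isometry $T(S_1,\alpha_1)\cong T(S_2,\alpha_2)$ to a Hodge isometry $f:T(S_1)\to T(S_2)$ commuting with the maps $\wedge B_i$, extend $f$ to $H^2$ using that $\Pic(S_i)\cong\ZZ$ is generated by a degree-two class, and realize it (after possibly replacing $B_2$ by $-B_2$, harmless since $\alpha_2$ has order two) as $\varphi_*$ for an isomorphism $\varphi:S_1\isomor S_2$ with $\varphi^*\alpha_2=\alpha_1$ respecting the two extensions. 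Only then do the sextics and the theta-characteristics match, and Proposition \ref{prop:Voisin1} gives $Y_1\cong Y_2$. Relatedly, your assertion that a Picard-rank-one $(S,\alpha)$ ``is its own unique twisted Fourier--Mukai partner'' is itself the output of this extension argument rather than an off-the-shelf citation, so as written the second half of your proof is circular at exactly the point you flag as the main obstacle.
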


\begin{proof}
Let $Y'$ be a cubic fourfold such that $\cat{T}_{Y'}\cong\cat{T}_{Y}$ and containing a plane $P'$ giving an equivalence $\cat{T}_{Y'}\cong\Db(S',\alpha')$ and hence an equivalence
$\Phi:\Db(S,\alpha)\isomor\Db(S',\alpha')$. By \cite[Thm.\
0.4]{HS1}, $\Phi$ induces a Hodge isometry
$\Phi^*:T(S,\alpha)\isomor T(S',\alpha')$, where $T(S,\alpha)$ and $T(S',\alpha')$ are the generalized
transcendental lattices. To be precise, these lattices depend on the choice of $B$-field lifts $B$ and $B'$ of the Brauer classes $\alpha$ and $\alpha'$.

By \cite[Sect.\ 1]{V} (see, in particular, \cite[Sect.\ 1, Prop.\ 3
]{V}), the weight-$2$ Hodge structure on $T(S,\alpha)$ determines the Hodge structure on $H^4(Y,\ZZ)$, since $T(S,\alpha)(-1)$ is realized as a primitive sublattice of the orthogonal complement $L:=\langle H^2,P\rangle^\perp$ in $H^4(Y,\ZZ)$. Again $H$ and $P$ are respectively the hyperplane section of $Y$ and the plane contained in $Y$. (Recall that, given a lattice $L$ with quadratic form $b_L$, the lattice $L(-1)$ coincides with $L$ as a group but its quadratic form $b_{L(-1)}$ is such that $b_{L(-1)}=-b_L$.)

This means that, by the Torelli theorem for cubic fourfolds (see \cite{V}), we are reduced to proving that there are only finitely many primitive sublattices $T$ of the sublattice $L:=\langle H^2,P\rangle^\perp$ in $H^4(Y,\ZZ)$ with any isometry $\varphi:T(S,\alpha)(-1)\isomor T$ which does not extend to an isometry $\overline{\varphi}:L\isomor L$, fixing the class $H^2$. But this is a standard result which can be found, for example, in the proof of \cite[Cor.\ 4.6]{HS1}.

For the second part of the statement, assume there exists an equivalence
$\Phi:\Db(S_1,\alpha_1)\isomor\Db(S_2,\alpha_2)$ inducing, as before, a Hodge isometry
$\Phi^*:T(S_1,\alpha_1)\isomor T(S_2,\alpha_2)$. Take a B-field lift $B_i$ for $\alpha_i$. (See \cite{HS1} for more details.)

We want to show that there is a Hodge isometry $f:T(S_1)\to
T(S_2)$ making the following diagram commutative:
\begin{eqnarray}\label{eq:dia}
    \xymatrix{
    0\ar[r]&T(S_1,\alpha_1)\ar[d]_{\Phi^*}\ar[r]^{\;\;\;i_1}&T(S_1)\ar@{.>}[d]_{f}\ar[r]^{\wedge B_1}&\ZZ/2\ZZ\ar[r]\ar@{=}[d]&0\\
    0\ar[r]&T(S_2,\alpha_2)\ar[r]^{\;\;\;i_2}&T(S_2)\ar[r]^{\wedge B_2}&\ZZ/2\ZZ\ar[r]&0.
    }
\end{eqnarray}
First of all, observe that, up to considering the composition
$i_1':=i_2\circ\Phi^*:T(S_1,\alpha_1)\hookrightarrow T(S_2)$, we can
assume $T(S_2,\alpha_2)=T(S_1,\alpha_1)$. Thus, let $\tau\in
T(S_1,\alpha_1)\otimes\QQ$ be such that $i_1(\tau)$ generates $T(S_1)$
modulo $T(S_1,\alpha_1)$. Obviously, $\tau':=2\tau\in i_1(T(S_1,\alpha_1))$.
Define $f$ by
$$f(i_1(\tau)):=\frac{1}{2}(i_1'(i_1^{-1}(2\tau)))\qquad\mbox{and}\qquad f(t):=i_2(\Phi^*(i_1^{-1}(t))),$$ for any $t\in i_1(T(S_1,\alpha_1))$.

The morphism $f$ is obviously an isometry, since the $\QQ$-linear
extension of $i_1'\circ i_1^{-1}$ is. For the same reason, $f$
preserves the weight-$2$ Hodge structure on $T(S_1)$ and $T(S_2)$.

Since $S_1$ and $S_2$ are generic K3 surfaces with $\Pic(S_i)\cong\ZZ$
generated by an element with self-intersection $2$, the isometry
$f$ extends to a Hodge isometry $$f':H^2(S_1,\ZZ)\isomor
H^2(S_2,\ZZ).$$ Up to composing $f'$ with $-\id$ and changing
$B_2$ with $-B_2$, there exists an isomorphism $\varphi:S_1\isomor
S_2$ such that $f'=\varphi_*$. (Notice that changing $B_2$ with
$-B_2$ is no problem since $\exp(B_2)=\exp(-B_2)=\alpha_2$ as
$\alpha_2$ has order $2$.) In other words, by \eqref{eq:dia},
$\varphi^*(\alpha_2)=\alpha_1$.

For $i\in\{1,2\}$, define, as before, $L_i:=\langle
H_i^2,P_i\rangle^{\perp_{H^4(Y_i,\ZZ)}}$, where $H_i$ and $P_i$
are, respectively, the classes of an hyperplane section of $Y_i$
and of the plane contained in $Y_i$. Again, by the discussion in
\cite[Sect.\ 1]{V} (see, in particular, \cite[Sect.\ 1, Prop.\ 3]{V}), there exists a short exact sequence
\begin{eqnarray}\label{eq:ext}
    \xymatrix{
    0\ar[r]&L_i(-1)\ar[r]&T(S_i)\ar[r]^{\wedge B_i}&\ZZ/2\ZZ\ar[r]&0.
    }
\end{eqnarray}

In \cite[Sect.\ 2]{V}, it is shown that, given $Y_i$ and $S_i$,
there exists a natural isomorphism between the affine space (over
$\ZZ/2\ZZ)$ of the theta-characteristics on the sextic $C_i$ along
which the double cover $S_i$ of $\PP^2$ ramifies and the extension
classes as in \eqref{eq:ext}. In particular, the isomorphism
$\varphi:S_1\isomor S_2$ leads to isomorphic sextics and
theta-characteristics. Applying Proposition \ref{prop:Voisin1}, we
get the desired isomorphism $Y_1\cong Y_2$.
\end{proof}

\begin{remark}
For non-generic cubic fourfolds containing a plane one cannot expect that the derived category $\Db(S,\alpha)$ determines the fourfold $Y$ up to isomorphism. Indeed, using the properties of the moduli space of cubic fourfolds in \cite{H1}, it is possible to construct examples of fourfolds $Y_1$ and $Y_2$ with a Hodge isometry $T(S_1,\alpha_1)\cong T(S_2,\alpha_2)$ but such that $L_1:=\langle H_1^2,P_1\rangle^\perp$ and $L_2:=\langle H_2^2,P_2\rangle^\perp$ are not isometric.
\end{remark}

\medskip

{\small\noindent{\bf Acknowledgements.}
It is a great pleasure for the authors to thank Alexander Kuznetsov for stating the problem, giving them ideas how to solve it, kindly answering their questions, and proposing several simplifications in the original arguments. They are also grateful to Daniel Huybrehcts for his comments on a first draft of the paper.
E.~M.~ wants to thank Arend Bayer for numerous discussions, comments, and explanations while M.~B.~ is grateful to Arnaud Beauville for pointing out this problem. P.~S.~ was partially supported by the Istituto Nazionale di Alta Matematica while M.~B.~ was supported by the SFB/TR 45.
E.~M.~ is partially supported by the NSF grants DMS-1001482 and DMS-1160466, the Hausdorff Center for Mathematics, Bonn, and SFB/TR 45.
The authors thank the Department of Mathematics of the University of Utah for the hospitality during the preparation of this paper.}


\end{document}